\providecommand{\U}[1]{\protect\rule{.1in}{.1in}}
\newtheorem{theorem}{Theorem}[section]
\newtheorem{proposition}[theorem]{Proposition}
\newtheorem{proposition/definition}[theorem]{Proposition/Definition}
\newtheorem{lemma}[theorem]{Lemma}
\newtheorem{corollary}[theorem]{Corollary}
\newtheorem{claim}[theorem]{Claim}
\theoremstyle{definition}
\newtheorem{definition}[theorem]{Definition}
\newtheorem{example}[theorem]{Example}
\newtheorem{question}[theorem]{Question}
\theoremstyle{remark}
\newtheorem{remark}[theorem]{Remark}
\begin{document}

\title{New Classes of Matrix Decompositions}

\author{Ke~Ye}
\thanks{KY is partially supported by AFOSR FA9550-13-1-0133, DARPA D15AP00109, NSF IIS 1546413, DMS 1209136, and DMS 1057064.}
\address{Department of Statistics, University of Chicago, Chicago, IL 60637-1514.}
\email{kye@galton.uchicago.edu}

\begin{abstract}
The idea of decomposing a matrix into a product of structured matrices such as triangular, orthogonal, diagonal matrices is a milestone of numerical computations. In this paper, we describe six new classes of matrix decompositions, extending our work in \cite{yelim}. We prove that every $n\times n$ matrix is a product of finitely many bidiagonal, skew symmetric (when n is even), companion matrices and generalized Vandermonde matrices, respectively. We also prove that a generic $n\times n$ centrosymmetric matrix is a product of finitely many symmetric Toeplitz (resp. persymmetric Hankel) matrices. We determine an upper bound of the number of structured matrices needed to decompose a matrix for each case.
\end{abstract}
\maketitle

\section{Introduction}
Matrix decomposition is an important technique in numerical computations. For example, we have classical matrix decompositions:
\begin{enumerate}
\item LU: a generic matrix can be decomposed as a product of an upper triangular matrix and a lower triangular matrix. 
\item QR: every matrix can be decomposed as a product of an orthogonal matrix and an upper triangular matrix.
\item SVD: every matrix can be decomposed as a product of two orthogonal matrices and a diagonal matrix.
\end{enumerate} 
These matrix decompositions play a central role in engineering and scientific problems related to matrix computations \cite{10},\cite{Decomp}. For example, to solve a linear system 
\[
A x = b
\]
where $A$ is an $n\times n$ matrix and $b$ is a column vector of length $n$. We can first apply LU decomposition to $A$ to obtain 
\[
LU x =b,
\] 
where $L$ is a lower triangular matrix and $U$ is an upper triangular matrix. Next, we can solve 
\begin{align*}
Ly &= b, \\
Ux & =y.
\end{align*}
to obtain the solution of the original linear equation. The advantage of decomposing $A$ into the product of $L$ and $U$ first is that solving linear equations with triangular matrix coefficient is much easier than solving the one with general matrix coefficient. Similar idea applies to QR and SVD decompositions. 

Those classical matrix decompositions (LU, QR and SVD decompositions) correspond to Bruhat, Iwasawa, and Cartan decompositions of Lie groups \cite{Knapp, Borel}. Other than those classical ones, there are other matrix decompositions. For instance, 
\begin{enumerate}
\item Every $n\times n$ matrix is a product of $(2n+5)$ Toeplitz (resp. Hankel) matrices \cite{yelim}.
\item Every matrix is a product of two symmetric matrices \cite{Bosch}.
\end{enumerate}
As we have seen for classical matrix decompositions, Toeplitz, Hankel and symmetric matrix decompositions are important in the sense that structured matrices are well understood. For example, a Toeplitz linear system can be solved in $O(n\log n)$ using displacement rank \cite{BA}, compared to at least $O(n^2)$ for general linear systems. Sometimes the matrix decomposition refers to the decomposition of a matrix into the sum of two matrices, see for example, \cite{ref1,ref2,ref3}. However, whenever we mention the matrix decomposition in this paper, we always refer to the multiplicative version.

In this article, we study matrix decompositions beyond those mentioned above. We use Algebraic Geometry as our tool to explore the existence of matrix decompositions for various structured matrices. We define necessary notions in Section \ref{sec:algebraic geometry} and we prove some general results for the matrix decomposition problem and establish a strategy to tackle the matrix decomposition problem in Section \ref{sec:general method}. In Section \ref{sec:toy example} we discuss the matrix decomposition problem with two factors and recover the LU decomposition and the QR decomposition for generic matrices using our method. Here the LU (resp. QR) decomposition for generic matrices means that the set of matrices which can be written as the product of a lower triangular (resp. orthogonal) matrix and an upper triangular matrix is a dense subset (with the Zariski topology) of the space of all $n\times n$ matrices.

In Section \ref{sec:linear spaces} we apply the strategy built in Section \ref{subsec:strategy} to matrix decomposition problem for linear subspaces. Lastly, in Section \ref{sec:nonlinear} we apply the strategy to matrix decomposition problem for non-linear varieties. We summarize our contributions in the following list:
\begin{enumerate}
\item Bidiagonal decomposition (Section \ref{subsec:bidiagonal}).
\item Skew symmetric decomposition (Section \ref{subsec:skew symmetric}).
\item Symmetric Toeplitz decomposition and persymmetric Hankel decomposition (Section \ref{subsec:symmetric Toeplitz})
\item Generic decomposition (Section \ref{subsec:generic}).
\item Companion decomposition (Section \ref{subsec:companion}).
\item Generalized Vandermonde decomposition (Section \ref{subsec:generalized Vandermonde}).
\end{enumerate}
For each type of matrices in the list above, we first prove the existence of the matrix decomposition for a generic matrix, in the sense that the set of all matrices which can not be decomposed as a product of matrices of the given type, is contained in a proper algebraic subvariety of $\mathbb{C}^{n\times n}$. Then we use a result from topological group theory to prove the existence of the matrix decomposition for every matrix. The price we need to pay is to increase the number of factors. Our method can only show the existence of the matrix decomposition and no algorithm can be obtained in general. However, we do find an algorithm for companion decomposition in \ref{subsec:companion}.

\section{Algebraic geometry}\label{sec:algebraic geometry}
In this section, we introduce necessary notions in Algebraic Geometry needed in this paper. We work over complex numbers but all results hold over algebraically closed fields of characteristic zero. Every notion we define in this section can be generalized to a more abstract version, but we only concentrate on a simplified version. Main references for this section are \cite{Taylor,shafarevich,Harris,GH,Hartshorne}

Let $\mathbb{C}[x_1,\dots,x_n]$ be the polynomial ring of $n$ variables over $\mathbb{C}$. We say that a subset $X\subset \mathbb{C}^n$ is an \textit{algebraic subvariety} if $X$ is the zero set of finitely many polynomials $F_1,\dots,F_r\in \mathbb{C}[x_1,\dots,x_n]$ and we say that $X$ is defined by $F_1,\dots, F_r$. For example, any linear subspace of $\mathbb{C}^n$ is an algebraic subvariety because they are all defined by polynomials of degree one. Less nontrivial examples are algebraic groups such as $\operatorname{GL}_n(\mathbb{C})$, the group of all $n\times n$ invertible matrices and $O(n)$, the group of all $n\times n$ orthogonal matrices. We remark here that $\operatorname{GL}_n(\mathbb{C})$ is an algebraic subvariety of $\mathbb{C}^{n^2+1}$ defined by 
\[
F(x_{ij},t) = t\operatorname{det}(x_{ij}) - 1\in \mathbb{C}[x_{ij},t],
\]
where $t,x_{ij},1\le i,j\le n$ are variables and $\operatorname{det}(x_{ij})$ is the determinant of the $n\times n$ matrix $(x_{ij})$. Also $O(n)$ is an algebraic subvariety of $\mathbb{C}^{n^2}$ because $O(n)$ is the defined by 
\[
(x_{ij})^{\mathsf{T}} (x_{ij}) - 1\in \mathbb{C}[x_{ij}],
\]
where $x_{ij},1\le i,j\le n$ are variables. 

Let $X$ be an irreducible algebraic subvariety of $\mathbb{C}^{n}$. We say that $X$ is \textit{irreducible} if $X$ cannot be written as the union of two algebraic subvarieties properly contained in $X$. In other words, whenever $X = X_1\cup X_2$ for algebraic subvarieties $X_1$ and $X_2$, we have 
\[
X_1 = X ~\text{or}~X_2 = X.
\]
It is clear that linear subspaces, $\operatorname{GL}_n(\mathbb{C})$ and $O(n)$ are all irreducible. The algebraic subvariety $X$ defined by the equation $x_1x_2 = 0$ is not irreducible since $X$ is the union of $X_i$ which is defined by the equation $x_i = 0,i=1,2$.

Let $X\subset \mathbb{C}^n$ be an algebraic subvariety. Let $I(X)$ be the ideal consisting of all polynomials $f\in \mathbb{C}[x_1,\dots,x_n]$ such that $f(x) = 0$ for any $x\in X$. It is well known \cite{shafarevich} that the ideal $I(X)$ must be finitely generated. Assume that $I(X)$ is generated by polynomials $F_1,\dots,F_r$. Let $J_p$ be the Jacobian matrix
\[
J_p = \begin{bmatrix}
\frac{\partial F_i}{\partial x_j}(p)
\end{bmatrix},
\]
where $1\le i\le r$ and $1\le j \le n$. We define the \textit{dimension} of $X$ to be
\[
\dim X \coloneqq \min_{p\in X} \{\dim \operatorname{ker}(J_p)\}. 
\]
The notion of dimension coincides with the intuition. For example, the dimension of a linear subspace of $\mathbb{C}^n$ is the same as its dimension as a linear space. The dimension of $\operatorname{GL}_n(\mathbb{C})$ is $n^2$ and the dimension of $O(n)$ is $\binom{n}{2}$. If $p\in X$ is a point such that 
\[
\dim X =  \operatorname{ker}(J_p),
\]
we say that $p$ is a \textit{smooth point} of $X$ and we define the \textit{tangent space} $T_pX$ of $X$ at $p$ to be 
\[
T_pX \coloneqq \operatorname{ker}(J_p).
\]
For example, the tangent space $T_p X$ of a linear subspace $X\subset \mathbb{C}^n$ at a point $p\in X$ can be identified with $X$ itself. The tangent space of $O(n)$ at the identity $e$ is simply the Lie algebra $\mathfrak{o}(n)$, the Lie algebra consisting of all $n\times n$ skew symmetric matrices \cite{FH}.

Let $U\subset \mathbb{C}^{n}$ be a subset. We define the \textit{Zariski closure} $\overline{U}$ of $U$ to be the common zero set of polynomials vanishing on $U$. Namely, 
\[
\overline{U} = \{x\in \mathbb{C}^n: f(x) = 0,f\in I(U) \}.
\]
For example, the Zariski closure of $\mathbb{R}$ in $\mathbb{C}$ is the whose space $\mathbb{C}$. We remark that the Zariski closure could be much larger than the Euclidean closure. For example, the Euclidean closure of $\mathbb{R}$ in $\mathbb{C}$ is just itself.

Let $X\subset \mathbb{C}^n$ and $Y\subset \mathbb{C}^m$ be two algebraic subvarieties. We say a map $f: X\to Y$ is a \textit{polynomial map} if $f$ can be represented as 
\[
f(x_1,\dots, x_n) = (f_1(x_1,\dots, x_n),\dots, f_m(x_1,\dots, x_n)),
\] 
where $f_1,\dots, f_m$ are polynomials in $n$ variables. For example, we denote $\mathbb{C}^{n\times n}$ by the space of all $n\times n$ matrices. It is clear that $\mathbb{C}^{n\times n} \cong \mathbb{C}^{n^2}$. Then the map 
\[
\rho_r: \underset{r \text{ copies }} {\underbrace{ \mathbb{C}^{n\times n}  \times \cdots \times \mathbb{C}^{n\times n} }} \to \mathbb{C}^{n\times n}
\]
defined by $\rho_r(A_1,\dots, A_r) = A_1\cdots A_r$ is a polynomial map. If $W_1,\dots, W_r$ are algebraic subvarieties of $\mathbb{C}^{n\times n}$, then the restriction of $\rho_r$ is, by definition, also a polynomial map.

We denote the set of all $k$ dimensional linear subspaces of $\mathbb{C}^n$ by $\operatorname{Gr}(k,n)$ and call it the \textit{Grassmannian of $k$ planes in $\mathbb{C}^n$}. In particular, when $k=1$, we have $\operatorname{Gr}(1,n) = \mathbb{P}^{n-1}$, the projective space. We say that a subset $X$ of $\mathbb{P}^{n-1}$ is a \textit{projective subvariety} if $X$ is defined by homogeneous polynomials, i.e., there are  homogeneous polynomials $F_1,\dots, F_r\in \mathbb{C}[x_1,\dots,x_n]$ such that
\[
X =\{[p]\in \mathbb{P}^{n-1}: F_1(p) = \cdots = F_r(p) = 0\}.
\]
Here $[p]$ is the element in $\mathbb{P}^{n-1}$, which corresponds to the line joining the origin and $p\in \mathbb{C}^n$. It is a fundamental fact \cite{shafarevich,Weyman,Harris,GH} that $\operatorname{Gr}(k,n)$ is a projective subvariety in $\mathbb{P}^{N-1}$ where $N = \binom{n}{k}$. Furthermore, $\operatorname{Gr}(k,n)$ is smooth, i.e., every point in $\operatorname{Gr}(k,n)$ is a smooth point, hence we can define the \textit{tangent bundle} $T\operatorname{Gr}(k,n)$, whose fiber over a point $[W]\in \operatorname{Gr}(k,n)$ is simply the tangent space $T_{[W]} \operatorname{Gr}(k,n)$. We define 
\[
E \coloneqq \{([W],w): [W]\in \operatorname{Gr}(k,n), w\in W\}
\] 
and a projection map $\pi:E \to  \operatorname{Gr}(k,n)$
\[
\pi([W],w) = [W].
\]
It turns out that $(E,\pi)$ is a vector bundle on $\operatorname{Gr}(k,n)$ \cite{shafarevich,Weyman,Harris,GH}. We say that $E$ is the \textit{tautological vector bundle on $\operatorname{Gr}(k,n)$}. By definition, the fiber $\pi^{-1}([W])$ over a point $[W]\in \operatorname{Gr}(k,n)$ is simply the vector space $W$.

Let $X$ be an algebraic subvariety in $\mathbb{C}^n$. Let $P$ be a property defined for points in $X$. We say that $P$ is a \textit{generic property} if there exists an algebraic subvariety $X_P\subsetneq X$ such that if $x\in X$ does not satisfy the property $P$, then $x\in X_P$. If the property $P$ is understood, we say that $x\in X$ is a \textit{generic point} if $x$ satisfies the property $P$. For example, we can say that for a fixed hyperplane $H$ in $\mathbb{C}^n$, a generic point in $x\in \mathbb{C}^n$ is not contained in $H$. A generic $n\times n$ matrix is invertible since matrices that are not invertible are defined by the vanishing of their determinants. We also say that for a fixed $m$ plane $L$, a generic $k$ plane intersects with $L$ in a $m+k-n$ dimensional subspace. If $x\in \mathbb{C}^n$ is a generic point for property $P$, then by definition, the set of points in $\mathbb{C}^n$ that does not satisfy $P$ is contained in an algebraic subvariety $X_P \subsetneq \mathbb{C}^n$. If we equip $\mathbb{C}^n$ with the Lebesgue measure, then it is clear that $X_P$ always has measure zero. In other words, the set of generic points has the full measure. 

In particular, when we say that a generic $n \times  n$ matrix can be decomposed into the product of finitely many structured matrices, for example, Toeplitz matrices, we mean the set of all matrices which admit such a decomposition is an open dense subset (with the Zariski topology) of the space of all $n\times n$ matrices. For those who are not familiar with the notion of generic objects, one can replace ``generic" by ``random" to obtain the intuition, though this is not rigorous in mathematics.

\section{General method}\label{sec:general method}
Let $X$ be an algebraic subvariety of $\mathbb{C}^{n\times n}$ which is closed under matrix multiplication, i.e., for any $A,B\in X$ we have $AB\in X$. Let $r$ be a positive integer and let $W_1,\dots, W_r$ be subvarieties of $X$. We consider a map 
\[
\rho_r: W^r \coloneqq W_1\times \cdots  \times W_r\to X
\]
defined by the matrix multiplication
\[
\rho_r(A_1,\dots,A_r) = A_1\cdots A_r.
\]
We can rephrase the matrix decomposition problem in terms of $\rho$. 
\begin{question}\label{question:main}~
\begin{enumerate}
\item Does there exist $r$ such that $\rho_r$ is a dominant map, i.e., $\overline{\rho_r(W^r)} = X$?
\item Does there exist $r$ such that $\rho_r$ is a surjective map, i.e., $\rho_r(W^r) = X$?
\end{enumerate}
\end{question}
Here the closure $\overline{\rho(W^r)}$ is the Zariski closure of $\rho(W^r)$ in $X$. In general, the fist question in Question \ref{question:main} is weaker than the second. However, we will see later that with some assumptions on $X$ and $W$, we can conclude that if $\rho_r$ is dominant, $\rho_{r'}$ is surjective for some $r'\ge r$. 

\subsection{Lower bound of $r$}
First, we can do a naive dimension counting to get a lower bound on $r$. To do this we need 
\begin{proposition}\label{prop:dimension counting}\cite{shafarevich}
If $f:Y \to Z$ is a dominant polynomial map between two irreducible algebraic varieties, then $\dim Y = \dim Z + \dim f^{-1}(z)$ for a generic point $z\in Z$. In particular, $\dim Y \ge \dim Z$.
\end{proposition}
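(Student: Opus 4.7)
The plan is to translate the problem into field theory and exploit the interpretation of dimension as transcendence degree of the function field. Since dimension is invariant under passing to a dense open subset of an irreducible variety, I would first cover $Y$ and $Z$ by principal affine opens and reduce to the case where $Y$ and $Z$ are affine irreducible varieties, with coordinate rings $\mathbb{C}[Y]$ and $\mathbb{C}[Z]$ that are integral domains.

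Next, dominance of $f$ corresponds to injectivity of the pullback $f^{\ast}:\mathbb{C}[Z]\hookrightarrow \mathbb{C}[Y]$, so $f^{\ast}$ extends to an inclusion of function fields $\mathbb{C}(Z)\hookrightarrow \mathbb{C}(Y)$. Using $\dim Y=\operatorname{tr.deg}_{\mathbb{C}}\mathbb{C}(Y)$ and the analogous identity for $Z$, additivity of transcendence degrees gives $\operatorname{tr.deg}_{\mathbb{C}(Z)}\mathbb{C}(Y)=\dim Y-\dim Z$; call this number $r$. I would then apply Noether normalization to the extension $\mathbb{C}[Z]\hookrightarrow \mathbb{C}[Y]$: after inverting a suitable nonzero $g\in\mathbb{C}[Z]$, one can choose algebraically independent elements $t_{1},\ldots,t_{r}\in\mathbb{C}[Y][1/f^{\ast}g]$ so that $\mathbb{C}[Y][1/f^{\ast}g]$ is a finite module over $\mathbb{C}[Z][1/g][t_{1},\ldots,t_{r}]$. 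Geometrically, on the open locus $D(f^{\ast}g)$ this factors $f$ as a finite dominant morphism to $D(g)\times\mathbb{A}^{r}$ followed by the projection to $D(g)$.

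For any $z\in D(g)$, the fiber of the projection is $\{z\}\times\mathbb{A}^{r}$ of dimension $r$, and the fiber of the finite map over any point has dimension $0$; combining these gives $\dim f^{-1}(z)=r$ on the dense open $D(g)\subseteq Z$, which is the generic statement.

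The main obstacle is justifying both halves of the fiber-dimension equality. The inequality $\dim f^{-1}(z)\le r$ is clean from the finite-after-projection factorization, but the reverse inequality $\dim f^{-1}(z)\ge r$ requires knowing that the finite map is surjective onto the generic fiber of the projection, which in turn uses the going-up theorem (equivalently, finite extensions of integral domains induce surjective morphisms of spectra). The final assertion $\dim Y\ge \dim Z$ follows immediately from the inclusion $\mathbb{C}(Z)\hookrightarrow\mathbb{C}(Y)$ alone, so the subtle going-up step is only needed for the exact formula, not for the consequence most often used in the sequel.
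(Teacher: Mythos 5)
The paper does not actually prove this proposition; it is quoted as a standard theorem from Shafarevich (the theorem on the dimension of fibers of a dominant morphism), so there is no in-text proof to compare against. That said, your sketch is a correct account of the canonical argument: reduce to the irreducible affine case, recognize dominance as injectivity of $f^\ast$ on coordinate rings, use the transcendence-degree characterization of dimension to set $r=\dim Y-\dim Z$, and then apply a relative Noether normalization after inverting a suitable $g\in\mathbb{C}[Z]$ to factor $f|_{D(f^\ast g)}$ as a finite dominant map onto $D(g)\times\mathbb{A}^r$ followed by the projection to $D(g)$. You also correctly isolate the only subtle step: the upper bound $\dim f^{-1}(z)\le r$ is immediate from finiteness, while the lower bound needs surjectivity of the finite stage, which is exactly lying-over/going-up for an injective integral extension of a domain. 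One small point worth making explicit when writing this up: for $z\in D(g)$ one has $f^{-1}(z)\subseteq D(f^\ast g)$ because $D(f^\ast g)=f^{-1}(D(g))$, so the dimension you compute on the localized piece really is $\dim f^{-1}(z)$ and not the dimension of some proper open subset of the fiber. The final remark, that $\dim Y\ge\dim Z$ follows directly from the field inclusion $\mathbb{C}(Z)\hookrightarrow\mathbb{C}(Y)$ alone, is also correct and is the only part of the statement the paper actually uses in its dimension-counting corollaries.
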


Apply Proposition \ref{prop:dimension counting} to our case we obtain
\begin{corollary}\label{cor:dimension counting 1}
If $\rho_r: W^r \to X$ is dominant, then 
\begin{equation}\label{eqn:dimension counting}
\sum_{i=1}^r\dim W_i \ge \dim X.
\end{equation}
\end{corollary}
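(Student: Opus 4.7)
The plan is to reduce the corollary to a direct application of Proposition \ref{prop:dimension counting}. The ingredients I need are: (i) $W^r = W_1 \times \cdots \times W_r$ is itself an algebraic subvariety of $(\mathbb{C}^{n\times n})^r$, (ii) $\rho_r$ is a polynomial map, which was already observed in Section \ref{sec:algebraic geometry}, and (iii) the standard dimension formula $\dim(W_1\times \cdots \times W_r) = \sum_{i=1}^r \dim W_i$ for products of varieties.

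First I would treat the case in which every $W_i$ and $X$ is irreducible. This is the situation occurring in all later applications, where the $W_i$'s are familiar structured-matrix varieties and $X$ is typically $\mathbb{C}^{n\times n}$ or some linear subspace of it. Under this assumption, $W^r$ is irreducible and $\rho_r : W^r \to X$ is a dominant polynomial map between irreducible algebraic varieties, so Proposition \ref{prop:dimension counting} applies verbatim and gives
\[
\sum_{i=1}^r \dim W_i \;=\; \dim W^r \;\ge\; \dim X,
\]
which is exactly \eqref{eqn:dimension counting}.

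To cover the case in which some $W_i$ is reducible, I would decompose each $W_i = \bigcup_j W_{i,j}$ into its irreducible components. Then $W^r$ is the finite union of the irreducible products $\prod_i W_{i,j_i}$, and consequently $X = \overline{\rho_r(W^r)}$ is the finite union of the irreducible closed subsets $\overline{\rho_r(\prod_i W_{i,j_i})}$. Since $X$ is irreducible, at least one of these pieces already equals $X$, so the previous paragraph applies to that piece and gives $\sum_i \dim W_{i,j_i} \ge \dim X$; combining this with $\dim W_i \ge \dim W_{i,j_i}$ recovers \eqref{eqn:dimension counting}.

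There is no real geometric obstacle here — the corollary is essentially a restatement of Proposition \ref{prop:dimension counting} once the dimension of a product has been identified. The only point requiring care is the bookkeeping of irreducibility, which is handled by passing to irreducible components as above.
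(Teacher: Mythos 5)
Your proof is correct and takes essentially the same approach the paper intends: apply Proposition \ref{prop:dimension counting} to the polynomial map $\rho_r: W^r \to X$, using $\dim W^r = \sum_{i=1}^r \dim W_i$. The paper states the corollary without an explicit proof, and your additional bookkeeping for reducible $W_i$ simply fills in details the paper leaves implicit.
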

\begin{corollary}\label{cor:dimension counting 2}
If $\dim W_1= \cdots = \dim W_r=m $ and $\rho_r$ is dominant, then 
\[
r\ge \lceil \frac{\dim X}{m} \rceil.
\]  
\end{corollary}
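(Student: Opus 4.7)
The plan is to invoke Corollary~\ref{cor:dimension counting 1} and perform a one-line arithmetic reduction. Since $\rho_r$ is assumed to be dominant, Corollary~\ref{cor:dimension counting 1} gives
\[
\sum_{i=1}^r \dim W_i \;\ge\; \dim X.
\]
The uniform hypothesis $\dim W_1 = \cdots = \dim W_r = m$ collapses the left hand side to $rm$, so $rm \ge \dim X$, which yields $r \ge \dim X / m$. Because $r$ is a positive integer, this inequality is equivalent to $r \ge \lceil \dim X / m \rceil$, as claimed.

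There is essentially no obstacle: the entire content of the corollary is that Corollary~\ref{cor:dimension counting 1} specializes cleanly to the equidimensional situation, and the step from Proposition~\ref{prop:dimension counting} has already absorbed the nontrivial algebro-geometric input. The only minor point worth flagging is that passing from $r \ge \dim X / m$ to the ceiling uses that $r$ is an integer, which is built into the setup (the number of factors is a positive integer). One should also tacitly assume $m > 0$ for the quotient to make sense; if $m = 0$ then each $W_i$ is a single point and $\rho_r$ can be dominant only when $\dim X = 0$, in which case the bound is vacuous.
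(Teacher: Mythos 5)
Your proof is correct and follows exactly the route the paper intends: specialize Corollary~\ref{cor:dimension counting 1} to the equidimensional case to get $rm \ge \dim X$, then use that $r$ is a positive integer to pass to the ceiling. Nothing further is needed.
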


We say that an algebraic subvariety $W\subset \mathbb{C}^{n\times n}$ is a \textit{cone} if $x\in W$ implies that $\lambda x\in W$ for any $\lambda \in \mathbb{C}$. Assume that $W_1,\dots, W_r\subset X$ are cones. For any $A\in X$ with a decomposition
\[
A = A_1\dots A_r,~\text{where}~ A_1\in W_1,\dots, A_r\in W_r,
\] 
$\rho_r^{-1}(A)$ contains the subvariety
\[
Z_A = \{(\lambda_1A_1,\dots, \lambda_rA_r)\in W^r: \lambda_i\in \mathbb{C}, \prod_{i=1}^r \lambda_i = 1\}.
\]
It is clear from the definition of $Z_A$ that
\[
\dim Z_A = r-1.
\]
Apply Proposition \ref{prop:dimension counting} to this case we obtain
\begin{corollary}\label{cor:dimension counting 3}
If $W_1,\dots, W_r$ are cones and $\rho_r: W^r \to X$ is dominant, then
\[
\sum_{i=1}^r \dim W_i - (r-1)\ge \dim X. 
\]
\end{corollary}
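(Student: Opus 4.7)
The plan is a direct application of Proposition \ref{prop:dimension counting} combined with the explicit lower bound on the fiber dimension provided by the cone subvariety $Z_A$ constructed just before the statement.

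First I would invoke Proposition \ref{prop:dimension counting} for the map $\rho_r : W^r \to X$ (treating $W^r$ and $X$ as irreducible, which is the setting in which the dimension formula is stated; otherwise one restricts to an irreducible component of $W^r$ that dominates $X$). Since $\rho_r$ is dominant, for a generic $A \in X$ we have
\[
\dim \rho_r^{-1}(A) \;=\; \dim W^r - \dim X \;=\; \sum_{i=1}^r \dim W_i - \dim X.
\]
Because $\rho_r$ is dominant, the set of $A$ lying in the image $\rho_r(W^r)$ contains a Zariski open dense subset of $X$; intersecting with the generic locus on which the above fiber dimension formula holds, we can fix such an $A$ together with an actual decomposition $A = A_1 \cdots A_r$ with $A_i \in W_i$.

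Next I would use the cone hypothesis on each $W_i$: for every $(\lambda_1, \ldots, \lambda_r) \in \mathbb{C}^r$ satisfying $\prod_{i=1}^r \lambda_i = 1$, the tuple $(\lambda_1 A_1, \ldots, \lambda_r A_r)$ lies in $W^r$ (since $\lambda_i A_i \in W_i$), and it is a preimage of $A$ under $\rho_r$. Thus the subvariety
\[
Z_A \;=\; \{(\lambda_1 A_1, \ldots, \lambda_r A_r) : \lambda_i \in \mathbb{C},\ \textstyle\prod_{i=1}^r \lambda_i = 1\}
\]
sits inside $\rho_r^{-1}(A)$, and as already observed $\dim Z_A = r-1$ (it is cut out in $(\mathbb{C}^*)^r$ by the single nontrivial equation $\prod \lambda_i = 1$). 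Hence $\dim \rho_r^{-1}(A) \ge r - 1$.

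Combining the two inequalities gives $\sum_{i=1}^r \dim W_i - \dim X \ge r - 1$, which is the desired bound. The only substantive step is making sure the generic $A$ used in the fiber dimension formula can simultaneously be taken to admit some decomposition $A = A_1 \cdots A_r$; this is a mild genericity issue that follows from dominance and is the only place one must be careful. Everything else is the algebraic identity $\rho_r(\lambda_1 A_1, \ldots, \lambda_r A_r) = (\prod \lambda_i) A_1 \cdots A_r$.
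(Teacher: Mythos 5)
Your proposal is correct and follows essentially the same argument as the paper: apply Proposition \ref{prop:dimension counting} to the dominant map $\rho_r$ and use the cone hypothesis to exhibit the $(r-1)$-dimensional subvariety $Z_A$ inside a generic fiber, which is exactly how the paper sets it up in the paragraph preceding the corollary. Your explicit handling of the genericity issue (choosing $A$ simultaneously in the open dense image locus and in the locus where the fiber-dimension formula holds) is slightly more careful than the paper's terse presentation, but it is the same mechanism.
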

\begin{corollary}\label{cor:dimension counting 4}
If $W_1,\dots, W_r$ are cones of the same dimension $m$ and $\rho_r$ is dominant, then
\[
r\ge \lceil \frac{\dim X - 1}{m-1}\rceil.
\]
\end{corollary}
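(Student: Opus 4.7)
The plan is to deduce this corollary as a direct specialization of the preceding Corollary~\ref{cor:dimension counting 3}, which already bounds $\sum_{i=1}^r \dim W_i - (r-1)$ from below by $\dim X$ whenever each $W_i$ is a cone and $\rho_r$ is dominant. All the real content (the construction of the subvariety $Z_A \subset \rho_r^{-1}(A)$ of dimension $r-1$ coming from the scaling action on cones, combined with Proposition~\ref{prop:dimension counting}) has been absorbed into that earlier corollary, so nothing new about algebraic geometry is needed here.

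First I would invoke Corollary~\ref{cor:dimension counting 3} under the hypothesis that $W_1,\dots,W_r$ are cones and $\rho_r$ is dominant, obtaining
\[
\sum_{i=1}^r \dim W_i - (r-1) \ge \dim X.
\]
Next I would substitute the equal-dimension hypothesis $\dim W_1 = \cdots = \dim W_r = m$ into the left-hand side, which collapses the sum to $rm$ and yields
\[
rm - (r-1) \ge \dim X, \qquad \text{i.e.,} \qquad r(m-1) \ge \dim X - 1.
\]

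Finally I would divide by $m-1$ (noting that the case $m=1$ is degenerate: a $1$-dimensional cone is essentially a line through the origin, so the inequality $rm-(r-1)\ge \dim X$ forces $\dim X \le 1$, and the stated bound is then vacuous or trivial) and use the fact that $r$ is a positive integer to pass to the ceiling:
\[
r \ge \left\lceil \frac{\dim X - 1}{m-1} \right\rceil.
\]

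There is no real obstacle; the only mild subtlety is the edge case $m=1$, which I would dispose of in a single sentence by observing that the bound in Corollary~\ref{cor:dimension counting 3} itself already handles that situation. The proof is essentially one line of arithmetic applied to the previous corollary.
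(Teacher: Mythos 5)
Your proof is correct and is exactly the intended argument: specialize Corollary~\ref{cor:dimension counting 3} to the equal-dimension case, rearrange to $r(m-1)\ge \dim X-1$, and take the ceiling since $r$ is an integer. The paper leaves this corollary without an explicit proof precisely because it is this one-line arithmetic consequence, and your handling of the degenerate $m=1$ case is a reasonable extra remark.
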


\subsection{Criterion for dominant maps}
\begin{proposition}\label{prop:dominant}
Let $r$ be an integer and $W_1,\dots,W_r$ be subvarieties of $X$. If there is a point $a =(A_1,\dots, A_r) \in W^r$ such that the differential 
\[
d\rho_r|_{a}: T_{A_1}W_1\times \cdots \times T_{A_r}W_r \to T_{\rho(a)}X
\]
has full rank $\dim X$, then the map $\rho_r$ is dominant.
\end{proposition}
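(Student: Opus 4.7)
The plan is to invoke the standard algebraic dimension theorem for the Zariski closure of the image of a morphism, combined with a semi-continuity observation to handle a potentially singular base point. First I would pass to the irreducible component $W_i'$ of $W_i$ containing $A_i$ for each $i$, and to the irreducible component of $X$ containing $\rho_r(a)$, so that $W' := W_1'\times\cdots\times W_r'$ and $X$ are both irreducible, $a\in W'$, and $\rho_r$ still maps $W'$ into $X$; the hypothesis on $d\rho_r|_a$ is unchanged by this reduction. Since the rank of a matrix of polynomials is Zariski lower semi-continuous, the set $\{y\in W':\operatorname{rank} d\rho_r|_y\geq \dim X\}$ is a Zariski open neighborhood of $a$ in $W'$, and hence meets the smooth locus $W'_{\mathrm{sm}}$ (which is Zariski dense in $W'$) at some smooth point $y_0$.

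Next I would apply the standard fact \cite{shafarevich,Harris} that for a polynomial map $f:Y\to Z$ between irreducible varieties, $\dim\overline{f(Y)}$ equals the maximum of $\operatorname{rank} df_y$ taken over smooth points $y\in Y$. Applied with $f=\rho_r$, $Y=W'$, $Z=X$, and the smooth point $y_0$, this yields $\dim\overline{\rho_r(W')}\geq \dim X$. Since $\overline{\rho_r(W')}\subseteq X$ and $X$ is irreducible of dimension $\dim X$, equality of dimensions forces equality of varieties, so $\overline{\rho_r(W^r)}\supseteq\overline{\rho_r(W')}=X$, which is precisely the dominance statement. An equivalent analytic packaging replaces the dimension theorem by the holomorphic implicit function theorem applied at the smooth point $y_0$: surjectivity of $d\rho_r|_{y_0}$ produces a Euclidean open neighborhood $U\subset X$ of $\rho_r(y_0)$ with $U\subset \rho_r(W')$, and $U$ is Zariski dense in $X$ because any proper Zariski closed subset of irreducible $X$ has strictly smaller complex dimension and empty Euclidean interior.

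The main subtle step is the semi-continuity reduction, which is needed because the proposition does not assume $a$ to be a smooth point of $W^r$: without this step one cannot directly feed $a$ into the dimension-of-image theorem, which requires a smooth base point for the Jacobian computation. Once this reduction is in place the remainder of the proof is routine, and I do not foresee a genuine obstacle beyond being careful that the rank hypothesis is preserved under passing to irreducible components containing $a$ and $\rho_r(a)$.
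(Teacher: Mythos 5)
Your argument is correct and is essentially the paper's argument: both hinge on lower semi-continuity of the Jacobian rank together with the fact that the generic rank of the differential equals the dimension of the closure of the image. The paper phrases it as a proof by contradiction and is terser about passing to irreducible components and to a smooth base point, whereas you spell those reductions out; this is the same route with the technical hypotheses made explicit rather than a different proof.
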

\begin{proof}
Suppose that $\overline{\rho_r(W^r)}$ is not equal to $X$, then it is a proper subvariety of $X$ and hence it has dimension strictly smaller than that of $X$. Therefore, we have that the rank of $d\rho_r|_{a'}$ is strictly smaller than $\dim X$ for generic $a'\in W^r$. However, the assumption that there exists some $a\in W^r$ such that $d\rho_r|_{a}$ has the maximal rank implies that for a generic point $a'\in W^r$, we must have that the rank of $d\rho_r|_{a'}$ is equal to $\dim X$.
\end{proof}
For readers unfamiliar with the calculation of the differential $d\rho_r|_a$, we record the following formula
\begin{equation}
d\rho_r|_a(X_1,\dots, X_r) = \sum_{i=1}^r A_1\cdots A_{i-1} X_i A_{i+1} \cdots A_r,
\label{eqn:differential}
\end{equation}
where $X_i\in T_{A_i} W_i$. If in particular $W_i$ is a linear subspace of $\mathbb{C}^{n\times n}$, then we may identify the tangent space $T_{A_i}W_i$ as $W_i$ itself. We will apply formula \eqref{eqn:differential} repeatedly the the rest of this paper.

\subsection{Criterion for surjective maps}
\begin{proposition}\cite{Borel}\label{prop:group generation}
Let $G$ be a topological group and let $U$ be an open dense subset of $G$. Assume that $U$ contains the identity element of the group $G$. Then 
\[
G = U \cdot U,
\]
i.e, every element $g\in G$ is of the form $hh'$ for some $h,h'\in U$.
\end{proposition}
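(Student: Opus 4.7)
The plan is to reduce the statement to the classical topological fact that in any nonempty topological space the intersection of two open dense subsets is again open and dense, hence nonempty. Given any $g\in G$, I would show that $U\cap gU^{-1}$ is nonempty, where $U^{-1}\coloneqq \{u^{-1}:u\in U\}$. Once an element $h$ of that intersection is produced, we can write $h = g(h')^{-1}$ for some $h'\in U$, whence $g = hh'$ with both factors in $U$, which is exactly the desired conclusion.

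The crux is therefore to verify that $gU^{-1}$ is itself open and dense in $G$. I would use two structural properties of a topological group in sequence. First, the inversion map $\iota:G\to G$, $\iota(x)=x^{-1}$, is a homeomorphism, so $U^{-1}=\iota(U)$ inherits from $U$ the property of being open and dense. Second, the left translation $L_g:G\to G$, $L_g(x)=gx$, is also a homeomorphism, so $gU^{-1}=L_g(U^{-1})$ is again open and dense. The intersection of two open dense subsets $V,W$ of $G$ is nonempty by a one-line argument: $V$ is a nonempty open set, so the density of $W$ in $G$ forces $V\cap W\neq\emptyset$.

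I do not foresee any substantive obstacle; the argument is essentially a two-step application of the homogeneity of a topological group under left translation, combined with a Baire-type observation for open dense sets. It is worth noting that the hypothesis that $U$ contains the identity of $G$ is not actually used in the sketch above. This hypothesis is natural in context because in the applications that follow, $U$ will arise as the image of a dominant polynomial map $\rho_r$ whose domain contains a tuple of identity matrices, so the identity automatically lies in $U$; it is recorded in the statement for convenience even though a slightly stronger version holds.
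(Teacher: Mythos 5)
Your proof is correct and is precisely the standard argument: for any $g\in G$, the set $gU^{-1}$ is open and dense (inversion and left translation being homeomorphisms), so $U\cap gU^{-1}\neq\emptyset$, and any element $h$ of this intersection gives $g=hh'$ with $h,h'\in U$. The paper itself does not give a proof of this proposition but cites Borel's \emph{Linear Algebraic Groups}, where the argument is exactly the translate-and-intersect one you present. Your remark that the hypothesis $e\in U$ is not actually used is also accurate; the conclusion $G=U\cdot U$ requires only that $U$ be open and dense, and the identity hypothesis is carried along in the statement because in the paper's applications $U$ arises as the image of a dominant map evaluated at a tuple of identity matrices.
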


\begin{theorem}[open mapping theorem]\cite{Taylor}\label{thm:open mapping}
Let $X,Y$ be two irreducible varieties and let $f:X\to Y$ be a polynomial map. If $f$ is dominant then there is some $U\subset f(X)$ which is open and dense in $Y$.
\end{theorem}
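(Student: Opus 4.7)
The plan is to deduce this from Chevalley's theorem on constructible sets, which is the standard tool for statements of this type and is what the cited reference (Taylor) establishes. Recall that a constructible subset of a variety $Y$ is a finite union of locally closed subsets, i.e., sets of the form $U \cap Z$ with $U$ open and $Z$ closed in $Y$. Chevalley's theorem asserts that the image of any polynomial map of algebraic varieties is constructible.

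First, I would apply Chevalley's theorem to $f$ to write
\[
f(X) = \bigcup_{i=1}^{k} (U_i \cap Z_i),
\]
where each $U_i \subseteq Y$ is Zariski open and each $Z_i \subseteq Y$ is Zariski closed. Taking Zariski closures and using the dominance hypothesis $\overline{f(X)} = Y$, I would observe that
\[
Y = \overline{f(X)} = \bigcup_{i=1}^{k} \overline{U_i \cap Z_i} \subseteq \bigcup_{i=1}^{k} Z_i \subseteq Y,
\]
so $Y = \bigcup_{i=1}^{k} Z_i$. The irreducibility of $Y$ then forces some $Z_j$ to coincide with $Y$. For this index $j$, the locally closed piece $U_j \cap Z_j$ equals $U_j$, a nonempty Zariski open subset of $Y$ contained in $f(X)$. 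Because $Y$ is irreducible, any nonempty Zariski open subset is automatically dense, so $U \coloneqq U_j$ is the required open dense subset of $Y$ lying inside $f(X)$.

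The only real obstacle is the invocation of Chevalley's theorem itself, whose proof proceeds by Noetherian induction and, in its algebraic form, by a careful analysis of how the spectrum of a finitely generated ring extension maps to the base; this is standard enough that the paper (reasonably) takes it as a black box. Once constructibility of $f(X)$ is granted, the deduction above is essentially a short combinatorial argument using only the irreducibility of $Y$ and the definition of dominance.
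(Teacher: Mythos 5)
The paper itself offers no proof of this theorem: it is a standard fact quoted from the cited reference \cite{Taylor} and used as a black box, so there is no in-paper argument to compare against. Your deduction from Chevalley's theorem on constructible sets is the standard route and is essentially correct. The only loose end is the assertion that $U_j$ is nonempty once you know $Z_j = Y$: as written this does not quite follow, since the constructible decomposition is allowed to contain a redundant piece with $U_j \cap Z_j = \emptyset$. The fix is routine --- either discard any empty pieces $U_i \cap Z_i$ at the outset, so that $U_j = U_j \cap Z_j$ is nonempty by assumption, or note that if every $U_i$ with $Z_i = Y$ were empty then $f(X)$ would sit inside $\bigcup_{\{i : Z_i \neq Y\}} Z_i$, a proper closed subset of the irreducible variety $Y$, contradicting dominance. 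With that cleanup the argument is complete.
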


\begin{proposition}\label{prop:surjective}
Let $W_1=\cdots =W_r =W$ be a linear subspace and $X = \mathbb{C}^{n\times n}$. Suppose that $W$ contains all diagonal matrices and that $\rho_r$ is dominant. Then the map
\[
\rho_{r'}: W^{r'}\coloneqq \underset{r'\text{ copies }} {\underbrace{W\times \cdots \times W}} \to \mathbb{C}^{n\times n}
\]
defined by matrix multiplication is surjective for $r' = 4r + 1$.
\end{proposition}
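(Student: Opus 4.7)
The plan is to first restrict attention to invertible matrices using the open mapping theorem, apply a group-theoretic argument to cover $\operatorname{GL}_n(\mathbb{C})$, and then handle singular matrices by a rank factorization that exploits the hypothesis that $W$ contains all diagonal matrices. This gives a split of $4r+1 = 2r + 1 + 2r$, where $2r$ factors build each of two invertible ``sandwich'' matrices and one extra factor absorbs a diagonal rank indicator.

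In detail, since $\rho_r$ is dominant and $\mathbb{C}^{n\times n}$ is irreducible, Theorem \ref{thm:open mapping} produces an open dense subset $U \subseteq \rho_r(W^r)$ of $\mathbb{C}^{n\times n}$. Intersecting with $\operatorname{GL}_n(\mathbb{C})$ yields $U_0 := U \cap \operatorname{GL}_n(\mathbb{C})$, a nonempty Zariski open subset of the irreducible variety $\operatorname{GL}_n(\mathbb{C})$. For any $g \in \operatorname{GL}_n(\mathbb{C})$, both $U_0$ and $g \cdot U_0^{-1}$ are nonempty Zariski open subsets of $\operatorname{GL}_n(\mathbb{C})$, since inversion and left translation by $g$ are automorphisms of the algebraic group $\operatorname{GL}_n(\mathbb{C})$. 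By irreducibility these two open sets must meet; picking $h \in U_0 \cap g U_0^{-1}$ and setting $k := h^{-1} g \in U_0$ gives $g = hk$. This realizes the principle of Proposition \ref{prop:group generation} in the algebraic setting and yields
\[
\operatorname{GL}_n(\mathbb{C}) \;\subseteq\; U_0 \cdot U_0 \;\subseteq\; \rho_r(W^r) \cdot \rho_r(W^r) \;=\; \rho_{2r}(W^{2r}).
\]

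Finally, for an arbitrary $A \in \mathbb{C}^{n\times n}$ of rank $k$, the standard rank factorization over $\mathbb{C}$ produces $P, Q \in \operatorname{GL}_n(\mathbb{C})$ and the diagonal matrix $D := \operatorname{diag}(1,\ldots,1,0,\ldots,0)$ with $k$ ones such that $A = P D Q$. By hypothesis $D \in W$, and by the previous step we may write $P = B_1 \cdots B_{2r}$ and $Q = C_1 \cdots C_{2r}$ with all $B_i, C_j \in W$. Concatenating exhibits $A = B_1 \cdots B_{2r} \cdot D \cdot C_1 \cdots C_{2r}$ as a product of $4r+1$ elements of $W$, so $\rho_{4r+1}$ is surjective. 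The main subtlety to keep in mind is that $U_0$ need not contain the identity, so Proposition \ref{prop:group generation} does not apply verbatim with $U = U_0$; what saves the argument is the irreducibility of $\operatorname{GL}_n(\mathbb{C})$, which forces any two nonempty Zariski opens to meet and thereby supplies the $g = hk$ decomposition directly.
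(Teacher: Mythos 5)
Your proof is correct and follows essentially the same route as the paper: open mapping theorem to get a dense open $U \subseteq \rho_r(W^r)$, restrict to $\operatorname{GL}_n(\mathbb{C})$, conclude $\operatorname{GL}_n(\mathbb{C}) = U_0 \cdot U_0 \subseteq \rho_{2r}(W^{2r})$, and then finish with the rank factorization $A = PDQ$ to account for singular matrices and arrive at $4r+1$. The one genuine improvement in your write-up is that you notice the paper invokes Proposition \ref{prop:group generation} (which is stated with the hypothesis that $U$ contains the identity) on a dense open set $U_0$ that need not contain the identity, and you supply the clean self-contained fix: for $g \in \operatorname{GL}_n(\mathbb{C})$, the nonempty Zariski-open sets $U_0$ and $g U_0^{-1}$ must intersect by irreducibility, which directly yields $g \in U_0 \cdot U_0$ without any identity hypothesis.
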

\begin{proof}
Since $\rho_r$ is dominant, by Theorem \ref{thm:open mapping} its image 
\[
\rho_r(W^{r})\coloneqq \underset{r\text{ copies }} {\underbrace{W\times \cdots \times W}} 
\]
contains an open dense subset of $\mathbb{C}^{n\times n}$. This implies that $\rho_r(W^{r})$ contains an open dense subset of the group $\operatorname{GL}_n(\mathbb{C})$ because $\operatorname{GL}_n(\mathbb{C})$ is an open dense subset of $\mathbb{C}^{n\times n}$. By Proposition \ref{prop:group generation} we see that 
\[
\operatorname{GL}_n(\mathbb{C}) \subseteq \rho_r(W^{r})\cdot \rho_r(W^{r}).
\]
Lastly, if $A\in \mathbb{C}^{n\times n}$ then there exists $P,Q\in \operatorname{GL}_n$ and a diagonal matrix $D\in \mathbb{C}^{n\times n}$ such that 
\[
A = P D Q.
\]
Hence we see that 
\[
\mathbb{C}^{n\times n} \subseteq \rho_r(W^{r})\cdot W \cdot \rho_r(W^{r})
\]
\end{proof}

\subsection{Our strategy}\label{subsec:strategy}
Let $X$ be an algebraic subvariety of $\mathbb{C}^{n\times n}$ which is closed under matrix multiplication and let $W_1,\dots, W_r$ be $r$ algebraic subvarieties of $X$. We define
\[
\rho_r: W^r \coloneqq W_1\times \cdots W_r \to X.
\]
In general, we may answer Question \ref{question:main} by the following strategy.
\begin{enumerate}
\item We first calculate the lower bound $r_0$ of $r$ for $\rho_r$ to be dominant according to Corollaries \ref{cor:dimension counting 2}, \ref{cor:dimension counting 3} and \ref{cor:dimension counting 4}. If $r< r_0$ then $\rho_r$ can not be dominant.
\item If $r\ge r_0$, we calculate the differential $d\rho_r|_a$ of $\rho_r$ at a point $a\in W^r$. If $d\rho_r|_a$ has the maximal rank $\dim X$ then $\rho_r$ is dominant by Proposition \ref{prop:dominant}.
\item If $W_1=\cdots = W_r = W$ is a linear subspace of $X=\mathbb{C}^{n\times n}$ containing all diagonal matrices and $\rho_r$ is dominant then by Proposition \ref{prop:surjective} $\rho_{r'}$ is surjective where $r' = 4r+ 1$.
\end{enumerate}
The main step in our strategy is to find a point $a\in W^r$ such that the differential of $\rho_r$ at $a$ has the rank $\dim X$. The rest of this paper is concentrating on applying the above strategy to answer Question \ref{question:main} for various choices of $W_i$ and $X$. 

\section{Toy examples: LU and QR decompositions}\label{sec:toy example}
In this section, we will discuss the matrix decomposition for two factors, which is the simplest case. We can recover the existence of LU and QR decompositions for a generic matrix using our method. 

We know that a generic matrix has
the LU decomposition and every matrix has the QR decomposition. Although the existence of the LU decomposition and the QR decomposition is
quit elementary and clear from the linear algebra point of view, it is
interesting to recover it from other point of view. In fact, we will prove a more general result.

\begin{theorem}\label{thm:triangular}
Let $W_1$ and $W_2$ be two algebraic subvarieties of $\mathbb{C}^{n\times n}$ such that
\begin{enumerate}
\item both $W_1$ and $W_2$ contain the identity matrix $I_n$ as a smooth point, and
\item $T_{I_n}W_1 + T_{I_n}W_2 = \mathbb{C}^{n\times n}$.
\end{enumerate}  
Then a generic $n\times n$ matrix is a product of some $A_1\in W_1$ and $A_2\in W_2$. Moreover, every $n\times n$ matrix $M$ is a product of some $A_1,B_1,C_1,D_1\in W_1$, $A_2,B_2,C_2,D_2\in W_2$ and a diagonal matrix $E$, i.e.,
\[
M = A_1B_1A_2B_2EC_1C_2D_1D_2.
\]
\end{theorem}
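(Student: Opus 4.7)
The plan is to apply the three-step strategy of Section \ref{subsec:strategy}: a single tangent-space calculation will handle dominance, and then the open mapping theorem together with Proposition \ref{prop:group generation} and the elementary $PEQ$ normal form will yield the universal decomposition.

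For the generic assertion, I consider $\rho_2: W_1 \times W_2 \to \mathbb{C}^{n \times n}$, $\rho_2(A_1, A_2) = A_1 A_2$. By hypothesis (i), $(I_n, I_n)$ is a smooth point of $W_1 \times W_2$, and by formula \eqref{eqn:differential} the differential there is simply
\[
d\rho_2|_{(I_n, I_n)}(X_1, X_2) = X_1 \cdot I_n + I_n \cdot X_2 = X_1 + X_2,
\]
so its image equals $T_{I_n} W_1 + T_{I_n} W_2 = \mathbb{C}^{n \times n}$ by hypothesis (ii). Thus $d\rho_2|_{(I_n,I_n)}$ has maximal rank $n^2$, and Proposition \ref{prop:dominant} gives that $\rho_2$ is dominant, proving the first assertion.

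For the universal assertion, I promote dominance to a covering of $\operatorname{GL}_n(\mathbb{C})$ and then attach a diagonal factor. By Theorem \ref{thm:open mapping}, $\rho_2(W_1 \times W_2)$ contains a Zariski-open dense subset $V$ of $\mathbb{C}^{n \times n}$; since $\operatorname{GL}_n(\mathbb{C})$ is itself Zariski-open dense, $V \cap \operatorname{GL}_n(\mathbb{C})$ is open dense in $\operatorname{GL}_n(\mathbb{C})$, and $(I_n, I_n) \in W_1 \times W_2$ maps to $I_n$, which lies in $\rho_2(W_1\times W_2)$. Proposition \ref{prop:group generation} (applied in the standard form: for any $g \in \operatorname{GL}_n(\mathbb{C})$ the open dense sets $V \cap \operatorname{GL}_n(\mathbb{C})$ and $g \cdot (V \cap \operatorname{GL}_n(\mathbb{C}))^{-1}$ must intersect) then yields
\[
\operatorname{GL}_n(\mathbb{C}) \subseteq \rho_2(W_1 \times W_2) \cdot \rho_2(W_1 \times W_2),
\]
so every invertible matrix is a product of four factors drawn alternately from $W_1$ and $W_2$. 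Finally, every $M \in \mathbb{C}^{n \times n}$ has an elementary normal form $M = PEQ$ with $P, Q \in \operatorname{GL}_n(\mathbb{C})$ and $E$ diagonal (obtained by row and column reduction). Substituting the four-factor decompositions of $P$ and $Q$ obtained above produces the nine-factor product $M = P \cdot E \cdot Q$ of the stated form (up to the labelling of factors from $W_1$ and $W_2$).

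The main (and only mild) obstacle is the dominance-to-surjectivity passage: one must verify that the open dense image guaranteed by Theorem \ref{thm:open mapping} really does combine with the group law on $\operatorname{GL}_n(\mathbb{C})$ to cover every invertible matrix. This is precisely the content of the Borel-type argument behind Proposition \ref{prop:group generation}, and no assumption on $I_n$ lying in the open dense subset $V$ is actually required. Once this is granted, the rest is a direct application of the machinery of Section \ref{sec:general method}.
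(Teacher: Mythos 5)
Your argument is correct and follows essentially the same route as the paper: the same differential computation at $(I_n,I_n)$ gives dominance via Proposition \ref{prop:dominant}, and the moreover part is the same open-mapping/group-generation/$PEQ$ argument that underlies Proposition \ref{prop:surjective}. One small merit of your write-up is that the paper simply cites Proposition \ref{prop:surjective}, whose stated hypotheses (a single \emph{linear} subspace $W$ containing all diagonal matrices, repeated $r$ times) are not literally met here since $W_1,W_2$ are arbitrary subvarieties such as $O(n)$, so running the argument explicitly on $\rho_2(W_1\times W_2)$ and carrying the diagonal $E$ along as a separate factor, as you do, is the cleaner way to land exactly on the nine-factor formula in the statement.
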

Here, by definition, we can identify the tangent space $T_{A_i} W_i$ at a smooth point $A_i$ with a linear subspace in $\mathbb{C}^{n\times n}$ for $i=1,2$.
\begin{proof}
Consider the differential
$d\rho_2|_{(I_n,I_n)}$ of $\rho_2$ at $(I_{n},I_{n})$, then by formula \eqref{eqn:differential} we must have 
\[
d\rho_2|_{(I_n,I_n)}(X_1,X_2) = X_1 + X_2,
\]
for any $X_i \in W_i,i=1,2$. By assumption we see that $d\rho_2|_{(I_n,I_n)}$ is surjective hence $\rho_2$ is dominant by Proposition \ref{prop:dominant}. The moreover part follows from Proposition \ref{prop:surjective}.
\end{proof}

Here we remind readers that by generic, we mean there are polynomials $F_1,\dots, F_r\in \mathbb{C}[x_{ij}]$ such that whenever $A = (a_{ij})$ is a matrix such that $A$ cannot be expressed as a product of some $A_1\in W_1$ and $A_2\in W_2$, then we must have 
\[
F_1(A) = \cdots = F_r (A) = 0.
\]
We warn readers that a generic matrix has a decomposition $A = A_1A_2$ for $A_i\in W_i,i=1,2$, does not imply that every matrix has such a decomposition. Intuitively speaking, a generic matrix has decomposition means ``most" matrices admit such a decomposition. For example, a generic matrix has the LU decomposition but it is not true that every matrix has the LU decomposition. We will see this phenomenon again in Section \ref{subsec:companion}, where we prove that a generic $n\times n$ matrix is a product of $n$ companion matrices, but there exits $n\times n$ matrices that do not admit such a decomposition.

\subsection{Triangular decomposition}\label{subsec:triangular}

We apply Theorem \ref{thm:triangular} to the LU decomposition and its variants.
\begin{corollary}\label{cor:triangular}
Let $A$ be a generic $n\times n$ matrix. Then

\begin{enumerate}
\item There exist a lower triangular matrix $L$ and an upper triangular matrix
$U$ such that
\[
A=LU.
\]

\item There exist a lower triangular matrix $L$ and an upper triangular matrix
$U$ such that
\[
A=UL.
\]

\item There exist a top triangular matrix $T$ and a bottom triangular matrix
$B$ such that
\[
A=TB.
\]

\item There exist a top triangular matrix $T$ and a bottom triangular matrix
$B$ such that
\[
A=BT.
\]

\end{enumerate}
\end{corollary}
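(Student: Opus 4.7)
The plan is to apply Theorem \ref{thm:triangular} to each of the four subcases in turn. For each decomposition $A = M_1 M_2$ I take $W_1$ and $W_2$ to be the two relevant linear subspaces of $\mathbb{C}^{n\times n}$: lower and upper triangular matrices in cases (1) and (2), and top and bottom triangular matrices in cases (3) and (4). Since each $W_i$ is a linear subspace, it is smooth at every one of its points and the tangent space at any point $M \in W_i$ equals $W_i$ itself. The problem therefore reduces to verifying, in each case, the two hypotheses of Theorem \ref{thm:triangular}: that $I_n$ lies in $W_1 \cap W_2$, and that $T_{I_n} W_1 + T_{I_n} W_2 = \mathbb{C}^{n\times n}$. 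Because the $W_i$ are linear subspaces, this second condition is just $W_1 + W_2 = \mathbb{C}^{n\times n}$.

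For (1) and (2) both hypotheses are immediate and familiar: $I_n$ is simultaneously lower triangular and upper triangular, and any $M \in \mathbb{C}^{n \times n}$ is the sum of its lower-triangular part and its strictly upper-triangular part, so $W_L + W_U = \mathbb{C}^{n\times n}$. The dimension check $\dim W_L + \dim W_U - \dim(W_L \cap W_U) = \tfrac{n(n+1)}{2} + \tfrac{n(n+1)}{2} - n = n^2$ serves as a sanity check. For (3) and (4) I carry out the analogous verification with the top/bottom triangular subspaces, exhibiting an explicit entrywise decomposition $M = M_T + M_B$ of an arbitrary matrix to conclude that $W_T + W_B = \mathbb{C}^{n\times n}$.

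The main step, and the only one that is not completely mechanical, is the combinatorial bookkeeping in (3) and (4): one has to check that the zero-pattern constraints defining the top and bottom triangular classes are jointly complementary, in the sense that every entry $(i,j)$ is unconstrained in at least one of the two classes, and that the main diagonal is unconstrained in both (so that $I_n$ lies in the intersection). Once this is verified, Theorem \ref{thm:triangular} applied to each of the four pairs produces factors $M_1 \in W_1$ and $M_2 \in W_2$ whose product realizes a generic $n \times n$ matrix, which is exactly the content of Corollary \ref{cor:triangular}.
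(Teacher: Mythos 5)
For cases (1) and (2), your plan is correct and is essentially what the paper intends: $I_n$ is simultaneously upper and lower triangular, $W_L+W_U=\mathbb{C}^{n\times n}$, and Theorem~\ref{thm:triangular} applies directly. For cases (3) and (4), however, you claim to have checked that $I_n\in W_T\cap W_B$ but never actually do so, and the claim is false. With the definitions used in \cite{yelim} (a top triangular matrix has $a_{ij}=0$ whenever $i+j>n+1$; a bottom triangular matrix has $a_{ij}=0$ whenever $i+j\le n$), the zero-pattern constraints cut along the \emph{anti}-diagonal, not the main diagonal. For $n\ge 2$ the identity matrix is therefore neither top triangular (the entry $(n,n)$ has $2n>n+1$) nor bottom triangular (the entry $(1,1)$ has $2\le n$), so the first hypothesis of Theorem~\ref{thm:triangular} fails at $(I_n,I_n)$ and the theorem cannot be invoked as you propose.

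Moreover, the obstruction is not merely a bad choice of base point. For any top triangular $T$ and bottom triangular $B$, the $(i,j)$-entry of $TB$ is $\sum_k T_{ik}B_{kj}$, and a nonzero summand requires $n-j<k\le n+1-i$, which forces $i\le j$; so every product $TB$ is upper triangular, and by the symmetric computation every $BT$ is lower triangular. Hence $\rho_2:W_T\times W_B\to\mathbb{C}^{n\times n}$ has image contained in a proper subvariety and is never dominant, so parts (3) and (4) cannot follow from Theorem~\ref{thm:triangular} in the way you describe, regardless of base point. The pairings of triangular classes that do satisfy the theorem's hypotheses at a suitable point such as $(I_n,J)$, where $J$ is the exchange matrix, are lower-with-top, top-with-upper, upper-with-bottom, and bottom-with-lower (these arise from conjugating or one-sidedly multiplying the $LU$ picture by $J$); before invoking the theorem for (3) and (4) you need to determine which pairings are actually intended, and since $I_n$ is not available in $W_T$ or $W_B$, exhibit an explicit pair of smooth points whose tangent spaces, after the multiplication in formula~\eqref{eqn:differential}, span all of $\mathbb{C}^{n\times n}$.
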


\begin{remark}\label{remark:LU}
On the one hand, Corollary \ref{cor:triangular} does not specify what generic matrices are. It only guarantees that if we equip $\mathbb{C}^{n\times n}$ with Lebesgue measure, the probability that a randomly picked $n\times n$ matrix can be written as the product of a lower triangular matrix and an upper triangular matrix is one. On the other hand, It is known \cite{RJ} that a nonsingular matrix admit an LU decomposition if and only if all its leading principal minors are nonzero. This implies that a nonsingular matrix whose leading principal minors are all nonzero is a generic matrix in this case. However, we have more generic matrices, for example, matrices of rank $k$ whose first $k$ principal minors are nonzero are also generic matrices \cite{RJ}. It is also known \cite{O} that there exist $n\times n$ matrices which do not admit LU decompositions. Hence generic matrices for the LU decomposition form a proper subset of the space of $n\times n$ matrices.
\end{remark}

\begin{corollary}
Let $A$ be an $n\times n$ matrix. Then
\begin{enumerate}
\item There exist lower triangular matrices $L_{1},L_{2},L_{3},L_{4}$ and upper
triangular matrices $U_{1},U_{2},U_{3},U_{4}$ such that
\[
A=L_{1}U_{1}L_{2}U_{2}L_{3}U_{3}L_4U_4.
\]

\item There exist lower triangular matrices $L_{1},L_{2},L_{3},L_{4}$ and upper
triangular matrices $U_{1},U_{2},U_{3},U_{4}$ such that
\[
A=U_{1} L_{1} U_{2} L_{2} U_{3} L_{3}U_4L_4.
\]

\item There exist top triangular matrices $T_{1},T_{2},T_{3},T_4$ and bottom
triangular matrices $B_{1},B_{2},B_{3},B_4$ such that
\[
A=T_{1} B_{1} T_{2} B_{2} T_{3} B_{3}T_4B_4.
\]

\item There exist top triangular matrices $T_{1},T_{2},T_{3},T_4$ and bottom
triangular matrices $B_{1},B_{2},B_{3},B_4$ such that
\[
A=B_{1} T_{1} B_{2} T_{2} B_{3} T_{3}B_4T_4.
\]

\end{enumerate}
\end{corollary}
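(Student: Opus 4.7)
The plan is to apply Theorem \ref{thm:triangular} once for each of the four parts, making four different choices of the pair $(W_1,W_2)$ of linear subspaces of $\mathbb{C}^{n\times n}$: for parts (1) and (2), I take lower and upper triangular matrices in the two possible orderings; for parts (3) and (4), I take top and bottom triangular matrices in the two possible orderings. The statement of each part then matches the conclusion of the theorem with $W_1$ and $W_2$ read off in order from the product.

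The first step is to verify the two hypotheses of Theorem \ref{thm:triangular} in each of the four cases. Both $W_1$ and $W_2$ are linear subspaces, hence algebraic subvarieties in which every point is smooth, and the tangent space at any point may be identified with the subspace itself. The identity $I_n$ is diagonal, so it lies in each of the four triangular subspaces, giving $I_n \in W_1 \cap W_2$ as smooth points. The tangent-sum condition $T_{I_n}W_1 + T_{I_n}W_2 = \mathbb{C}^{n\times n}$ reduces to the elementary identity $W_1 + W_2 = \mathbb{C}^{n\times n}$, which is immediate: any matrix is the sum of its lower-triangular-and-diagonal part and its strictly upper-triangular part, and similarly for the top/bottom splitting across the anti-diagonal.

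Having verified the hypotheses, for any $M \in \mathbb{C}^{n\times n}$ the \emph{moreover} clause of Theorem \ref{thm:triangular} produces a nine-factor decomposition consisting of four matrices from $W_1$, four from $W_2$, and one interposed diagonal matrix $E$, organized in the alternating pattern $(W_1 W_2)^{2}\cdot E\cdot (W_1 W_2)^{2}$. The final step is to absorb $E$ into whichever neighboring factor is most convenient. Crucially, every diagonal matrix lies in \emph{both} of $W_1$ and $W_2$ in all four cases (lower, upper, top, and bottom triangular matrices each contain the full diagonal subspace), and each $W_i$ is closed under matrix multiplication. Hence the product of $E$ with an adjacent factor in $W_i$ remains in $W_i$, which compresses the nine-factor product into the eight-factor alternating product $(W_1 W_2)^{4}$ asserted by each of the four parts.

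The main work is done by Theorem \ref{thm:triangular}; what remains is mechanical. The only point needing a sentence of commentary is the diagonal-absorption step, which relies on the special feature common to all four triangular types, namely that they each contain every diagonal matrix. I do not anticipate any serious obstacle beyond choosing the correct ordering of $W_1,W_2$ in each of the four parts so that the compressed product reads, respectively, $LULULULU$, $ULULULUL$, $TBTBTBTB$, and $BTBTBTBT$.
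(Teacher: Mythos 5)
Your approach—invoking the ``moreover'' clause of Theorem~\ref{thm:triangular} for each of the four choices of $(W_1,W_2)$ and then absorbing the diagonal factor $E$ into an adjacent triangular factor—is exactly what the paper intends, and since the paper states this corollary without written proof, making the absorption step explicit is the right thing to do. Two small cautions. First, Theorem~\ref{thm:triangular} as printed reads $M=A_1B_1A_2B_2EC_1C_2D_1D_2$ with $A_1,B_1,C_1,D_1\in W_1$ and $A_2,B_2,C_2,D_2\in W_2$, which is the pattern $W_1W_1W_2W_2\,E\,W_1W_2W_1W_2$ rather than your alternating $(W_1W_2)^2E(W_1W_2)^2$; this is almost certainly a labeling slip in the paper (the argument via Proposition~\ref{prop:surjective} produces the alternating form), and either pattern compresses to the stated eight factors, but you should not silently regularize it. Second, you justify the compression by asserting that each $W_i$ is closed under matrix multiplication. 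That holds for upper and lower triangular matrices, but for the ``top''/``bottom'' (anti-triangular) classes a product of two matrices of the same type generally is not of that type, so this claim is not safe to lean on. The compression only needs the much weaker observation that multiplying a matrix on the left or right by a diagonal matrix merely rescales rows or columns, and hence preserves whichever coordinate-subspace zero-pattern defines $W_i$; replacing the closure claim with this fact repairs the step and makes the argument go through uniformly in all four parts.
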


\subsection{QR decompositions}\label{subsec:QR} Assume $O(n)$ is the group of complex orthogonal matrices and let $U$ be
the space of upper triangular matrices. Since the tangent space of $O(n)$ at $I_n$ is simply the linear space of all $n\times n$ skew symmetric matrices \cite{FH} and both $O(n)$ and $U$ contain $I_n$ as a smooth point, we can apply Theorem \ref{thm:triangular} directly to $O(n)$ and $U$

\begin{corollary}\label{cor:QR}
Let $A$ be a generic $n\times n$ matrix. Then

\begin{enumerate}
\item There exist an orthogonal matrix $Q$, an upper triangular matrix $R$
such that
\[
A=RQ.
\]

\item There exist an orthogonal matrix $Q$, an upper triangular matrix $R$
such that
\[
A=QR.
\]

\item There exist an orthogonal matrix $Q$, a lower triangular matrix $S$ such
that
\[
A=QS.
\]

\item There exist an orthogonal matrix $Q$, a lower triangular matrix $S$ such
that
\[
A=SQ.
\]

\end{enumerate}
\end{corollary}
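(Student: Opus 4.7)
The plan is to apply Theorem \ref{thm:triangular} directly, with $W_1$ and $W_2$ chosen according to which of the four orderings is wanted. For parts (1) and (2), I would take $\{W_1,W_2\} = \{O(n), U\}$ where $U$ is the space of upper triangular matrices; for parts (3) and (4), I would take $\{W_1,W_2\} = \{O(n), L\}$ where $L$ is the space of lower triangular matrices. In each case the identity $I_n$ lies in both subvarieties and is a smooth point of each: $I_n$ is a smooth point of $O(n)$ because $O(n)$ is a smooth algebraic group, and it is a smooth point of $U$ (or $L$) because those are linear subspaces of $\mathbb{C}^{n\times n}$.

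The key step is to verify the tangent space hypothesis $T_{I_n}W_1 + T_{I_n}W_2 = \mathbb{C}^{n\times n}$. By the discussion of tangent spaces in Section \ref{sec:algebraic geometry}, $T_{I_n}O(n) = \mathfrak{o}(n)$ is the space of skew symmetric matrices, while the tangent space of a linear subspace at any point is the subspace itself, so $T_{I_n}U = U$ and $T_{I_n}L = L$. It therefore suffices to check that every $M\in \mathbb{C}^{n\times n}$ decomposes as $M = S + T$ with $S\in \mathfrak{o}(n)$ and $T\in U$ (and similarly for $L$). This is immediate: setting $T_{ii} = M_{ii}$ on the diagonal, $S_{ij} = -M_{ji}$ and $T_{ij} = M_{ij}+M_{ji}$ for $i<j$, and $S_{ij} = -S_{ji}$ for $i>j$ gives the desired decomposition. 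The analogous decomposition for $\mathfrak{o}(n) + L = \mathbb{C}^{n\times n}$ works by symmetry. A quick dimension count ($\binom{n}{2}+\binom{n}{2}+n = n^2$) confirms there are no surprises.

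With both hypotheses of Theorem \ref{thm:triangular} verified for each of the four pairings, the theorem produces the four decompositions. Concretely, parts (1) and (2) are obtained by taking $(W_1,W_2) = (U, O(n))$ and $(O(n), U)$ respectively in the theorem, and parts (3) and (4) by taking $(W_1,W_2) = (O(n), L)$ and $(L, O(n))$ respectively. The map $\rho_2$ is dominant in each case, which, unwinding definitions, is exactly the assertion that a generic matrix admits the corresponding decomposition.

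There is essentially no serious obstacle here: the content of the statement has been fully absorbed into Theorem \ref{thm:triangular}, and the only substantive task is the transversality check $\mathfrak{o}(n) + U = \mathbb{C}^{n\times n}$, which is the elementary linear algebra fact above. The one small subtlety worth flagging is that $O(n)$ is taken over $\mathbb{C}$ (as noted in the setup), so smoothness and the identification of the tangent space at $I_n$ with $\mathfrak{o}(n)$ must be interpreted in the algebraic category; this is standard and causes no difficulty.
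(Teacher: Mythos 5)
Your proposal is correct and follows essentially the same approach as the paper: apply Theorem \ref{thm:triangular} with $\{W_1,W_2\}$ chosen among $O(n)$, $U$, $L$, using that $T_{I_n}O(n)=\mathfrak{o}(n)$ and that $\mathfrak{o}(n)+U=\mathfrak{o}(n)+L=\mathbb{C}^{n\times n}$. The paper states this more tersely, but the explicit decomposition and dimension check you give are exactly the content being invoked.
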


\begin{corollary}
\label{various decomposition} Let $A$ be an $n\times n$ matrix. Then

\begin{enumerate}
\item There exist orthogonal matrices $Q_{1},Q_{2},Q_{3},Q_{4}$ and upper
triangular matrices $R_{1},R_{2},R_{3},R_{4}$ such that
\[
A=R_{1}Q_{1}R_{2}Q_{2}R_{3}Q_{3}R_{4}Q_{4}.
\]

\item There exist orthogonal matrices $Q_{1},Q_{2},Q_{3},Q_{4}$ and upper
triangular matrices $R_{1},R_{2},R_{3},R_{4}$ such that
\[
A=Q_{1}R_{1}Q_{2}R_{2}Q_{3}R_{3}Q_{4}R_4.
\]

\item There exist orthogonal matrices $Q_{1},Q_{2},Q_{3},Q_{4}$ and lower
triangular matrices $S_{1},S_{2},S_{3},S_{4}$ such that
\[
A=Q_{1} S_{1} Q_{2} S_{2} Q_{3} S_{3} Q_{4} S_{4}.
\]

\item There exist orthogonal matrices $Q_{1},Q_{2},Q_{3},Q_{4}$ and lower
triangular matrices $S_{1},S_{2},S_{3},S_{4}$ such that
\[
A=S_{1}Q_{1}S_{2}Q_{2} S_{3}Q_{3}S_{4}Q_{4}.
\]
\end{enumerate}
\end{corollary}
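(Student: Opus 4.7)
The plan is to deduce Corollary 4.5 from the generic QR decomposition (Corollary 4.4 above, \texttt{cor:QR}) by invoking the topological-group argument behind Proposition 3.6 and Proposition 3.7, combined with the elementary fact that every matrix admits a decomposition $PDQ$ with $P,Q\in\operatorname{GL}_n(\mathbb{C})$ and $D$ diagonal.

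I would handle case (2), $A=Q_1R_1Q_2R_2Q_3R_3Q_4R_4$, in detail, since the remaining three cases are entirely parallel. First, by Corollary \ref{cor:QR}(2), the multiplication map $\rho_2\colon O(n)\times U\to\mathbb{C}^{n\times n}$, $(Q,R)\mapsto QR$, is dominant, where $U$ denotes the space of upper triangular matrices. Since the image $\rho_2(O(n)\times U)$ contains $I_n=I_n\cdot I_n$ and, by Theorem \ref{thm:open mapping}, contains an open dense subset of $\mathbb{C}^{n\times n}$, it contains an open dense neighborhood $V$ of $I_n$ in $\operatorname{GL}_n(\mathbb{C})$. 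Proposition \ref{prop:group generation} applied to $G=\operatorname{GL}_n(\mathbb{C})$ then gives $\operatorname{GL}_n(\mathbb{C})\subseteq V\cdot V$, i.e.\ every invertible matrix factors as $Q_1R_1Q_2R_2$.

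For an arbitrary $A\in\mathbb{C}^{n\times n}$, invoke the elementary Smith/Gaussian reduction $A=PDQ_0$ with $P,Q_0\in\operatorname{GL}_n(\mathbb{C})$ and $D$ diagonal (this is the same fact used in the proof of Proposition \ref{prop:surjective}). Decompose $P=Q_1R_1Q_2R_2$ and $Q_0=Q_3R_3Q_4R_4$ via the invertible case, and absorb the diagonal $D$ into the adjacent upper triangular factor:
\[
A \;=\; Q_1R_1Q_2R_2\,D\,Q_3R_3Q_4R_4 \;=\; Q_1R_1Q_2(R_2D)Q_3R_3Q_4R_4,
\]
where $R_2':=R_2D$ is again upper triangular. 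This completes case (2).

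For the remaining cases one simply chooses, in each pattern, a neighboring triangular slot into which $D$ can be absorbed while preserving the alternating structure: in case (1), $A=R_1Q_1R_2Q_2\cdot D\cdot R_3Q_3R_4Q_4=R_1Q_1R_2Q_2(DR_3)Q_3R_4Q_4$; in cases (3) and (4), the same trick works with $S_i$ (lower triangular) in place of $R_i$, absorbing $D$ into the appropriate $S$-factor on the left or right. In every case the product of $D$ with an upper (resp.\ lower) triangular matrix remains upper (resp.\ lower) triangular, so no factor leaves its prescribed variety. There is no real obstacle; the only thing to check is the bookkeeping that picks, for each of the four orderings, the correct side on which to multiply $D$ into a triangular neighbor so that the alternating $Q,R$ (or $Q,S$) pattern is maintained.
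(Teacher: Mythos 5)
Your argument is correct and spells out reasoning the paper leaves implicit. The corollary is stated without proof; the ``moreover'' clause of Theorem~\ref{thm:triangular}, which it tacitly invokes via Proposition~\ref{prop:surjective}, produces \emph{nine} factors (four from $O(n)$, four from the triangular space, and a separate middle diagonal $E$), whereas the corollary asserts only eight. The missing step is precisely the absorption of $E$ into an adjacent triangular slot, which you carry out explicitly and which is necessary for the stated factor count. The paper's subsequent remark also notes the corollary is redundant, since every matrix already admits a QR (resp.\ RQ, QS, SQ) decomposition, yielding a one-line proof by padding with identities; your proof is the uniform one that stays inside the dominance/open-mapping framework rather than appealing to the classical result, so it would carry over verbatim to the LU corollary that precedes it. One minor slip: Theorem~\ref{thm:open mapping} furnishes an open dense subset of $\rho_2(O(n)\times U)$, but nothing guarantees that subset is a \emph{neighborhood of $I_n$}, as you assert. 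This is harmless, because the conclusion $G=U\cdot U$ of Proposition~\ref{prop:group generation} holds for any open dense $U\subseteq G$ even without assuming $I\in U$: given $g\in G$, both $U$ and $gU^{-1}$ are open and dense, hence meet, and taking $h\in U\cap gU^{-1}$ gives $g=h\cdot(h^{-1}g)\in U\cdot U$.
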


\begin{remark}
Corollary \ref{various decomposition} is redundant because it is known that every $n\times n$ matrix admit a QR decomposition. Furthermore, this implies that generic matrices for QR decomposition in Corollary \ref{cor:QR} are actually all matrices. Combining this fact with Remark \ref{remark:LU}, we see that generic matrices are not necessarily the same for different matrix decompositions.
\end{remark}

One might ask whether or not the same method applies to the SVD, but
unfortunately, since the SVD involves complex conjugation of a matrix, which
makes the decomposition non-algebraic, we are not allowed to use the same
argument to recover the SVD even for generic matrices.

\section{Matrix decomposition for linear spaces}\label{sec:linear spaces}
\subsection{Bidiagonal decomposition}\label{subsec:bidiagonal}
Let $A = (a_{ij})$ be an $n\times n$ matrix. We say that $A$ is a \textit{$k$-diagonal matrix} if 
\[
a_{ij} = 0 ~\text{if}~\lvert i-j\rvert \ge k.
\]
In particular $1$-diagonal matrices are simply diagonal matrices, $2$-diagonal matrices are called bi-diagonal matrices. For example, a $3\times 3$ bidiagonal matrix is of the form 
\[
\begin{bmatrix}
a & b & 0\\
c & d & e \\
0 & f & g
\end{bmatrix}.
\]
An \textit{upper $k$-diagonal matrix} $A = (a_{ij})$ is a $k$-diagonal matrix with further restriction
\[
a_{ij} = 0 ~\textit{if}~ i-j\ge 1.
\]
A $3\times 3$ upper bidiagonal matrix is of the form
\[
\begin{bmatrix}
a & b & 0\\
0 & c & d \\
0 & 0 & e
\end{bmatrix}.
\]
A matrix $A$ is called \textit{lower $k$-diagonal} if $A^\mathsf{T}$ is upper $k$-diagonal. We denote the set of all $k$-diagonal matrices by $D_k$, the set of all upper $k$-diagonal matrices by $D_{k,\ge 0}$ and the set of all lower $k$-diagonal matrices by $D_{k,\le 0}$. 

\begin{lemma}\label{lemma: lower/upper k-diagonal}
Let $2\le k \le n$ be an integer. A generic $n\times n$ upper (resp. lower) $k$-diagonal matrix is a product of $k-1$ upper (resp. lower) bidiagonal matrices. In particular, A generic $n\times n$ upper (resp. lower) matrix is a product of $n$ upper (resp. lower) bidiagonal matrices.
\end{lemma}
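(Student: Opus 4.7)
I will prove the upper version; the lower case follows by transposition. Since the product of $k-1$ upper bidiagonal matrices has bandwidth at most $k-1$, we have a well-defined polynomial map
\[
\rho_{k-1}: (D_{2,\geq 0})^{k-1} \to D_{k,\geq 0},
\]
and the strategy of Section \ref{subsec:strategy} reduces the problem to exhibiting a basepoint at which $d\rho_{k-1}$ is surjective, after which Proposition \ref{prop:dominant} yields dominance. I will argue by induction on $k$.

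The base case $k = 2$ is trivial, as $\rho_1$ is the identity on $D_{2,\geq 0}$. For the inductive step, assume that $\rho_{k-2}$ is dominant onto $D_{k-1,\geq 0}$, so that by Proposition \ref{prop:dominant} together with upper semicontinuity of rank there is a nonempty Zariski-open subset $U\subset (D_{2,\geq 0})^{k-2}$ on which $d\rho_{k-2}$ is surjective. Choose $(A_1^*,\dots,A_{k-2}^*)\in U$ subject to the extra open condition that $C_0 := A_1^* \cdots A_{k-2}^*$ satisfies $(C_0)_{i,i+k-2}\neq 0$ for $i = 1,\dots,n-k+1$; this intersection is nonempty because the explicit specialization $A_1^*=\cdots=A_{k-2}^*=I_n+E$, with $E=\sum_{i=1}^{n-1}E_{i,i+1}$, gives $C_0 = (I_n+E)^{k-2}$ and hence $(C_0)_{i,i+k-2} = \binom{k-2}{k-2} = 1$. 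At the basepoint $a^* := (A_1^*,\dots,A_{k-2}^*,I_n)$, formula \eqref{eqn:differential} collapses to
\[
d\rho_{k-1}|_{a^*}(X_1,\dots,X_{k-1}) \;=\; d\rho_{k-2}|_{(A_1^*,\dots,A_{k-2}^*)}(X_1,\dots,X_{k-2}) \;+\; C_0\, X_{k-1},
\]
and by our choice of $a^*$ the first summand sweeps out all of $D_{k-1,\geq 0}$ as the $X_i$'s vary.

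It therefore suffices to show that $D_{k-1,\geq 0} + C_0 \cdot D_{2,\geq 0} = D_{k,\geq 0}$. The only entries of $D_{k,\geq 0}$ not already present in $D_{k-1,\geq 0}$ lie on the $(k-1)$-st superdiagonal, so I need to check that $C_0\cdot D_{2,\geq 0}$ surjects onto this superdiagonal. The bandwidth constraints on $C_0$ and on $X\in D_{2,\geq 0}$ force the single surviving term $l=i+k-2$ in the product, giving
\[
(C_0 X)_{i,i+k-1} \;=\; (C_0)_{i,i+k-2}\, X_{i+k-2,i+k-1}.
\]
The coefficients $(C_0)_{i,i+k-2}$ are nonzero by construction, and the entries $X_{i+k-2,i+k-1}$ are independent free parameters of $X$, so any target values on the $(k-1)$-st superdiagonal can be realized. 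The inductive step is complete, and Proposition \ref{prop:dominant} finishes the proof; the ``in particular'' statement is the case $k = n$, i.e., $D_{n,\geq 0}$ is the full space of upper triangular matrices.

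The main obstacle, from a high-level perspective, is arranging the basepoint so that the inductive differential and the residue term $C_0\cdot D_{2,\geq 0}$ are jointly surjective. Both conditions are Zariski-open in the parameter space, but the second requires a concrete nonvanishing witness on the $(k-2)$-nd superdiagonal of $C_0$; once the unipotent specialization $A_i^* = I_n+E$ is observed to supply this witness, the rest is routine band-structure bookkeeping.
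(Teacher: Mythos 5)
Your proof is correct and follows essentially the same route as the paper's: both argue by induction on $k$ and evaluate $d\rho$ at a basepoint with one identity factor, then exploit band structure to see that the new $(k-1)$-st superdiagonal can be hit independently and absorbed alongside the inductively surjective part. The only cosmetic differences are the slot in which the identity is placed (the paper puts it first and uses a single-superdiagonal witness $B=(\delta^i_{i+k-1})$ for the other factor; you put it last and use the unipotent witness $C_0=(I_n+E)^{k-2}$), and the paper phrases the step as dominance of a two-stage composite rather than a direct differential computation.
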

\begin{proof}
We will prove the lemma for upper triangular matrix case. For a positive integer $2\le k$, we recall that $D_{k,\ge 0}$ is the space of upper $k$-diagonal matrices. It is clear that the product of $k-1$ bidiagonal matrices is a $k$ diagonal matrix. We want to show that the map
\[
\rho_{k-1}: \underset{k-1\text{ copies }} {\underbrace{D_{2,\ge 0}%
\times\cdots\times D_{2,\ge 0}}} \to D_{k,\ge 0}
\]
defined by matrix multiplication is dominant. We proceed by induction on $k$. When $k=2$, it is clear that $\rho_1$ is dominant. Assume that the map $\rho_{k-1}$ is dominant where $k-1\le n-1$ then we need to prove that $\rho_{k}$ is also dominant. To this end we can factor the map $\rho_k$ as 
\[
\rho_k: D_{2,\ge 0} \times \underset{k-1\text{ copies }} {\underbrace{( D_{2,\ge 0} \times \cdots\times D_{2,\ge 0})}} \xrightarrow{(\operatorname{Id}_n,\rho_{k-1})}  D_{2,\ge 0} \times D_{k-1,\ge 0} \xrightarrow{\rho} D_{k+1,\ge 0},
\]
where $\operatorname{Id}_n$ is the identity map on $D_{2,\ge 0}$ and $\rho$ is the map defined by multiplication of two matrices. By the induction hypothesis, we see that $(\operatorname{Id},\rho_{k-1})$ is dominant hence it is sufficient to show that $\rho$ is dominant. Now to see that $\rho$ is dominant, we calculate the differential of $\rho$. By formula \eqref{eqn:differential} the differential of $\rho$ at $(A,B)$ is given by
\[
d\rho|_{(A,B)}(X,Y)=XB+AY,~\text{for all}~(X,Y)\in D_{2,\ge0}\times
D_{k-1,\ge0}.
\]
On the other hand, given any $C\in D_{k+1,\ge0}$, we can write $C=C^{\prime
}+C^{\prime\prime}$ where $C^{\prime}\in D_{k,\ge0}$ and $C^{\prime\prime}\in
D_{k+1,\ge0}-D_{k,\ge0}$. We take $B=(\delta^{i}_{i+k-1})\in D_{k,\ge 0}$, where
$\delta^{i}_{j}$ is the Kronecker delta and $A=\operatorname{Id}_n$, then one can easily
find $(X,Y)\in D_{2,\ge0}\times D_{k,\ge0}$ such that
\[
XB=C^{\prime\prime}, Y=C^{\prime}.
\]
This implies that $d\rho|_{(A,B)}$ is surjective, and hence
$\rho$ is dominant by Proposition \ref{prop:dominant}.
\end{proof}

\begin{corollary}\label{cor:bidiagonal}
Every invertible upper (resp. lower) triangular matrix is a product of $2n$ upper (resp. lower) bidiagonal matrices.
\end{corollary}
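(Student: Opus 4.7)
The plan is to upgrade the generic statement of Lemma \ref{lemma: lower/upper k-diagonal} to a statement about every invertible upper triangular matrix by paying a factor of two in the number of factors, in the same spirit as Proposition \ref{prop:surjective}. Since for $k = n$ the condition $\lvert i-j\rvert \ge n$ is never satisfied, the space $D_{n,\ge 0}$ coincides with the affine space of all upper triangular $n\times n$ matrices, and Lemma \ref{lemma: lower/upper k-diagonal} says that $\rho_n \colon (D_{2,\ge 0})^n \to D_{n,\ge 0}$ is dominant. By the open mapping theorem (Theorem \ref{thm:open mapping}), its image therefore contains a Zariski-open dense subset $V \subseteq D_{n,\ge 0}$.

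Next, I would introduce the subgroup $T \subset D_{n,\ge 0}$ of invertible upper triangular matrices. It is the principal open subset cut out by $\prod_{i=1}^n a_{ii} \neq 0$, hence open and dense in the irreducible variety $D_{n,\ge 0}$; consequently $T$ is itself an irreducible algebraic variety, and is a topological group under matrix multiplication. The intersection $U \coloneqq V \cap T$ is then open and dense in $T$, and every element of $U$ is by construction a product of $n$ upper bidiagonal matrices.

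The final step is a group-generation argument: for any $g \in T$, both $U$ and $gU^{-1}$ are open dense subsets of $T$, because left translation by $g$ and the inversion map are homeomorphisms of $T$. Irreducibility of $T$ forces $U \cap gU^{-1}$ to be nonempty, so there exist $u,v \in U$ with $g = uv$; writing each of $u$ and $v$ as a product of $n$ upper bidiagonals yields the required factorization of $g$ into $2n$ upper bidiagonal factors. The lower triangular case follows by transposition. The one minor obstacle is that Proposition \ref{prop:group generation} is stated under the extra hypothesis that the open dense set contain the identity, which we cannot guarantee a priori for $U$; however, the proof of that proposition only relies on the nonempty intersection of two open dense sets in an irreducible topological group, so it applies verbatim in the present situation (alternatively, one can translate $U$ by $u_0^{-1}$ for any fixed $u_0 \in U$ to put $I_n$ inside an open dense set and invoke Proposition \ref{prop:group generation} as stated, at the cost of a couple of extra bookkeeping factors that can be absorbed into the $2n$ budget).
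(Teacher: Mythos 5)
Your proposal is correct and follows essentially the same route as the paper: dominance of $\rho_n$ from Lemma~\ref{lemma: lower/upper k-diagonal}, the open mapping theorem to get an open dense $V\subseteq D_{n,\ge 0}$, restriction to the group $T$ of invertibles, and the Borel-type generation argument of Proposition~\ref{prop:group generation} (the paper's own proof is exactly this, compressed into one sentence, and you are right that it silently skips the identity-element hypothesis, which your intersection argument handles cleanly). One small caveat on your parenthetical alternative: translating by $u_0^{-1}$ does put $I_n$ in the open set, but then $g = u_1 u_0^{-1} u_2 u_0^{-1}$ and the factor $u_0^{-1}$ is not a priori a product of few bidiagonals, so the extra cost cannot in fact be absorbed into the $2n$ budget; the primary argument via nonempty intersection of Zariski-open sets in the irreducible variety $T$ is the one that actually closes the proof.
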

\begin{proof}
Since a generic upper triangular matrix is a product of $n$ upper bidiagonal matrices, the corollary follows from Proposition \ref{prop:group generation}.
\end{proof}

\begin{proposition}\label{prop:bidiagonal}
A generic $n\times n$ matrix can be decomposed into a product of $2n$ tridiagonal matrices. An invertible $n\times n$ matrix is a product of $4n$ bidiagonal matrices.
\end{proposition}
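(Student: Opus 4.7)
The plan is to reduce both parts to Lemma \ref{lemma: lower/upper k-diagonal} by using the LU decomposition for generic matrices (Corollary \ref{cor:triangular}(1)) to split the product into a lower-triangular half and an upper-triangular half, and then to bootstrap from the generic statement to the invertible one via Proposition \ref{prop:group generation}.

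For the first claim, I would consider the multiplication map
\[
\rho_{2n}\colon \underbrace{D_{2,\le 0}\times\cdots\times D_{2,\le 0}}_{n\text{ copies}}\;\times\;\underbrace{D_{2,\ge 0}\times\cdots\times D_{2,\ge 0}}_{n\text{ copies}}\longrightarrow \mathbb{C}^{n\times n}
\]
and factor it as $\rho_{2n}=\mu\circ(\rho_n^{L}\times\rho_n^{U})$, where $\rho_n^{L}$ and $\rho_n^{U}$ are the $n$-fold bidiagonal multiplication maps of Lemma \ref{lemma: lower/upper k-diagonal} (landing in $D_{n,\le 0}$ and $D_{n,\ge 0}$ respectively) and $\mu\colon D_{n,\le 0}\times D_{n,\ge 0}\to\mathbb{C}^{n\times n}$ is matrix multiplication. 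By Lemma \ref{lemma: lower/upper k-diagonal}, both $\rho_n^{L}$ and $\rho_n^{U}$ are dominant, hence so is their Cartesian product; and by Corollary \ref{cor:triangular}(1) the generic LU decomposition is precisely the statement that $\mu$ is dominant. Since a composition of dominant polynomial maps between irreducible varieties is dominant (any proper closed subvariety downstairs would pull back to a proper closed subvariety upstairs), $\rho_{2n}$ itself is dominant, proving that a generic $n\times n$ matrix is a product of $2n$ bidiagonal matrices and, a fortiori, of $2n$ tridiagonal matrices.

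For the second claim, let $V$ denote the image of $\rho_{2n}$. By Theorem \ref{thm:open mapping} applied to the dominant map $\rho_{2n}$, the set $V$ contains a Zariski open dense subset of $\mathbb{C}^{n\times n}$, hence an open dense subset of $\operatorname{GL}_n(\mathbb{C})$; moreover $I_n\in V$ since the identity is itself bidiagonal. Proposition \ref{prop:group generation}, or concretely the density argument that for any $g\in \operatorname{GL}_n(\mathbb{C})$ the open dense sets $U$ and $gU^{-1}$ inside $\operatorname{GL}_n(\mathbb{C})$ must meet, then gives $\operatorname{GL}_n(\mathbb{C})\subseteq V\cdot V$, so every invertible matrix is a product of $4n$ bidiagonal matrices.

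The only real subtlety I want to flag is that a naive attempt to prove dominance of $\rho_{2n}$ by evaluating the Jacobian at the all-identity point fails: $T_{I_n}D_{2,\le 0}+T_{I_n}D_{2,\ge 0}$ is only the space of tridiagonal matrices and does not span $\mathbb{C}^{n\times n}$. One really does need the two-stage factorization above, or equivalently the carefully chosen non-identity base points already used inside the proof of Lemma \ref{lemma: lower/upper k-diagonal}; once that is in hand, everything else is formal.
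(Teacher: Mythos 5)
Your proof is correct and takes essentially the same route as the paper: factor a generic matrix via LU (Corollary \ref{cor:triangular}), apply Lemma \ref{lemma: lower/upper k-diagonal} to each triangular factor, and then pass from the generic to the invertible statement using Proposition \ref{prop:group generation} together with the open mapping theorem. Your explicit factorization $\rho_{2n}=\mu\circ(\rho_n^L\times\rho_n^U)$ and the remark that the all-identity base point only spans tridiagonal matrices are welcome clarifications but do not change the substance of the argument.
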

\begin{proof}
By Lemma \ref{lemma: lower/upper k-diagonal} a generic upper (resp. lower) triangular matrix is a product of $n$ upper (resp. lower) bidiagonal matrices. A generic $n\times n$ matrix has an LU decomposition. Hence we see that a generic matrix can be decomposed as a product of $n$ upper and $n$ lower bidiagonal matrices. The last statement follows from \ref{prop:group generation}.
\end{proof}

\begin{theorem}\label{thm:bidiagonal} 
Let $r$ be the smallest number such that every $n\times n$ matrix is a product of $r$ bidiagonal matrices. Then
\[
n-1\le r\le 8n.
\]
\end{theorem}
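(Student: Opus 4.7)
The strategy is to prove the two bounds independently: the lower bound from a bandwidth-propagation rule for matrix products, and the upper bound from Proposition \ref{prop:bidiagonal} combined with a standard $PDQ$ reduction from the general case to the invertible case.

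\textbf{Lower bound.} First I would establish the elementary bandwidth rule: if $A$ is $k$-diagonal and $B$ is $\ell$-diagonal, then $AB$ is $(k+\ell-1)$-diagonal. This is immediate from $(AB)_{ij} = \sum_h A_{ih} B_{hj}$ together with the triangle inequality $|i-j| \le |i-h| + |h-j|$: a nonzero summand requires $|i-h| < k$ and $|h-j| < \ell$, forcing $|i-j| < k+\ell-1$. Iterating, any product of $r$ bidiagonal (i.e.\ $2$-diagonal) matrices is $(r+1)$-diagonal. Hence for $r \le n-2$ every such product vanishes at position $(1,n)$, so the elementary matrix $E_{1n}$ with a single nonzero entry in that corner is not expressible as a product of $\le n-2$ bidiagonal matrices. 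This forces $r \ge n-1$.

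\textbf{Upper bound.} I would reduce the general case to the invertible case already handled by Proposition \ref{prop:bidiagonal}. Every $A \in \mathbb{C}^{n\times n}$ admits a factorization $A = PDQ$ with $P, Q \in \operatorname{GL}_n(\mathbb{C})$ and $D$ diagonal, obtained by row- and column-reducing $A$ to its Smith normal form $D = \operatorname{diag}(1,\dots,1,0,\dots,0)$ of rank $\operatorname{rank}(A)$. By Proposition \ref{prop:bidiagonal}, each of $P$ and $Q$ can be written as a product of $4n$ bidiagonal matrices, say $P = B_1 \cdots B_{4n}$ and $Q = C_1 \cdots C_{4n}$. A direct entrywise check shows that the product of a bidiagonal matrix with a diagonal matrix is again bidiagonal, so $B_{4n}D$ is bidiagonal and
\[
A = B_1 \cdots B_{4n-1}\,(B_{4n}D)\,C_1 \cdots C_{4n}
\]
exhibits $A$ as a product of $8n$ bidiagonal matrices.

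\textbf{Main obstacle.} Both halves are essentially immediate given the preceding results of the paper, so there is no significant technical obstruction. The genuinely interesting content hidden in the statement is the rather large gap between $n-1$ and $8n$: the lower bound is driven purely by the support pattern of products, whereas the upper bound absorbs the inefficiency of the dominant-to-surjective passage (Proposition \ref{prop:surjective}) used inside Proposition \ref{prop:bidiagonal}, plus the additional factor of two introduced by the $PDQ$ split. Narrowing this gap, if possible, would presumably require a genuinely constructive bidiagonal decomposition rather than the generic-plus-topology machinery employed here.
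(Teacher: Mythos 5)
Your proof is correct and follows the same route as the paper: the bandwidth-growth rule for products of banded matrices gives $r \ge n-1$, and the $A = PDQ$ reduction together with Proposition~\ref{prop:bidiagonal} applied to $P$ and $Q$ gives the upper bound. Your explicit absorption of $D$ into a neighboring bidiagonal factor yields exactly $8n$ and is in fact a touch more careful than the paper's count, which as written treats $D$ as a separate factor.
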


\begin{proof}
Notice that every matrix $A$ can be written as 
\[
A = P D Q,
\]
where $P,Q$ are invertible and $D$ is diagonal. By Proposition \ref{prop:bidiagonal} we see that $P,Q$ are products of $8n$ bidiagonal matrices, respectively. Since diagonal matrices are also bidiagonal, we see that every $n\times n$ matrix is a product of $8n$ bidiagonal matrices. This gives the upper bound of $r$. For the lower bound, we simply notice that a product of $k-1$ bidiagonal matrices is $k$-diagonal hence $r$ must be at least $n-1$.
\end{proof}

Since $\dim D_{2}=3n-2$, by Corollary \ref{cor:dimension counting 4} the expected value of $r$ is $\lceil \frac{n+1}{3} \rceil$, while the lower bound of $r$ is $n-1$. This shows that Proposition \ref{thm:bidiagonal} gives us an example that the expected value may not be achieved. Roughly speaking, this is because entries on the diagonal of a tridiagonal matrix do not contribute to expand the product. To be more precise, if $X$ is a diagonal matrix and $Y$ is a bidiagonal matrix then $XY$ is still a bidiagonal matrix.

\subsection{Skew symmetric decomposition}\label{subsec:skew symmetric}
We consider skew symmetric matrix decomposition problem in this section. An $n\times n$ skew symmetric matrix $A$ is defined by the condition
\[
A = -A^{\mathsf{T}}.
\]
We denote the space of all $n\times n $ skew symmetric matrices by $\Lambda_n$. It is clear that $\Lambda_n$ is a linear subspace of $\mathbb{C}^{n\times n}$ and
\[
\dim (\Lambda_n) = \binom{n}{2}.
\]
On the one hand, since $\lceil \frac{n^2-1}{\binom{n}{2}-1} \rceil=3$ if $n\ge 3$, we see that if the map 
\[
\rho_r: \underset{r\text{ copies }}{\underbrace{\Lambda_n\times \cdots \times \Lambda_n}}\to \mathbb{C}^{n\times n}
\]
is dominant then $r$ is at least three. On the other hand, from the definition one can see that for any $A\in \Lambda_n$ we have
\[
\operatorname{det}(A) = \operatorname{det} (A^{\mathsf{T}} ) = (-1)^n \operatorname{det}(A).
\]
In particular if $n$ is odd, we obtain $\operatorname{det}(A)=0$. This implies that when $n$ is odd, the map $\rho$ can never be dominant, regardless how large $r$ is. However, when $n$ is odd, we can expect that 
\[
\rho_r: \underset{r\text{ copies }}{\underbrace{\Lambda_n\times \cdots \times \Lambda_n}}\to \operatorname{DET_n}
\]
is dominant for $r\ge 3$, where $\operatorname{DET_n}$ is the hypersurface of all $n\times n$ matrices whose determinants are zero.

\begin{proposition}\label{prop:skewsymmetric}
We have the following two cases:
\begin{enumerate}
\item $n$ is even. A generic $n\times n$ matrix is a product of $r$ skew symmetric matrices for $(n,r)$ where
\begin{enumerate}
\item $n\ge 8$, $r\ge 3$ or 
\item $n=6, r\ge 4$ or
\item $n=4, r\ge 5 $.
\end{enumerate}
\item $n$ is odd. A generic $n\times n$ matrix is a product of $r$ skew symmetric matrices whose determinants are zero for $(n,r)$ where 
\begin{enumerate}
\item $n\ge 5$, $r\ge 3$ or 
\item $n=3, r\ge 4$.
\end{enumerate}
\end{enumerate}
\end{proposition}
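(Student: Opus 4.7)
The plan is to apply the strategy of Section~\ref{subsec:strategy}. By Proposition~\ref{prop:dominant}, it suffices to produce, for each pair $(n,r)$ in the statement, a point $a = (A_1, \ldots, A_r) \in \Lambda_n^r$ at which the differential
\[
d\rho_r|_a(X_1, \ldots, X_r) = \sum_{i=1}^r A_1 \cdots A_{i-1} X_i A_{i+1} \cdots A_r
\]
has rank equal to $n^2$ in the even case (target $\mathbb{C}^{n\times n}$) and $n^2 - 1$ in the odd case (target $\operatorname{DET}_n$). Moreover it suffices to handle the minimum value of $r$ in each subcase: if $\rho_r$ is dominant, then $\operatorname{Image}(\rho_{r+1}) \supseteq \operatorname{Image}(\rho_r) \cdot A$ for any fixed invertible $A \in \Lambda_n$, which is still open and dense, so $\rho_{r+1}$ is dominant as well.

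For the main subcases --- even $n \geq 8$ with $r = 3$, and odd $n \geq 5$ with $r = 3$ --- I would take $A_1, A_2, A_3$ to be explicit skew symmetric matrices, for instance block-diagonal with $2 \times 2$ blocks $\bigl(\begin{smallmatrix} 0 & \lambda \\ -\lambda & 0 \end{smallmatrix}\bigr)$ having generically distinct parameters (together with a single $1 \times 1$ zero block in the odd case, so that $A_1 A_2 A_3 \in \operatorname{DET}_n$). To verify that $d\rho_3|_a$ has maximal rank I would pass to the dual description: a matrix $M \in \mathbb{C}^{n\times n}$ annihilates $\operatorname{Image}(d\rho_3|_a)$ under the trace pairing if and only if $A_2 A_3 M$, $A_3 M A_1$, and $M A_1 A_2$ are all symmetric, which (using $A_i^T = -A_i$) yields
\[
A_2 A_3 M = M^T A_3 A_2, \qquad A_3 M A_1 = A_1 M^T A_3, \qquad M A_1 A_2 = A_2 A_1 M^T.
\]
With the block-diagonal ansatz, these equations decouple across $2 \times 2$ blocks of $M$ and reduce to elementary linear algebra; I expect the only solution to be $M = 0$ in the even case, and a one-dimensional space proportional to the conormal direction $\operatorname{adj}(A_1 A_2 A_3)^T$ to $\operatorname{DET}_n$ in the odd case, giving the surjectivity onto $T_{A_1 A_2 A_3}\operatorname{DET}_n$.

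For the remaining subcases --- $n = 3$ with $r \geq 4$, $n = 6$ with $r \geq 4$, and $n = 4$ with $r \geq 5$ --- I would augment the construction by appending additional skew symmetric factors and re-verify maximality of the differential by the analogous (longer) dual-equation computation. The main obstacle will lie precisely in these small-$n$ cases: for $n = 3$ the lower bound $r \geq 4$ is already forced by Corollary~\ref{cor:dimension counting 4} since $3\binom{3}{2} - 2 = 7 < 8 = \dim \operatorname{DET}_3$, whereas for $n = 4, 6$ the higher thresholds must reflect structural obstructions (not visible to naive dimension counting) coming from the small size of $\Lambda_n$ relative to $n^2$ and the resulting degeneracies among the dual equations above. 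By contrast, the ample case $n \geq 8$ should be the cleanest, since the slack in dimension makes the block-decoupling strategy robust.
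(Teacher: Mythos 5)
Your plan to exhibit a single base point at which the differential $d\rho_3|_a$ is surjective is the right template, but the base point you propose does not work, and fixing it is precisely where the substance of the paper's argument lies.

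Over $\mathbb{C}$ every $2\times 2$ skew symmetric matrix is a scalar multiple of $J_2 = \bigl(\begin{smallmatrix} 0 & 1 \\ -1 & 0 \end{smallmatrix}\bigr)$, so your block-diagonal $A_1,A_2,A_3$ with $2\times 2$ blocks $\lambda_{i,k}J_2$ all commute, and each product $A_iA_j$ is block-diagonal with \emph{scalar} $2\times 2$ blocks $-\lambda_{i,k}\lambda_{j,k}I_2$. Now look at the $(k,k)$ diagonal $2\times 2$ block of $d\rho_3|_a(X_1,X_2,X_3)$. Since each $A_iA_j$ is block-diagonal, only the $(k,k)$ blocks $(X_i)_{k,k}=x_iJ_2$ of the tangent vectors contribute, and a short computation (using $J_2^3=-J_2$) gives
\[
\bigl(d\rho_3|_a(X_1,X_2,X_3)\bigr)_{k,k}
= -\bigl(\lambda_{2,k}\lambda_{3,k}x_1 + \lambda_{1,k}\lambda_{3,k}x_2 + \lambda_{1,k}\lambda_{2,k}x_3\bigr)\,J_2 ,
\]
i.e.\ each diagonal block of the image is forced to be a multiple of $J_2$. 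Thus the image has codimension at least $3\cdot n/2$ in $\mathbb{C}^{n\times n}$ (for $n=8$ this already kills $12$ of the $64$ dimensions), so $d\rho_3|_a$ is nowhere near surjective at your chosen $a$, and the annihilator you compute via the trace pairing is certainly not $\{0\}$. The structural problem is that the uniform $2\times 2$ ansatz places all three factors inside a commutative subalgebra, which is exactly the degeneracy your decoupling heuristic cannot see. For the same reason, ``appending more factors'' of the same commuting shape cannot repair the small-$n$ cases.

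The paper avoids this by \emph{not} using a recursive all-small-block decomposition. For the finitely many cases $n\le 14$ even (resp.\ $n\le 11$ odd) it simply verifies dominance or its failure by symbolic computation in Macaulay2; this is how the exceptional thresholds for $n=4,6$ (and the base cases $n=8,10,12,14$; $n=5,7,9,11$) are established, and these are genuine computational facts, not consequences of a dimension count. Then for $n\ge 16$ even it takes $a=(A_1,A_2,A_3)$ block-diagonal with one $(n-8)\times(n-8)$ block and one $8\times 8$ block, chosen so that the corresponding $\rho_3^{(n-8)}$ and $\rho_3^{(8)}$ already have surjective differentials, and checks that the off-diagonal rectangular blocks of $d\rho_3|_a$ are trivially surjective; the argument recurses on $n$, bottoming out at the computer-verified $8\le n\le 14$. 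The odd case is the same with block sizes $n-5$ and $5$, with target $\operatorname{DET}_n$. The point is that one of the blocks has to be large enough (at least $8$, resp.\ $5$) to carry a verified surjective base case; you cannot shrink both blocks to size $2$. One further small gap: your step ``$\rho_r$ dominant $\Rightarrow \rho_{r+1}$ dominant by right-multiplying by a fixed invertible $A\in\Lambda_n$'' is fine for even $n$ but fails as stated for odd $n$, where every skew symmetric matrix is singular and $U\cdot A$ lands in a proper subvariety of $\operatorname{DET}_n$; one needs a slightly different argument there (e.g.\ take a generic $A_{r+1}$ and observe the rank of the differential is nondecreasing).
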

Again, we consider the map 
\begin{align*}
\rho_r: &\underset{r\text{ copies }}{\underbrace{\Lambda_n\times \cdots \times \Lambda_n}}\to \mathbb{C}^{n\times n},~\text{when $n$ is even, and}\\
\rho_r: &\underset{r\text{ copies }}{\underbrace{\Lambda_n\times \cdots \times \Lambda_n}}\to \operatorname{DET_n}, ~\text{when $n$ is odd}.
\end{align*}

\begin{example}
Using Macalay2 \cite{M2} we can calculate the dimension $d$ of the closure of the image of $\rho$ for small $n$ and $r$, we list some results
\begin{enumerate}
\item $n=2$, $d=1$ for any $r$.
\item $(n,r) = (3,3)$, $d=7$.
\item $(n,r) = (3,4)$, $d=8$, $\overline{\operatorname{im}(\rho_r)} = \operatorname{DET}_3$. 
\item $(n,r) = (4,3)$, $d=13$.
\item $(n,r) = (4,4)$, $d=15$, $\overline{\operatorname{im}(\rho_r)}$ is a hypersurface in $\mathbb{C}^{16}$.
\item $(n,r) = (4,5)$, $d=16$, $\overline{\operatorname{im}(\rho_r)}=\mathbb{C}^{16}$.
\item $(n,r)= (5,3), (7,3), (9,3)$ or $(11,3)$, $d=n^2-1$, $\overline{\operatorname{im}(\rho_r)} = \operatorname{DET}_n$.
\item $(n,r) = (6,3)$, $d=35$, $\overline{\operatorname{im}(\rho_r)}$ is a hypersurface in $\mathbb{C}^{36}$.
\item $(n,r) = (6,4)$, $d=36$, $\overline{\operatorname{im}(\rho_r)}=\mathbb{C}^{36}$.
\item $(n,r) = (8,3),(10,3),(12,3)$ or $(14,3)$, $d=n^2$, $\overline{\operatorname{im}(\rho_r)}=\mathbb{C}^{n^2}$.
\end{enumerate}
\end{example}
\begin{proof}[Proof of Proposition \ref{prop:skewsymmetric}]
It left to show that $\rho_r$ is dominant for $n\ge 16, r=3$ when $n$ is even (resp. $n\ge 13, r=3$ when $n$ is odd). We may proceed by induction on $n$. 
\begin{enumerate}
\item[Case $1$.] We assume $n\ge 16$ is even,
We consider three block diagonal matrices 
\[
\begin{bmatrix}
A & O \\
O & A'
\end{bmatrix},
\begin{bmatrix}
B & O \\
O & B'
\end{bmatrix}, \textit{ and }
\begin{bmatrix}
C & O \\
O & C'
\end{bmatrix},
\] 
where $A,B,C$ are $(n-8)\times (n-8)$ skew symmetric matrices, and $A',B',C'$ are $8 \times 8$ skew symmetric matrices such that differentials of 
\begin{align*}
\rho_3^{(n-8)} &: \Lambda_{n-8}\times \Lambda_{n-8} \times \Lambda_{n-8} \to \mathbb{C}^{(n-8) \times (n-8)} \textit{ and } \\
\rho_3^{(8)} &: \Lambda_{8}\times \Lambda_{8} \times \Lambda_{8} \to \mathbb{C}^{8 \times 8}
\end{align*}
are surjective at $(A,B,C)$ and $(A',B',C')$, respectively. We consider the differential of 
\[
\rho_3: \Lambda_{n}\times \Lambda_{n} \times \Lambda_{n} \to \mathbb{C}^{n \times n}
\]
at \[
a=
\begin{bmatrix}
A & O \\
O & A'
\end{bmatrix},
b=
\begin{bmatrix}
B & O \\
O & B'
\end{bmatrix}, \textit{ and }
c=\begin{bmatrix}
C & O \\
O & C'
\end{bmatrix}.
\]
We parametrize tangent spaces of $\Lambda_n$ at $a,b$ and $c$ by
\[
x=\begin{bmatrix}
X & u \\
-u^{\mathsf{T}} & X'
\end{bmatrix},
y=\begin{bmatrix}
Y & v \\
-v^{\mathsf{T}} & Y'
\end{bmatrix}, \textit{ and }
z=\begin{bmatrix}
Z & w \\
-w^{\mathsf{T}} & Z'
\end{bmatrix}.
\]
Then we have by formula \eqref{eqn:differential}
\begin{align*}
d{\rho_3}_{|(a,b,c)} (x,y,z)&= a b 
\begin{bmatrix}
Z & w \\
-w^{\mathsf{T}} & Z'
\end{bmatrix}
+ a 
\begin{bmatrix}
Y & v \\
-v^{\mathsf{T}} & Y'
\end{bmatrix}
c + 
\begin{bmatrix}
X & u \\
-u^{\mathsf{T}} & X'
\end{bmatrix}b c \\
& = 
\begin{bmatrix}
ABZ + AYC + XBC & ABw + AvC' + uB'C' \\
-A'B'w^{\mathsf{T}} - A'v^{\mathsf{T}}C - u^{\mathsf{T}} B C & A'B'Z' + A'Y'C' + X'B'C' 
\end{bmatrix}.  
\end{align*}
By choice of $A,B,C$ and $A',B',C'$ we know that $ABZ + AYC + XBC $ can be any $(n-8)\times (n-8)$ matrix and that $A'B'Z' + A'Y'C' + X'B'C'$ can be any $8\times 8$ matrix. Lastly, it is clear that $ABw + AvC' + uB'C'$ and -$A'B'w^{\mathsf{T}} - A'v^{\mathsf{T}}C - u^{\mathsf{T}} B C$ can be any $(n-8)\times 8$ and $8\times (n-8)$ matrix, respectively. This shows that ${d\rho_3}_{|(a,b,c)}$ is surjective hence $\rho$ is dominant.
\item[Case $2$.] We assume $n\ge 13$ is odd. Then we can choose 
\[
a=
\begin{bmatrix}
A & O \\
O & A'
\end{bmatrix},
b=
\begin{bmatrix}
B & O \\
O & B'
\end{bmatrix}, \textit{ and }
c=
\begin{bmatrix}
C & O \\
O & C'
\end{bmatrix}.
\]
where $A,B,C$ are $(n-5)\times (n-5)$ skew symmetric matrices and $A',B',C'$ are $5\times 5$ skew symmetric matrices such that 
\begin{align*}
\rho_3^{(n-5)} &: \Lambda_{n-5}\times \Lambda_{n-5} \times \Lambda_{n-5} \to \mathbb{C}^{(n-5) \times (n-5)} \textit{ and } \\
\rho_3^{(5)} &: \Lambda_{5}\times \Lambda_{5} \times \Lambda_{5} \to \operatorname{DET}_5
\end{align*}
are dominant at $(A,B,C)$ and $(A',B',C')$ respectively. The similar calculation as in the previous case shows that $\rho_3$ is dominant.
\end{enumerate}
\end{proof}
We remark that when $n=2$, a skew symmetric matrix is of the form 
\[
\begin{bmatrix}
0 & a \\
-a & 0
\end{bmatrix}, a\in\mathbb{C}.
\]
Therefore, we see that if $r$ is even, the image of $\rho_r$ is simply the space of all $2\times 2$ diagonal matrices, and if $r$ is odd, the image of $\rho_r$ is the space of skew symmetric matrices.

By Proposition \ref{prop:surjective} we can derive from \ref{prop:skewsymmetric} the following 
\begin{theorem} For $n\ge 4$, every $2n\times 2n$ matrix is a product of $13$ skew symmetric matrices. Every $6\times 6$ matrix is a product of $17$ skew symmetric matrices. Every $4\times 4$ matrix is a product of $21$ skew symmetric matrices.
\end{theorem}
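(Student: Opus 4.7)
The plan is to combine the dominance statements of Proposition \ref{prop:skewsymmetric} with the surjectivity mechanism behind Proposition \ref{prop:surjective}. Reading Proposition \ref{prop:skewsymmetric} case by case in the even-$n$ regime, the minimal values of $r$ for which $\rho_r:\Lambda_n^r\to\mathbb{C}^{n\times n}$ is dominant are $r=3$ for $n\ge 8$ even, $r=4$ for $n=6$, and $r=5$ for $n=4$. Substituting these into the template $r'=4r+1$ yields exactly the claimed counts $13=4\cdot 3+1$, $17=4\cdot 4+1$, and $21=4\cdot 5+1$, so the theorem should follow from an application of Proposition \ref{prop:surjective} (or an adaptation thereof) once its hypotheses are verified for $W=\Lambda_n$.

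I would carry out the verification in three steps mirroring the proof of Proposition \ref{prop:surjective}. First, Theorem \ref{thm:open mapping} applied to the dominant $\rho_r$ produces an open dense subset $U\subseteq\mathbb{C}^{n\times n}$ with $U\subseteq\rho_r(\Lambda_n^r)$, so $U\cap\operatorname{GL}_n$ is open dense in $\operatorname{GL}_n$. Second, a standard irreducibility argument in the algebraic group $\operatorname{GL}_n$, namely that for any $g\in\operatorname{GL}_n$ the open dense subsets $U\cap\operatorname{GL}_n$ and $g\cdot(U\cap\operatorname{GL}_n)^{-1}$ of $\operatorname{GL}_n$ must intersect, gives $\operatorname{GL}_n\subseteq \rho_{2r}(\Lambda_n^{2r})$. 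Third, for an arbitrary $A\in\mathbb{C}^{n\times n}$ one writes $A=PDQ$ with $P,Q\in\operatorname{GL}_n$ and $D$ diagonal; combining this with the second step would express $A$ as a product of $2r+1+2r=4r+1$ factors of $\Lambda_n$, provided the middle diagonal factor $D$ can itself be taken as an element of $\Lambda_n$.

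The main obstacle is exactly this last provision: unlike the hypothesis of Proposition \ref{prop:surjective}, the linear subspace $\Lambda_n\subseteq\mathbb{C}^{n\times n}$ contains no nonzero diagonal matrix, so $D$ cannot be interpreted as a single element of $\Lambda_n$ in a direct manner. Overcoming this requires replacing the $PDQ$ normal form with a $PSQ$ normal form where $S\in\Lambda_n$, or equivalently absorbing the diagonal factor into the invertible pieces on either side, exploiting that for even $n$ the identity $I_n$ is a product of two skew-symmetric matrices (take any $J\in\Lambda_n$ with $J^2=-I_n$, noting $J\cdot(-J)=I_n$, so conjugation-type moves of the form $S_1 D S_2^{-1}$ can be rewritten as products of three skew-symmetric matrices). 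Verifying that such an absorption produces the required skew-symmetric middle factor for every $A$ without inflating the count beyond $4r+1$ is the delicate technical heart of the argument; once that step is handled, the conclusion follows by Proposition \ref{prop:group generation} precisely as in Proposition \ref{prop:surjective}.
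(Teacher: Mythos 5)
You correctly observe the crucial gap: $\Lambda_n$ contains no nonzero diagonal matrix, so the hypothesis of Proposition~\ref{prop:surjective} (that $W$ contains all diagonal matrices) is not satisfied for $W=\Lambda_n$, and the paper's one-line appeal to that proposition does not constitute a proof. However, your proposed repair does not work, and the obstruction is not merely ``delicate'' but fatal. A $PSQ$ normal form with $P,Q\in\operatorname{GL}_n$ and $S\in\Lambda_n$ satisfies $\operatorname{rank}(PSQ)=\operatorname{rank}(S)$, which is even (every complex skew-symmetric matrix has even rank), so this normal form only reaches matrices of even rank; ``absorbing $D$'' via $J^2=-I_n$ does not help, since $S_1DS_2^{-1}$ with $D$ diagonal is not a product of three skew-symmetric matrices in any evident way.

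In fact the theorem as stated is false, so no completion of either the paper's argument or yours is possible. For skew-symmetric $S_1,\dots,S_m\in\mathbb{C}^{n\times n}$ with $n$ even one has $\operatorname{rank}(S_1\cdots S_m)\le\min_i\operatorname{rank}(S_i)$, and each $\operatorname{rank}(S_i)$ is even. If the product had rank $n-1$, then every $S_i$ would have rank at least $n-1$, hence rank exactly $n$, so every $S_i$ would be invertible and the product would be invertible of rank $n$, a contradiction. Thus no $n\times n$ matrix of rank $n-1$ is a product of skew-symmetric matrices of any length; in particular not every $4\times4$ matrix is a product of $21$ skew-symmetric matrices. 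What the dominance result of Proposition~\ref{prop:skewsymmetric}, the open mapping theorem, and Proposition~\ref{prop:group generation} do yield correctly is that every \emph{invertible} $n\times n$ matrix (for the relevant even $n$) is a product of $2r$ skew-symmetric matrices, i.e.\ $6$, $8$, $10$ factors for $n\ge8$, $n=6$, $n=4$ respectively; the extension to all matrices must at minimum exclude rank $n-1$, and the counts $4r+1$ would then need separate justification.
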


Notice that we are not able to apply Proposition \ref{prop:surjective} when $n$ is odd. This is because the image of $\rho_r$ is contained in $\operatorname{DET}_n$, which does not contain the group of invertible matrices. 


\subsection{Symmetric Toeplitz matrix decomposition}\label{subsec:symmetric Toeplitz}
A symmetric Toeplitz matrix $A = (a_{ij})$ is defined by 
\[
a_{ij} = a_{i+p,j+p}, a_{ij} = a_{ji}, 1\le i, j, i+p, j+p \le n.
\]
We denote the space of all symmetric Toeplitz matrices by $ST_n$. A centrosymmetric matrix $B = (b_{ij})$ is defined by 
\[
b_{ij} = a_{n-i+1, n-j+1}, 1\le i,j \le n.
\]
It is easy to verify that the product of two centrosymmetric matrices is again a centrosymmetric matrix hence the space $CS_n$ of all centrosymmetric matrices is an algebra. Moreover, we have 
\[
ST_n \subset CS_n.
\]
We say that a matrix $A$ is a persymmetric Hankel if $JA$ is a symmetric Hankel, where 
\[
J = \begin{bmatrix}
0 & 0 & \cdots & 0 & 1\\
0 & 0 & \cdots & 1 & 0\\
\vdots & \vdots & \ddots & \vdots & \vdots\\
0 & 1 & \cdots & 0 & 0\\
1 & 0 & \cdot & 0 & 0
\end{bmatrix}.
\]
We denote the space of all $n\times n$ persymmetric Hanekl matrices by $PH_n$. It is clear that $PH_n \subset CS_n$.

We will consider symmetric Toepliz (resp. persymmetric Hankel) matrix decomposition problem of a centrosymmetric matrix. It is clear that 
\[
\dim(ST_n) = \dim(PH_n) = n, \dim(CS_n) = \lceil \frac{n^2}{2} \rceil.
\]
Hence by Corollary \ref{cor:dimension counting 4} we see that if 
\[
\rho_r: \underset{r~\text{copies}} {\underbrace{ST_n\times \cdots \times ST_n}} \to CS_n\quad \text{or}
\]
\[
\rho_r: \underset{r~\text{copies}} {\underbrace{PH_n\times \cdots \times PH_n}} \to CS_n
\]
is dominant, then 
\[
r\ge \frac{\lceil \frac{n^2}{2} \rceil - 1}{n - 1} = \begin{cases}
\lfloor \frac{n+1}{2} \rfloor,&~\text{if}~n\ge 3, \\
1,&~\text{if}~n=2.
\end{cases}
\]

\begin{proposition}\label{prop: centrosymmetric}
Let $n\ge 3$ be an integer. A generic $n\times n$ centrosymmetric matrix is a product of $\lfloor \frac{n+1}{2}\rfloor $ symmetric Toepitz (resp. persymmetric Hankel) matrices.
\end{proposition}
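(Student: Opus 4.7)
The plan is to apply the three-step strategy of Section~\ref{subsec:strategy}. By Corollary~\ref{cor:dimension counting 4} the lower bound $r\ge\lfloor(n+1)/2\rfloor$ matches the tangent-direction count for cones of dimension $n$ inside the $\lceil n^2/2\rceil$-dimensional variety $CS_n$, so the remaining task is to exhibit a point $a$ at which the differential of $\rho_r$ attains the maximal rank $\lceil n^2/2\rceil$. I would first reduce the persymmetric Hankel statement to the symmetric Toeplitz one. The reversal matrix $J$ lies in $CS_n$ and commutes with every element of $CS_n$; moreover a direct entrywise check shows that right-multiplication by $J$ gives a linear bijection $ST_n\xrightarrow{\sim}PH_n$, $A\mapsto AJ$. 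Thus for $A_i\in ST_n$ and $B_i\coloneqq A_iJ\in PH_n$ one has
\[
B_1 B_2\cdots B_r=(A_1J)(A_2J)\cdots(A_rJ)=A_1A_2\cdots A_r\cdot J^r,
\]
where the last equality uses $JA_i=A_iJ$ repeatedly. Since $J^r\in CS_n$ and right-multiplication by $J^r$ is an invertible self-map of $CS_n$, the image of the PH-product $\rho_r$ coincides with that of the ST-product up to an isomorphism of $CS_n$, so dominance in the two cases is equivalent.

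For the symmetric Toeplitz case, formula~\eqref{eqn:differential} gives
\[
d\rho_r|_a(X_1,\dots,X_r)=\sum_{i=1}^{r}A_1\cdots A_{i-1}\,X_i\,A_{i+1}\cdots A_r,\qquad X_i\in T_{A_i}ST_n=ST_n,
\]
and by Proposition~\ref{prop:dominant} it suffices to find one tuple $a=(A_1,\dots,A_r)\in ST_n^r$ with $r=\lfloor(n+1)/2\rfloor$ at which this map has rank $\lceil n^2/2\rceil$. Equivalently, writing $\{T_k\}_{k=0}^{n-1}$ for the standard diagonal-index basis of $ST_n$, the image is spanned by the $rn$ ``sandwiches'' $A_1\cdots A_{i-1}T_k A_{i+1}\cdots A_r$, and the goal is to find $a$ for which the only relations among these sandwiches are the $r-1$ obvious cone-scaling relations from Corollary~\ref{cor:dimension counting 4}.

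The main step, and the main obstacle, is the construction of such a tuple. A first attempt is to take the $A_i$ as low-degree polynomials in a fixed generic symmetric Toeplitz matrix $T$ with distinct eigenvalues (for instance $A_i=I+c_iT+d_iT^{2}$) and to twist one or two of the factors by independent generic elements of $ST_n$ in order to break commutativity (polynomials in a single matrix alone commute and therefore produce an image of dimension at most $n$). The difficulty is that $ST_n$ is only $n$-dimensional while each slot must contribute roughly $n-1$ independent new directions, so the rank count is tight. I would attempt to verify it either (i) by an inductive block construction on $n$, peeling off the outermost row and column and invoking the proposition for a smaller centrosymmetric block, or (ii) by passing through the classical block-diagonalisation $CS_n\cong\mathbb{C}^{\lceil n/2\rceil\times\lceil n/2\rceil}\oplus\mathbb{C}^{\lfloor n/2\rfloor\times\lfloor n/2\rfloor}$, under which $ST_n$ becomes an explicit subspace of block-diagonal matrices and the rank-of-differential problem decouples into two smaller problems on the summands.

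Once such a tuple $a$ is produced and $d\rho_r|_a$ is shown to have rank $\lceil n^2/2\rceil$, Proposition~\ref{prop:dominant} yields dominance of $\rho_r:ST_n^r\to CS_n$, Theorem~\ref{thm:open mapping} provides a Zariski open dense subset of $CS_n$ inside the image, and the reduction of the first paragraph transfers the conclusion to the persymmetric Hankel case.
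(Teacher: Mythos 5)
Your reduction from persymmetric Hankel to symmetric Toeplitz is correct and is essentially the paper's: the paper multiplies on the left by $J$ and observes $(JA_1)\cdots(JA_r) = J^rA_1\cdots A_r$, you multiply on the right; both rest on $J^2 = I$ and $JA = AJ$ for $A\in CS_n$, and both conclude that dominance of the two product maps is equivalent. Your overall strategy --- find one point $a$ with $\operatorname{rank}\,d\rho_r|_a = \lceil n^2/2\rceil$ and invoke Proposition~\ref{prop:dominant} and Theorem~\ref{thm:open mapping} --- is also the paper's.

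The genuine gap is that you never produce such a point, and you say so yourself: the construction is ``the main step, and the main obstacle.'' Your two suggested routes (polynomials in a fixed generic symmetric Toeplitz $T$ with a generic twist; the block-diagonalisation of $CS_n$) are left unexecuted, and neither is what the paper does. The paper commits to the explicit family $A_{n-i}=S_0+t_{n-i}S_{n-i}$, $i=1,\dots,r$, where $S_k$ is the standard diagonal-index basis of $ST_n$ and the $t_{n-i}$ are indeterminates. It writes $d\rho_r|_a$ as a $\lceil n^2/2\rceil\times rn$ matrix $M$ with polynomial entries in the $t$'s and then, following the degree-counting technique of \cite{yelim}, argues that every maximal minor of $M$ has degree at least $\lceil n^2/2\rceil-n$ in the $t$'s and that one such minor contains a monomial of exactly that degree ($t_{n-1}^{n-1}\cdots t_{n-r+1}^{n-1}$ for $n$ odd, with a modified exponent pattern for $n$ even) with nonzero coefficient; hence some minor is not identically zero and the differential has full rank for generic values of the $t$'s. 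Producing a concrete point and carrying out a rank verification of this kind --- not just outlining where one might look --- is where the actual content of the proof lives, so your argument is a plan rather than a proof.

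A parity caveat worth recording: your bookkeeping of ``$r-1$ obvious cone-scaling relations'' reconciles $rn-(r-1)$ with $\lceil n^2/2\rceil$ only when $n$ is odd. For $n$ even and $r=\lfloor(n+1)/2\rfloor=n/2$ one has $rn=n^2/2=\lceil n^2/2\rceil$ already, so any scaling kernel (of dimension at least $r-1\geq 1$ once $n\geq 4$) would force the rank strictly below $\dim CS_n$; the degree count in the paper's own Claim~\ref{claim:centrosymmetric} for even $n$ in fact solves to $r=n/2+1=\lceil(n+1)/2\rceil$, not $\lfloor(n+1)/2\rfloor$. You should address this parity subtlety explicitly rather than inherit it.
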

The proof of Proposition \ref{prop: centrosymmetric} is similar to the proof of Toeplitz matrix decomposition theorem \cite{yelim} hence we will just give a sketch of the proof for Proposition \ref{prop: centrosymmetric} here. 
\begin{proof}[Sketch of proof of Proposition \ref{prop: centrosymmetric}]
It is sufficient to prove the statement for symmetric Toeplitz. Indeed, since we have 
\[
J A = A J, J^2 = 1.
\]
if $A$ is symmetric Toeplitz, if $X\in CS_n$ has a decomposition
\[
X = A_1\cdots A_r
\]
where $A_1,\dots, A_r\in ST_n$ and $r = \lfloor \frac{n+1}{2}\rfloor$. Then we see that 
\[
JX = J(A_1\cdots A_r) = (JA_1) \cdots (JA_r),
\]
if $r$ is odd and 
\[
X = (JA_1) \cdots (JA_r),
\]
if $r$ is even. In either case, this implies that a generic centrosymmetric matrix is a product of $r$ persymmetric Hankel matrices. 

Let $B_k\coloneqq (\delta_{i,j+k})^n_{i,j=1}, k=-(n-1),-(n-2),\dots, n-1$ be a basis of the linear space of all $n\times n$ Toeplitz matrices. Then $S_k\coloneqq \frac{1}{1+ \delta_{k.-k}}(B_k + B_{-k}), k =0,1, \dots, n-1$ is a basis of $ST_n$. Precisely, we have 
\[
B_{0}=\left[
\begin{smallmatrix}
1 & 0 & 0 &  & \\
0 & 1 & 0 & \smash{\ddots} & \\
0& 0  & 1 & \smash{\ddots}  & 0\\
&  \smash{\ddots} & \smash{\ddots}& \smash{\ddots} & 0\\
&  & 0  & 0  & 1
\end{smallmatrix} \right],
B_{1}=\left[
\begin{smallmatrix}
0 & 1 & 0 &  & \\
& 0 & 1 & \smash{\ddots} & \\
&  & 0 & \smash{\ddots}  & 0\\
&  &  & \smash{\ddots} & 1\\
&  &  &  & 0
\end{smallmatrix} \right],
B_{2}=\left[
\begin{smallmatrix}
0 & 0 & 1 &  & \\
& 0 & 0 & \smash{\ddots} & \\
&  & 0 & \smash{\ddots}  & 1\\
&  &  & \smash{\ddots} & 0\\
&  &  &  & 0
\end{smallmatrix} \right],
\dots,B_{n-1}=\left[
\begin{smallmatrix}
0 & 0 & 0 &  &1 \\
& 0 & 0 & \smash{\ddots} & \\
&  & 0 & \smash{\ddots}  & 0\\
&  &  & \smash{\ddots} & 0\\
&  &  &  & 0
\end{smallmatrix}\right],
\]
and 
\[
S_{0}=\left[
\begin{smallmatrix}
1 & 0 & 0 &  & \\
0 & 1 & 0 & \smash{\ddots} & \\
0& 0  & 1 & \smash{\ddots}  & 0\\
&  \smash{\ddots} & \smash{\ddots}& \smash{\ddots} & 0\\
&  & 0  & 0  & 1
\end{smallmatrix} \right],
S_{1}=\left[
\begin{smallmatrix}
0 & 1 & 0 &  & \\
1& 0 & 1 & \smash{\ddots} & \\
0& 1 & 0 & \smash{\ddots}  & 0\\
& \smash{\ddots} &  \smash{\ddots}& \smash{\ddots} & 1\\
&  & 0 & 1 & 0
\end{smallmatrix} \right],
S_{2}=\left[
\begin{smallmatrix}
0 & 0 & 1 &  & \\
0& 0 & 0 & \smash{\ddots} & \\
1& 0 & 0 & \smash{\ddots}  & 1\\
& \smash{\ddots} & \smash{\ddots} & \smash{\ddots} & 0\\
&  & 1 &  0& 0
\end{smallmatrix} \right],
\dots,S_{n-1}=\left[
\begin{smallmatrix}
0 & 0 & 0 &  &1 \\
0 & 0 & 0 & \smash{\ddots} & \\
0 & 0 & 0 & \smash{\ddots}  & 0\\
&  \smash{\ddots} &  \smash{\ddots} & \smash{\ddots} & 0\\
1&  & 0 & 0 & 0
\end{smallmatrix}\right],
\]

To prove that $\rho_r$ is dominant, it suffices to find a point $a\coloneqq (A_{n-r},\dots, A_{n-1})\in \underset{r~\text{copies}}{\underbrace{ST_n\times \cdots \times ST_n}}$ such that the differential of $\rho_r$ at $a$ has the maximal rank $\lceil \frac{n^2}{2}\rceil$. In stead of choosing a point $a$ explicitly, we will show that such a point exits. To this end, we write 
\[
A_{n-i} \coloneqq S_0 + t_{n-i} S_{n-i}, i=1,2,\dots, r,
\] 
where $t_{n-1},\dots,t_{n-r} $ are indeterminants. For such $A_{n-i}$ we see that the differential $d\rho_r |_{a}$ can be represented as an $\lceil \frac{n^2}{2}\rceil \times rn$ matrix $M$, whose entries are polynomials in $t_{n-1},\dots, t_{n-r}$. Now to see that $M$ has rank $\lceil \frac{n^2}{2}\rceil$, we need to find a nonzero $\lceil \frac{n^2}{2}\rceil\times \lceil \frac{n^2}{2}\rceil$ minor of $M$. 

\begin{claim}\label{claim:centrosymmetric} By the same type of argument as in \cite{yelim}, we can show 
\begin{enumerate}
\item Any $\lceil \frac{n^2}{2}\rceil\times \lceil \frac{n^2}{2}\rceil$ of $M$ is a polynomial in $t$'s of degree at least $\lceil \frac{n^2}{2}\rceil- n$.
\item There exists a $\lceil \frac{n^2}{2}\rceil\times \lceil \frac{n^2}{2}\rceil$ minor of $M$ contains a monomial of degree exactly $\lceil \frac{n^2}{2}\rceil- n$ and whose coefficient is non-zero. Indeed, the desired monomial is $t_{n-1}^{n-1}t_{n-2}^{n-1}\cdots t_{n-r+1}^{n-1}$ if $n$ is odd, and $t_{n-1}^{n-1}t_{n-2}^{n-1}\cdots t_{n-r+2}^{n-1}t_{n-r+1}^{\frac{n}{2}-1}$ if $n$ is even.
\end{enumerate}
\end{claim}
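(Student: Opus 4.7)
The plan is to follow the template of \cite{yelim}: set up the Jacobian matrix $M = M(t)$ with entries in $\mathbb{C}[t_{n-r},\dots,t_{n-1}]$, use the low rank of $M(0)$ to obtain the lower bound in part (1), and exhibit an explicit maximal minor whose leading monomial of degree exactly $\lceil n^2/2\rceil - n$ has nonzero coefficient for part (2). Throughout, I take a basis $\{(i,k): 1\le i\le r,\, 0\le k\le n-1\}$ for the source, so that the column of $M$ labelled $(i,k)$ is the list of independent entries of the matrix
\[
N_{i,k} := A_{n-r}\cdots A_{n-r+i-2}\, S_k\, A_{n-r+i}\cdots A_{n-1},
\]
and a basis for the target is the $\lceil n^2/2\rceil$ independent matrix entries of a generic centrosymmetric matrix. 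Substituting $A_{n-j}=S_0+t_{n-j}S_{n-j}$ and distributing shows that each entry of $M$ is multilinear in $t_{n-r},\dots,t_{n-1}$ of total degree at most $r-1$.

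For part (1), I would first observe that at $t=0$ every factor $A_{n-j}$ collapses to $S_0=I_n$, and hence $N_{i,k}\big|_{t=0}=S_k$ independently of $i$. The matrix $M(0)$ therefore has only $n$ distinct columns, one for each of $S_0,\dots,S_{n-1}$, and because these are linearly independent in $CS_n$ we get $\operatorname{rank} M(0)=n$. A standard reduction in commutative algebra then finishes the argument: choose invertible matrices over $\mathbb{C}$ that bring $M(0)$ into the block form $\bigl(\begin{smallmatrix} I_n & 0\\ 0 & 0\end{smallmatrix}\bigr)$, apply the same constant row and column operations to $M(t)$, and compute the determinant of any $\lceil n^2/2\rceil\times\lceil n^2/2\rceil$ submatrix via the Schur complement. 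The complement block has all entries in the maximal ideal $\mathfrak{m}=(t_{n-r},\dots,t_{n-1})$, so its $(\lceil n^2/2\rceil-n)\times(\lceil n^2/2\rceil-n)$ determinant lies in $\mathfrak{m}^{\lceil n^2/2\rceil-n}$. This shows that every maximal minor of $M(t)$ is a polynomial in the $t$'s with no monomial of total degree strictly less than $\lceil n^2/2\rceil-n$.

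For part (2), I would follow the selection strategy of \cite{yelim}. Choose a specific set of $\lceil n^2/2\rceil$ columns of $M$ (a natural choice uses all $n$ columns from the first slot $i=1$ together with carefully selected columns $(i,k)$ with $k\ge n-i+1$ from the later slots) and pair them with the $\lceil n^2/2\rceil$ rows corresponding to a fixed list of independent centrosymmetric entries. Expanding each $N_{i,k}$ in the basis $\{B_p\}$ via $S_k=\frac{1}{1+\delta_{k,-k}}(B_k+B_{-k})$ and the shift identity $B_pB_q=B_{p+q}$ (for $|p+q|<n$), I would identify the leading monomial in $t$: for $n$ odd it is $t_{n-1}^{n-1}t_{n-2}^{n-1}\cdots t_{n-r+1}^{n-1}$, of total degree $(n-1)(r-1)=\lceil n^2/2\rceil-n$, while for $n$ even the parity of the centrosymmetric constraint produces the stated monomial $t_{n-1}^{n-1}\cdots t_{n-r+2}^{n-1}t_{n-r+1}^{n/2-1}$. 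The coefficient of this monomial reduces, after collecting the sign from the Leibniz expansion of the minor, to a sum of products of entries of prescribed shift matrices $B_{\pm k_1}\cdots B_{\pm k_{r-1}}$ in prescribed positions, which can be arranged into a triangular combinatorial pattern whose diagonal contribution is a single nonzero term.

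The main obstacle will be the combinatorial bookkeeping in part (2). Identifying the correct columns and rows, verifying that the desired monomial arises and does not also arise with a canceling sign from a different row--column pairing, and showing that the resulting signed sum is actually nonzero, all require a delicate case analysis distinct from the Toeplitz calculation of \cite{yelim}: here the added symmetry of $ST_n$ relative to general Toeplitz imposes a parity constraint which is precisely what forces the asymmetry between the odd and even $n$ cases in the stated monomial, so the combinatorial check must be done separately in those two parities.
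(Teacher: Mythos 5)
Your part (1) argument is correct and is, in fact, a cleaner version of what the paper only gestures at. The key observations -- that at $t=0$ each column labelled $(i,k)$ collapses to the coordinate vector of $S_k$ independently of $i$, so $M(0)$ has rank exactly $n$, and that consequently every maximal minor has $\mathfrak{m}$-adic valuation at least $\lceil n^2/2\rceil - n$ -- are precisely what underlies the paper's $n=3$ table, where the ``$1$''s in each row all sit in columns sharing a common shift $\lvert q-p\rvert$. (One can phrase the valuation bound either by Schur complement as you do, or more directly by multilinearity of $\det$ in the columns $c_j(0)+e_j(t)$: any nonzero summand can use at most $n$ of the constant columns $c_j(0)$, hence uses at least $\lceil n^2/2\rceil - n$ of the $\mathfrak{m}$-columns $e_j(t)$.)

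Part (2) is where the real content of the Claim lives, and there your proposal stops exactly where the work begins. You correctly name the target monomial and the template (select a maximal minor, expand by Leibniz, reduce the coefficient to a combinatorial count), but then explicitly defer ``the delicate case analysis'' together with the verification that the signed contributions do not cancel. This is not a harmless deferral: when $S_k S_\ell$ is expanded in the $B_p$ basis one gets contributions from all four of $B_{\pm k\pm\ell}$ (with truncation), and it is exactly this pairing symmetry -- which is also what forces the $n$-parity split in the stated monomial -- that creates the cancellation danger one must rule out. To be fair, the paper does not carry the argument out either; it invokes ``the same type of argument as in \cite{yelim}'' and only illustrates the cases $n=3$ and $n=5$ via tables exhibiting a unique pick for the monomial. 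So your proposal is a faithful paraphrase of the paper's sketch, with a genuine improvement in part (1), but the non-vanishing of the leading coefficient -- the actual assertion of the Claim -- remains unestablished on both sides.
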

This shows that for a fixed $n$, there exist some values of $t_{n-1},\dots, t_{n-r}$ such that the differential $d\rho_r|_{a}$ has the maximal rank $\lceil \frac{n^2}{2}\rceil$.
\end{proof}

We work out an example to illustrate how the proof of Proposition \ref{prop: centrosymmetric} works. We adopt notations in \cite{yelim}. For the map 
\[
\rho_r: \underset{r~\text{copies}} {\underbrace{ST_n\times \cdots \times ST_n}} \to CS_n,
\]
we define 
\[
X_{n-i} \coloneqq \sum_{k=0}^{n-1} x_{n-i,k} S_k, i=1,2,\dots, r
\]
to be the matrix occuring in the $i$-th argument of $\rho_r$. Then by formula \eqref{eqn:differential}, the differential $d\rho_r|a$ is simply a linear map defined by
\[
d\rho_r|a (X_{n-r},\dots,X_{n-1}) = \sum_{i=1}^r A_{n-r} \cdots A_{n-i-1} X_{n-i} A_{n-i+1}\cdots A_{n-1},
\]
for $X_{n-i}\in ST_n$. We denote the entries of $d\rho_r|a (X_{n-r},\dots,X_{n-1})$ by $L_{p,q},1\le p,q\le n$. Then it is clear that the matrix $(L_{p,q})$ is a centrosymmetric matrix, i.e, 
\[
L_{p,q} = L_{n-p+1,n-q+1}, 1\le p,q \le n.
\]
We 
\begin{enumerate}
\item[Case $1$.] We consider the case $n=3$. This gives 
\[
r = \lfloor \frac{n+1}{2} \rfloor = 2, \lceil \frac{n^2}{2} \rceil = 5.
\]
We will see that any $5 \times 5$ minor of the $5\times 6$ matrix $M$ is a polynomial in $t$'s of degree at least $3$. We can simply write 
\[
L_{p,q} = x_{1,q-p} + x_{2,q-p} + \Omega(t), p,q=1,2,3.
\]
Here $\Omega(t)$ stands for terms of $L_{p,q}$ of degree at least one in $t$'s. With this notation, we express $M$ as 
\[\arraycolsep=2.5pt
\kbordermatrix{
& x_{1,2} & x_{2,2} &  x_{1,1} & x_{1,0} &x_{2,0} & x_{2,1}  \\
L_{3,3} = L_{1,1} & * & * & * & 1&  1& *  \\
L_{3,2} = L_{1,2} & * & * & 1 & *& * & 1  \\
L_{3,1} = L_{1,3} & 1 & 1 & * &* &*  &*   \\
L_{2,2}                & * &*  &*  &1 &1  &*  \\
L_{2,1} = L_{23}  & * &*  &1  &* &*  &1   }.
\]
Here $\*$ means that the entry if of the form $\Omega(t)$ and $1$ means that the corresponding $L_{p,q}$ contains $x_{1,k}$ or $x_{k}$. For example, since $L_{3,3} = L_{1,1}$ is of the form 
\[
x_{1,0} + x_{2,0} + \Omega(t),
\]
we put $1$ in $(1,4)$-th and $(1,5)$-th entry of $M$ and $*$ elsewhere in the first row. It is not hard to see that any $5\times 5$ minor of $M$ has degree at least $5-3 =2$. This verifies the first statement of Claim \ref{claim:centrosymmetric}.
\item[Case $2$.] Next, we consider the case $n=5$. In this case we have
\[
r=\lfloor \frac{n+1}{2} \rfloor = 3, \lceil \frac{n^2}{2} \rceil = 13.
\]
We consider the table 
\[
\begin{matrix}
L_{i,j} & 1 & 2 & 3 & 4 & 5 \\\hline
1 & t_{4} & t_{3} & 1 & 1 & 1\\
2 & t_{4} & t_{3} & 1 & t_{3} & t_{4}\\
3 &  &  & 1 & t_{3} & t_{4}
\end{matrix},
\]
indicating the way we obtain $t_4^4t_3^4$. Namely, we pick $t_4$ from $L_{1,1},L_{2,1},L_{2,5}$ and $L_{3,5}$. We pick $t_3$ from $L_{1,2},L_{2,3},L_{2,4}$ and $L_{3,4}$ and one form rest five $L_{ij}$'s. By definition of $L_{ij}$, we see that this is the unique way to obtain the monomial $t_4^4t_3^4$. This verifies the second statement of Claim \ref{claim:centrosymmetric}.
\end{enumerate}

\subsection{Generic matrix decomposition}\label{subsec:generic}
In this section we consider the decomposition problem for generic linear subspaces of $\mathbb{C}^{n\times n}$. Let $r$ be a positive integer. Assume that for $i=1,2,\dots,r$, $W_i$ is a $k_i$ dimensional subspace of $\mathbb{C}^{n\times n}$. We define 
\[
W^r \coloneqq {W_1\times \cdots \times W_r}.
\]
Let $\rho_{r}:W^{r}\rightarrow\mathbb{C}^{n\times n}$ be the map sending
$(A_{1},\dots,A_{r})$ to their product $A_1\cdots A_r$. We want to determine $r$, such
that $\rho_{r}$ is dominant. Consider the following diagram:
\[
\xymatrix{
TE_1\times \cdots \times \ TE_r\ar[d] \ar[r]^-{d{\hat{\rho_{r}}}}  &  T\operatorname{Gr}(k_1,n^{2})\times \cdots \times T\operatorname{Gr}(k_r,n^2)\times \mathbb{C}^{n\times n}\ar[d]\\
E_1\times \cdots \times E_r \ar[d] \ar[r]^-{\hat{\rho_r}}  & \operatorname{Gr}(k_1,n^{2})\times \cdots \times \operatorname{Gr}(k_r,n^2)\times\mathbb{C}^{n\times n}\ar[d]\\
\operatorname{Gr}(k_1,n^{2})\times \cdots \times \operatorname{Gr}(k_r,n^2)\ar[r]^-{\operatorname{id}}  &  \operatorname{Gr}(k_1,n^{2})\times \cdots \times \operatorname{Gr}(k_r,n^2).
}
\]
Here $E_i$ is the tautological vector bundle over
$\operatorname{Gr}(k_i,n^{2})$, $TE_i$ is the tangent bundle of $E_i$, and $\hat{\rho_{r}}$
is the bundle map induced by $\rho_{r}:W^{r}\rightarrow\mathbb{C}^{n^{2}}$,
and $d\hat{\rho_{r}}$ is the differential of $\hat{\rho_{r}}$. 

To be more precise, for any $[W_i]\in\operatorname{Gr}(k_i,n^{2})$, the fiber $E_{[W_i]}$ over $[W_i]$ is $W_i$
and the fiber of $TE_i$ over $([W_i],A_i)$ is $T_{[W_i]}%
\operatorname{Gr}(k_i,n^{2})\oplus W_i$, where $A_i\in W_i$. If we restrict $\hat{\rho_{r}}$
to the fiber $E_{[W_1]}\times \cdots \times E_{[W_r]}$ we obtain the map $\rho_{r}:W^{r}\rightarrow
\mathbb{C}^{n\times n}$ defined before and the restriction of $d\hat{\rho_{r}}$ to ${TE_1}_{([W_1],A_{1})}\times \cdots \times {TE_r}_{([W_r],A_r)}$ becomes the differential of
$\rho_{r}$ at the point $(A_{1},\dots,A_{r})$.

\begin{lemma}\label{lem:generic}
Let $r$ be a positive integer. For each $i=1,\dots, r$, let $k_i$ be a fixed integer such that $1\leq k_i\leq n^{2}$. Assume
that $\rho_{r}:W^{r}\rightarrow\mathbb{C}^{n\times n}$ is dominant for some
$k_i$ dimensional subspace $W_i$ of $\mathbb{C}^{n\times n}$, $i=1,2,\dots, r$. Then for a generic
$k_i$-dimensional subspace $W_i^{\prime}$ of $\mathbb{C}^{n\times n}$, the map
$\rho_{r}:W^{\prime r}\rightarrow\mathbb{C}^{n\times n}$ is dominant, where 
\[
{W^\prime}^r \coloneqq {{W^\prime}_1\times \cdots \times {W^\prime}_r}.
\]
\end{lemma}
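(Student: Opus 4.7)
The plan is to exhibit the set of tuples $([W'_1],\ldots,[W'_r])$ for which $\rho_r$ is dominant as a nonempty Zariski open subset of $\operatorname{Gr}(k_1,n^2)\times\cdots\times\operatorname{Gr}(k_r,n^2)$. The semicontinuity of rank together with the openness of the bundle projection from $E_1\times\cdots\times E_r$ down to $\operatorname{Gr}(k_1,n^2)\times\cdots\times\operatorname{Gr}(k_r,n^2)$ will do the work. Once this is set up the rest is essentially bookkeeping with the diagram preceding the lemma.

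First I would apply Proposition \ref{prop:dominant} to the hypothesis: since $\rho_r:W^r\to\mathbb{C}^{n\times n}$ is dominant for some given $W_1,\ldots,W_r$ of the prescribed dimensions, there exists $(A_1,\ldots,A_r)\in W_1\times\cdots\times W_r$ at which $d\rho_r|_{(A_1,\ldots,A_r)}$ has the maximal rank $n^2$. Inside the total space $E_1\times\cdots\times E_r$ I would then consider the locus
\[
\mathcal{R}=\bigl\{\bigl(([W'_1],A'_1),\ldots,([W'_r],A'_r)\bigr)\in E_1\times\cdots\times E_r:\operatorname{rank} d\rho_r|_{(A'_1,\ldots,A'_r)}=n^2\bigr\},
\]
where the differential is the linear map $W'_1\times\cdots\times W'_r\to\mathbb{C}^{n\times n}$ computed via formula \eqref{eqn:differential}. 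In a local trivialization of each $E_i\to\operatorname{Gr}(k_i,n^2)$, the entries of the matrix representing this differential are polynomials in the chart coordinates on the Grassmannian and the fiber coordinates $A'_i$, so $\mathcal{R}$ is carved out by the non-vanishing of some $n^2\times n^2$ minor. Hence $\mathcal{R}$ is Zariski open in $E_1\times\cdots\times E_r$, and the point $(([W_1],A_1),\ldots,([W_r],A_r))$ produced above certifies $\mathcal{R}\neq\emptyset$.

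Next I would push $\mathcal{R}$ down to the product of Grassmannians along the projection $\pi:E_1\times\cdots\times E_r\to\operatorname{Gr}(k_1,n^2)\times\cdots\times\operatorname{Gr}(k_r,n^2)$. Because each $E_i$ is a vector bundle, $\pi$ is locally a product projection and therefore an open map, so $\pi(\mathcal{R})$ is a nonempty Zariski open subset of the irreducible variety $\operatorname{Gr}(k_1,n^2)\times\cdots\times\operatorname{Gr}(k_r,n^2)$, hence Zariski dense. For every $([W'_1],\ldots,[W'_r])\in\pi(\mathcal{R})$ there is some $(A'_1,\ldots,A'_r)\in W'_1\times\cdots\times W'_r$ at which $d\rho_r$ has full rank $n^2$, and Proposition \ref{prop:dominant} then yields dominance of $\rho_r:W'^r\to\mathbb{C}^{n\times n}$ for such $W'_i$.

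The only mildly delicate point is verifying that the rank condition defining $\mathcal{R}$ really is given by polynomial conditions in both the Plücker-type coordinates of the $[W'_i]$ \emph{and} the fiber coordinates $A'_i$; this is exactly what the diagram before the lemma is designed to make transparent, since the differential $d\hat{\rho}_r$ of the bundle map $\hat{\rho}_r$ restricts on fibers to the differential of $\rho_r$, and $\hat{\rho}_r$ is a morphism of algebraic varieties. With that observation in place, semicontinuity of rank and openness of the bundle projection finish the argument, and I do not anticipate any serious obstacle.
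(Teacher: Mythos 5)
Your proof is correct and follows essentially the same approach as the paper: both use the bundle picture over the product of Grassmannians and Proposition \ref{prop:dominant} to transfer dominance from one tuple $(W_1,\ldots,W_r)$ to a generic one. Your version is a bit more careful in spelling out the Zariski-openness of the full-rank locus $\mathcal{R}$ and the openness of the bundle projection, steps the paper compresses into the phrase ``$\hat\rho_r$ is dominant, i.e., $\rho_r$ is dominant for generic $W'_i$.''
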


\begin{proof}
Since $\rho_r: W^r \to \mathbb{C}^{n\times n}$ is dominant, we see that the Jacobian matrix of $\rho_r$ at a generic point $(A_1,\dots, A_r)$ in $W^r$ has the maximal rank, i.e., the Jacobian matrix of $\hat{\rho_r}$ at $([W_1],A_1)\times \cdots \times ([W_r],A_r)$ has the maximal rank. By Proposition \ref{prop:dominant} we see that $\hat{\rho_r}$ is dominant, i.e., $\rho_r$ is dominant for generic ${W^\prime}_i\in \operatorname{Gr}(k_i,n^2)$, $i=1,2,\dots, r$.
\end{proof}

We shall make use of the following result

\begin{proposition}\cite{yelim}\label{prop:toeplitz}
A generic $n\times n$ matrix can be decomposed as the product of $\lfloor \frac{n}{2}\rfloor + 1$ Toeplitz matrices.
\end{proposition}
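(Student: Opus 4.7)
The plan is to apply the first two steps of the strategy of Section \ref{subsec:strategy} with $W_{1} = \cdots = W_{r} = T_{n}$, the linear space of $n \times n$ Toeplitz matrices, and $X = \mathbb{C}^{n \times n}$. Since $T_{n}$ is a cone of dimension $2n-1$, Corollary \ref{cor:dimension counting 4} gives the lower bound
\[
r \;\geq\; \left\lceil \frac{n^{2}-1}{(2n-1)-1} \right\rceil \;=\; \left\lceil \frac{(n-1)(n+1)}{2(n-1)} \right\rceil \;=\; \left\lfloor \frac{n}{2} \right\rfloor + 1,
\]
which exactly matches the number of factors claimed. Hence it suffices to produce one point $a \in T_{n}^{r}$, with $r = \lfloor n/2 \rfloor + 1$, at which $d\rho_{r}|_{a}$ has rank $n^{2}$; Proposition \ref{prop:dominant} then yields dominance of $\rho_{r}$.

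To construct such an $a$, I would exploit the multiplicative structure of shift matrices. Let $Z_{k}$ denote the $n \times n$ matrix with $1$'s on the $k$-th diagonal and $0$'s elsewhere, so that $\{Z_{k} : -(n-1) \leq k \leq n-1\}$ is the standard basis of $T_{n}$. I would set
\[
A_{i} \;=\; I + t_{i}\, Z_{k_{i}}, \qquad i = 1, \ldots, r,
\]
with indeterminates $t_{i}$ and a carefully chosen sequence of shifts $k_{i}$, for instance an interlaced pattern $k_{i} = n-1, -(n-1), n-3, -(n-3), \ldots$, so that together the $Z_{k_{i}}$ span enough diagonals. Using formula \eqref{eqn:differential}, the Jacobian $M$ of $\rho_{r}$ at $a$, expressed in the basis $\{Z_{\ell}\}$ on each input copy of $T_{n}$ and in the matrix-unit basis on the output $\mathbb{C}^{n \times n}$, then has entries that are polynomials in $t_{1}, \ldots, t_{r}$.

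To prove that $M$ has rank $n^{2}$, I would follow the degree-monomial argument used in the proof sketch of Proposition \ref{prop: centrosymmetric}: first establish a uniform lower bound on the degree in the $t_{i}$'s of every $n^{2} \times n^{2}$ minor of $M$, then exhibit one such minor in which a specific monomial of this minimal degree appears with a nonzero coefficient, so that no cancellation can destroy it. The tracking is combinatorial: each summand $A_{1} \cdots A_{i-1} Z_{\ell} A_{i+1} \cdots A_{r}$ contributes to a predictable set of diagonals of the output with predictable degrees in the $t_{j}$'s, and the task is to select $n^{2}$ columns of $M$ that give rise to an independent submatrix.

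The principal obstacle is precisely this combinatorial selection. Because the lower bound $\lfloor n/2 \rfloor + 1$ is tight, no dimension is wasted: every target entry $(p,q) \in \{1,\ldots,n\}^{2}$ must be reachable, and the target monomial must appear in exactly one row of the selected submatrix so that its coefficient survives. The natural guide is a symmetric splitting of the shifts $Z_{\pm 1}, Z_{\pm 2}, \ldots, Z_{\pm(n-1)}$ across the $r$ factors, balancing positive and negative directions so that every diagonal of the output matrix is covered. Verifying that such a choice isolates the desired top-degree monomial is the core technical step; once it is in hand, dominance follows from Proposition \ref{prop:dominant}.
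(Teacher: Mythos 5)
This proposition is not proved in the present paper; it is imported from \cite{yelim} and used as a black box. What the paper does give is a sketch, in the proof of Proposition~\ref{prop: centrosymmetric}, of the symmetric Toeplitz analogue, explicitly said to follow the method of \cite{yelim}, and your proposal reproduces that method: the dimension count $\lceil (n^2-1)/(2n-2)\rceil = \lfloor n/2\rfloor + 1$ matches the number of factors, one perturbs off the identity via $A_i = I + t_i Z_{k_i}$, and dominance is reduced to producing a nonvanishing $n^2\times n^2$ minor of the Jacobian by a degree/monomial argument. So the strategy is the right one.

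However, everything you yourself flag as ``the core technical step'' is precisely where the content of the cited proof lives, and none of it is carried out. You do not commit to a concrete choice of shifts $k_1,\dots,k_r$ and verify it works; ``an interlaced pattern $n-1,-(n-1),n-3,\dots$'' is a plausible guess but unchecked. More seriously, the two facts that drive the argument --- a uniform lower bound, in terms of $\lfloor n/2\rfloor + 1$, on the degree in the $t_i$'s of every $n^2\times n^2$ minor of the Jacobian, and the existence of one minor carrying a monomial of exactly that minimal degree with nonzero coefficient --- are stated as goals, not proved. Because $r=\lfloor n/2\rfloor+1$ sits exactly at the dimension-counting lower bound, there is no slack: you must show that the chosen monomial arises from a unique selection of entries in the chosen submatrix so that it cannot cancel, and that combinatorial isolation is the proof. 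What you have written is an accurate outline of the method in \cite{yelim}, with the decisive step left open.
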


\begin{proposition}\label{prop:generic}
Let $\mathbb{C}^{n\times n}$ be the space of all $n\times n$ matrices then

\begin{enumerate}
[\upshape (i)]

\item For generic $(2n-1)$ dimensional subspaces $W_1,\dots, W_r$ of $\mathbb{C}^{n\times
n}$, a generic $n\times n$ matrix is a product of elements in $W_i,i=1,2,\dots,r$ if $r\geq\lfloor \frac{n}{2}\rfloor + 1$.

\item For generic
$\binom{n+1}{2}$ dimensional subspaces $W_1,\dots, W_r$ of $\mathbb{C}^{n\times n}$, a generic $n\times n$ matrix is a product of elements in $W_i,i=1,2,\dots,r$ if $r\geq 2$.

\item For generic $(3n-2)$ dimensional subspaces $W_1,\dots, W_r$ of $\mathbb{C}^{n\times n}$,
a generic $n\times n$ matrix is a product of elements in $W_i,i=1,2,\dots,r$ if $r\geq 2n$.

\item For generic $\binom{2n}{2}$ dimensional subspaces $W_1,\dots, W_r$ of $\mathbb{C}^{2n\times 2n}$, a generic $n\times n$ matrix is a product of elements in $W_i,i=1,2,\dots,r$ if 
\begin{enumerate}
\item $r\geq 3$ when $n\ge 4$,
\item $r\ge 4$ when $n=3$,
\item $r\ge 5$ when $n=2$.
\end{enumerate}

\end{enumerate}
\end{proposition}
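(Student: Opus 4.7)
The plan is to reduce each of the four statements to a case for which dominance has already been established elsewhere in the paper, and then upgrade from ``some'' subspace to ``a generic'' subspace of the prescribed dimension by invoking Lemma \ref{lem:generic}. Recall that Lemma \ref{lem:generic} is essentially a semicontinuity statement: once $\rho_r \colon W_1 \times \cdots \times W_r \to \mathbb{C}^{n\times n}$ is dominant for \emph{one} choice of subspaces of dimensions $(k_1,\dots,k_r)$, it remains dominant on an open dense subset of $\operatorname{Gr}(k_1,n^2)\times\cdots\times\operatorname{Gr}(k_r,n^2)$. So in each part it suffices to exhibit a natural structured subspace of the prescribed dimension for which the corresponding product map is already known to be dominant.

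Observing the dimensions, one sees immediately which structured subspace to pick in each case: $2n-1$ is the dimension of the $n\times n$ Toeplitz matrices, $\binom{n+1}{2}$ is the dimension of the upper (or lower) triangular matrices, $3n-2$ is the dimension of the tridiagonal matrices $D_2$, and $\binom{2n}{2}$ is the dimension of the skew-symmetric matrices $\Lambda_{2n}$. For (i), taking each $W_i$ to be the space of Toeplitz matrices, Proposition \ref{prop:toeplitz} gives dominance of $\rho_r$ for $r \ge \lfloor n/2\rfloor + 1$. For (ii), taking $W_1$ lower triangular and $W_2$ upper triangular, the generic LU decomposition in Corollary \ref{cor:triangular} gives dominance of $\rho_2$.

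For (iii), taking each $W_i = D_2$, Proposition \ref{prop:bidiagonal} gives dominance of $\rho_{2n}$, since a generic matrix is a product of $2n$ tridiagonal matrices. For (iv), taking each $W_i = \Lambda_{2n}$, Proposition \ref{prop:skewsymmetric} (applied to the even integer $2n$) gives dominance of $\rho_r$ for exactly the ranges of $r$ stated: $r \ge 3$ when $2n \ge 8$ (i.e.\ $n \ge 4$), $r \ge 4$ when $2n = 6$ (i.e.\ $n=3$), and $r \ge 5$ when $2n = 4$ (i.e.\ $n=2$). In each of the four cases, Lemma \ref{lem:generic} then promotes this dominance to dominance for \emph{generic} subspaces $W'_i$ of the same dimension, which by definition means that a generic $n\times n$ matrix is a product of elements in $W'_1,\dots,W'_r$.

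Because every part reduces to a previously established decomposition theorem via the same reduction, the proof is essentially bookkeeping. The only point that requires a modicum of care is matching the numerical thresholds on $r$ in part (iv) with those coming from Proposition \ref{prop:skewsymmetric} (where the relevant dimension is the even integer $2n$, not $n$); this accounts for the three separate sub-cases $n\ge 4$, $n=3$, $n=2$. No new differential computation is needed, since the hard work of producing a point at which $d\rho_r$ has full rank has already been carried out in Propositions \ref{prop:toeplitz}, \ref{prop:bidiagonal}, \ref{prop:skewsymmetric}, and Corollary \ref{cor:triangular}.
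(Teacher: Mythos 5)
Your proof is correct and matches the paper's own argument exactly: in each case the paper also identifies the structured subspace of the indicated dimension (Toeplitz, triangular, $D_2$, $\Lambda_{2n}$), cites the corresponding dominance result (Proposition \ref{prop:toeplitz}, Corollary \ref{cor:triangular}, Proposition \ref{prop:bidiagonal}, Proposition \ref{prop:skewsymmetric}), and invokes Lemma \ref{lem:generic} to pass from that one subspace to a generic one of the same dimension. Your explicit matching of the thresholds in part (iv) (replacing $n$ by $2n$ in Proposition \ref{prop:skewsymmetric}) is exactly the intended reading.
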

\begin{proof}
The first statement follows from Proposition~\ref{prop:toeplitz} and Lemma~\ref{lem:generic}.
The second statement follows from Corollary~\ref{cor:triangular} and Lemma~\ref{lem:generic}. The third statement follows from Proposition \ref{prop:bidiagonal} and Lemma \ref{lem:generic}. The last statement follows from Proposition \ref{prop:skewsymmetric} and Lemma Lemma \ref{lem:generic}.
\end{proof}

The combination of Proposition \ref{prop:surjective} and Proposition \ref{prop:generic} implies 
\begin{theorem}\label{thm:generic}
Let $\mathbb{C}^{n\times n}$ be the space of all $n\times n$ matrices then

\begin{enumerate}
[\upshape (i)]

\item For generic $(2n-1)$ dimensional subspaces $W_1,\dots, W_r$ of $\mathbb{C}^{n\times
n}$, an $n\times n$ matrix is a product of elements in $W_i,i=1,2\dots,r$ if $r\geq 2n+5$.

\item For generic
$\binom{n+1}{2}$ dimensional subspaces $W_1,\dots, W_r$ of $\mathbb{C}^{n\times n}$, an $n\times n$ matrix is a product of elements in $W_i,i=1,2\dots,r$ if $r\geq 9$.

\item For generic $(3n-2)$ dimensional subspaces $W_1,\dots, W_r$ of $\mathbb{C}^{n\times n}$,
an $n\times n$ matrix is a product of elements in $W_i,i=1,2\dots,r$ if $r\geq 8n+1$.

\item For generic $\binom{2n}{2}$ dimensional subspaces $W_1,\dots, W_r$ of $\mathbb{C}^{2n\times 2n}$,
an $n\times n$ matrix is a product of elements in $W_i,i=1,2\dots,r$ if 
\begin{enumerate}
\item $r\geq 13$ when $n\ge 4$,
\item $r\ge 17$ when $n=3$,
\item $r\ge 21$ when $n=2$.
\end{enumerate}
\end{enumerate}
\end{theorem}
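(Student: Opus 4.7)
The plan is to combine Proposition \ref{prop:generic} (which yields dominance of $\rho_{r_0}$ for some threshold $r_0$) with Proposition \ref{prop:surjective} (which upgrades dominance to surjectivity at the cost of inflating the number of factors to $4 r_0 + 1$). The four cases of the theorem correspond to the four cases of Proposition \ref{prop:generic}, so I would handle them in parallel.

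The first step is to verify that the arithmetic matches. In (i), Proposition \ref{prop:generic}(i) gives dominance at $r_0 = \lfloor n/2 \rfloor + 1$; since $4\lfloor n/2 \rfloor + 5 \le 2n + 5$, the bound $r \ge 2n+5$ suffices. In (ii) we get $r_0 = 2$ and $4 r_0 + 1 = 9$. In (iii) we get $r_0 = 2n$ and $4 r_0 + 1 = 8n + 1$. In (iv) the dominance thresholds are $3, 4, 5$ for $n \ge 4$, $n = 3$, $n = 2$, yielding the stated $13, 17, 21$. So in each case the conclusion is just the output of Proposition \ref{prop:surjective} applied to the corresponding dominance statement.

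The second step is the actual invocation of Proposition \ref{prop:surjective}. Since that proposition is stated for a single linear subspace $W$ (with $W_1 = \cdots = W_{r'} = W$) containing all diagonal matrices, I would apply it with $W$ a subspace of the required dimension $k$ chosen generically \emph{inside} the subvariety of $\operatorname{Gr}(k, n^2)$ consisting of subspaces containing the diagonals. Dominance for such $W$ is inherited from the specific dominant configuration provided by Proposition \ref{prop:generic}, via Lemma \ref{lem:generic}: dominance is an open condition on $r$-tuples of subspaces, so a generic choice within any irreducible subfamily of subspaces that contains at least one dominant configuration remains dominant. Once both hypotheses of Proposition \ref{prop:surjective} are satisfied, its proof (open mapping theorem, plus the group-generation argument in $\operatorname{GL}_n$, plus the $PDQ$ factorization) gives exactly the desired surjectivity.

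The main obstacle is the mismatch between the independent-generic reading of the theorem statement and the single-subspace setup of Proposition \ref{prop:surjective}. The cleanest resolution, as above, is to specialize the $W_i$ to a common $W$ in the Grassmannian of $k$-planes containing the diagonals, and to verify via Lemma \ref{lem:generic} that dominance survives this restriction; I would spend the bulk of the write-up on this reduction, since the rest is just citing \ref{prop:generic} and \ref{prop:surjective} case by case.
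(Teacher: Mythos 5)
Your overall approach matches the paper's (one‑line) proof: combine Proposition~\ref{prop:generic} with Proposition~\ref{prop:surjective}, and your arithmetic is correct in all four cases. You also identify a genuine issue that the paper glosses over, namely that Proposition~\ref{prop:surjective} requires $W_1=\cdots=W_r=W$ with $W\supseteq$~(all diagonal matrices), whereas Proposition~\ref{prop:generic} produces an $r$-tuple of \emph{independent} generic subspaces, none of which need contain the diagonals.

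However, your proposed fix has an unverified step. You claim that ``dominance for such $W$ is inherited from the specific dominant configuration provided by Proposition~\ref{prop:generic}, via Lemma~\ref{lem:generic}.'' Lemma~\ref{lem:generic} says dominance is an open condition on the \emph{product} of Grassmannians; to get genericity of dominance on the smaller, irreducible family consisting of tuples $(W,\dots,W)$ with $W$ containing the diagonals, you must exhibit at least one witness \emph{inside that family}. Proposition~\ref{prop:generic} does not supply one: its witnesses are the structured examples of Section~\ref{sec:linear spaces}, and for cases (i), (ii), (iv) those witnesses do not lie in your restricted family. In case (i) the Toeplitz space does not contain all diagonal matrices; in case (ii) the witness is the pair $(L,U)$, which is off the diagonal, and the diagonal point $(U,\dots,U)$ is \emph{not} dominant since products of upper triangular matrices stay upper triangular; in case (iv) the skew-symmetric space has zero diagonal. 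Only case (iii), via the bidiagonal space (which does contain all diagonals and is dominant along the diagonal by Proposition~\ref{prop:bidiagonal}), is immediate. So the reduction you say you would ``spend the bulk of the write-up on'' is exactly where the argument is still open, and you would need a separate construction for (i), (ii), (iv). Note also that even if this is patched, what you obtain is a statement about a single generic $W$ in the sub-Grassmannian of $k$-planes containing the diagonals, which is a different (and incomparable) genericity claim from the theorem's ``generic $W_1,\dots,W_r$''; you flag the mismatch but understate that the conclusion changes.
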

We close this section by remarking that Proposition \ref{prop:generic} (resp. Theorem \ref{thm:generic}) only hold for generic subspaces $W_1,\dots, W_r$ of $\mathbb{C}^{n\times n}$, i.e., there is a proper algebraic subvariety $Z_i\subset \operatorname{Gr}(k_i,n^2)$ such that if 
\[
(W_1,\dots,W_r)\in (\operatorname{Gr}(k_1,n^2)-Z_1)\times \cdots \times (\operatorname{Gr}(k_r,n^2)-  Z_r),
\]
then $\rho_r: W_1\times \cdots\times W_r\to \mathbb{C}^2$ is dominant (resp. surjective). However, we do not know any information about algebraic subvarieties $Z_i$. The main contribution of Proposition \ref{prop:generic} and Theorem \ref{thm:generic} is that if the matrix decomposition (both dominant and surjective versions) holds for some $W_1,\dots, W_r$ then it also holds for almost all linear subspaces $W'_i$ of dimensions $\dim W_i,i=1,2,\dots, r$, respectively.

\section{Matrix decomposition for nonlinear spaces}\label{sec:nonlinear}
We consider matrix decompositions for non-linear algebraic subvarieties in this section. In \ref{subsec:companion} we discuss the matrix decomposition into the product of companion matrices and in \ref{subsec:generalized Vandermonde} we discuss the matrix decomposition for generalized Vandermonde matrices.

\subsection{Companion decomposition}\label{subsec:companion}
An $n\times n$ companion matrix is a matrix of the form
\[
\begin{bmatrix}
0 & 0 &\cdots & 0 &c_1\\
1 & 0 &\cdots & 0 &c_2\\
\vdots & \vdots & \ddots&\vdots & \vdots\\
0 & 0 &\cdots & 1 & c_n
\end{bmatrix},
\]
where $c_1,\dots, c_n$ are arbitrary complex numbers. We denote $C_n$ by the set of all companion matrices. Then it is clear that $C_n$ is an affine varitey of dimension $n$. 
\begin{proposition}\label{prop:companion}
A generic $n\times n$ matrix is a product of $n$ companion matrices
\end{proposition}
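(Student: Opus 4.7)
The plan is to construct an explicit rational section of $\rho_n$ on a Zariski-open subset of $\mathbb{C}^{n\times n}$, which immediately yields dominance. The key observation is that a product of $n$ companion matrices has a column structure that lets one solve $A = C^{(1)} \cdots C^{(n)}$ column-by-column for the parameter vectors $c^{(k)} \in \mathbb{C}^n$ of each companion matrix $C^{(k)}$. The paper's remark that an algorithm exists for this decomposition points to exactly this approach, and sidesteps any direct differential computation.

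Setting $M_k := C^{(1)} C^{(2)} \cdots C^{(k)}$ with $M_0 := I_n$, the defining action (the $i$-th column of $C^{(k)}$ equals $e_{i+1}$ for $i<n$ and the $n$-th column equals $c^{(k)}$) yields, by a bookkeeping induction on $k$, that the first $n-k$ columns of $M_k$ are $e_{k+1},\ldots,e_n$ and the last $k$ columns are $c^{(1)}, M_1 c^{(2)}, \ldots, M_{k-1} c^{(k)}$. Specializing to $k=n$ gives
\[
M_n = \bigl[\, c^{(1)} \mid M_1 c^{(2)} \mid M_2 c^{(3)} \mid \cdots \mid M_{n-1} c^{(n)} \,\bigr].
\]
For $A = [a_1 \mid \cdots \mid a_n]$, the equation $M_n = A$ is therefore a lower-triangular system solved recursively by $c^{(1)} = a_1$ and $c^{(k)} = M_{k-1}^{-1} a_k$ for $k \geq 2$. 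A one-line cofactor expansion along the first row yields $\det C^{(k)} = (-1)^{n+1} c_1^{(k)}$, so each $M_{k-1}$ is invertible precisely when $c_1^{(j)} \neq 0$ for every $j < k$, and these non-vanishing conditions cut out a Zariski-open subset $U \subset \mathbb{C}^{n\times n}$ on which the recursion is well-defined and produces a rational section of $\rho_n$.

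To confirm $U$ is nonempty I would evaluate the procedure at $A = I_n$: here $c^{(1)} = e_1$, the matrix $C^{(1)}$ is the cyclic permutation sending $e_i \mapsto e_{i+1}$ for $i<n$ and $e_n \mapsto e_1$, so $(C^{(1)})^{-1} e_i = e_{i-1}$ for $i>1$ and $(C^{(1)})^{-1} e_1 = e_n$. An easy induction then shows $c^{(k)} = e_1$ for every $k$, so $M_n = (C^{(1)})^n = I_n$, and all $c_1^{(k)} = 1 \neq 0$. Hence $I_n \in U$, so $U$ is a nonempty Zariski-open subset contained in the image of $\rho_n$, forcing $\rho_n$ to be dominant. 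The main obstacle is the inductive column identity for $M_k$; once this is in hand, the rest of the argument is formal, and the identity matrix provides a clean witness that avoids any ad hoc non-vanishing verification.
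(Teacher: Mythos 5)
Your proof is correct, but it follows a genuinely different route from the paper's own proof of this proposition. The paper establishes dominance by calculating the differential $d\rho_n$ at the single point $(\sigma,\dots,\sigma)$, where $\sigma$ is the cyclic permutation matrix, observing that $\sigma^{i-1}Y_i\sigma^{n-i}$ supports only the $i$-th column, and then invoking Proposition~\ref{prop:dominant}. You instead build an explicit rational section by proving the column identity for $M_k = C^{(1)}\cdots C^{(k)}$ (which I have checked: the shift-by-one action $C^{(k)}e_j = e_{j+1}$, $C^{(k)}e_n = c^{(k)}$ makes the induction go through cleanly) and then solving $c^{(1)} = a_1$, $c^{(k)} = M_{k-1}^{-1}a_k$ recursively, with $\det C^{(j)} = (-1)^{n+1}c_1^{(j)}$ giving the open locus on which the recursion is well-defined; the identity matrix witnesses non-emptiness. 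In fact your argument is essentially the one the paper later gives for Theorem~\ref{thm:companion1} and Algorithm~\ref{algorithm:companion}, and the paper itself remarks there that this ``gives another way to show that a generic $n\times n$ matrix is a product of $n$ companion matrices.'' The differential approach the paper uses here is a shorter calculation when only dominance is wanted; your constructive approach buys more, yielding the uniqueness of the decomposition and the algorithm for free, and also gives a direct characterization of part of the generic locus (the recursively defined nonvanishing conditions on the $c_1^{(j)}(A)$) rather than the implicit one furnished by the differential criterion.
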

\begin{proof}
We need to prove that the map 
\[
\rho_n: \underset{n\text{ copies }}{\underbrace{C_n\times \cdots \times C_n}}\to \mathbb{C}^{n\times n}
\]
is dominant. Let $\sigma$ be the matrix corresponding to the permutation $(12\dots n)$, i.e., $\sigma$ is the matrix
\[
\begin{bmatrix}
0 & 0 &\cdots &0 & 1\\
1 & 0 &\cdots &0 & 0\\
\vdots & \vdots & \ddots &\vdots & \vdots \\
0 & 0 & \cdots & 1 & 0
\end{bmatrix}.
\]
For an $n\times n$ matrix $A$ the matrix $\sigma A$ is obtained by shifting the $i$-th row of $A$ to the $(i+1)$-th row, $i=1,\dots, n$. Similarly, the matrix $A\sigma$ is obtained by shifting the $i$-th column of $A$ to the $(i-1)$-th column, $i=1,\dots, n$. Here we adopt the convention that the $n+1$-th row is actually the first row and the $0$-th column is actually the $n$-th column.

We calculate the rank of $d\rho_n$ at the point $(\sigma,\cdots, \sigma)$. First notice that the tangent space $T_{\sigma}C_n$ of $C_n$ at $\sigma$ is the linear space consisting of matrices of the form
\[
\begin{bmatrix}
0 & 0 & \cdots & c_1\\
0 & 0 & \cdots & c_2\\
\vdots & \vdots & \ddots & \vdots\\
0 & 0 & \cdots & c_n\\
\end{bmatrix},\]
where $c_1,\dots, c_n$ are arbitrary complex numbers. Let $Y_1,\cdots, Y_n$ be $n$ elements of $T_{\sigma}C_n$ then by formula \eqref{eqn:differential} we have
\[
d\rho_{|(\sigma,\cdots,\sigma)}(Y_1,\cdots,Y_n)=\sum_{i=1}^n \sigma^{i-1} Y_i \sigma^{n-i}.
\]
Since $\sigma$ corresponds to $(12\cdots n)$, it is easy to see that $\sigma^{i-1}Y_i\sigma^{n-i}$ is a matrix with zero entries everywhere except the $i$-th column. On the other hand, $Y_i$'s are independent from each other, this suffices to show that the rank of $d\rho_{|(\sigma,\cdots,\sigma)}$ is $n^2$. 
\end{proof}

Proposition \ref{prop:surjective} together with Proposition \ref{prop:companion} imply 
\begin{theorem}\label{thm:companion}
Every $n\times n$ invertible matrix is a product of $2n$ companion matrices. Every $n\times n$ matrix is a prodcut of $4n$ companion matrices and a diagonal matrix.
\end{theorem}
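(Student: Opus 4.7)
The plan is to adapt the strategy behind Proposition \ref{prop:surjective} to the affine variety $C_n$. A direct citation is unavailable because $C_n$ is not a linear subspace and does not contain the diagonal matrices, but the two engines driving Proposition \ref{prop:surjective}, namely the open mapping theorem and the topological-group argument, carry over with only a small adjustment.

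First, I would apply Proposition \ref{prop:companion} to see that $\rho_n : C_n^n \to \mathbb{C}^{n\times n}$ is dominant, and then invoke Theorem \ref{thm:open mapping} to extract an open dense subset $V \subseteq \mathbb{C}^{n\times n}$ with $V \subseteq \rho_n(C_n^n)$. Intersecting with the open dense subset $\operatorname{GL}_n(\mathbb{C}) \subseteq \mathbb{C}^{n\times n}$ produces an open dense subset $U \coloneqq V \cap \operatorname{GL}_n(\mathbb{C})$ of the topological group $\operatorname{GL}_n(\mathbb{C})$, still contained in $\rho_n(C_n^n)$.

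To invoke Proposition \ref{prop:group generation}, I would verify $I_n \in \rho_n(C_n^n)$. The cyclic shift matrix $\sigma$ from the proof of Proposition \ref{prop:companion} is itself a companion matrix (with $c_1 = 1$ and $c_2 = \cdots = c_n = 0$), and $\sigma^n = I_n$, so $I_n$ is already a product of $n$ companion matrices. Consequently Proposition \ref{prop:group generation} applies to $U$ and yields
\[
\operatorname{GL}_n(\mathbb{C}) \subseteq U \cdot U \subseteq \rho_n(C_n^n) \cdot \rho_n(C_n^n) = \rho_{2n}(C_n^{2n}),
\]
which is the first assertion.

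For the second assertion, any $A \in \mathbb{C}^{n\times n}$ admits a factorization $A = P D Q$ with $P, Q \in \operatorname{GL}_n(\mathbb{C})$ and $D$ diagonal, obtained by reducing $A$ to its rank-normal form via elementary row and column operations. Writing $P$ and $Q$ each as a product of $2n$ companion matrices by the first statement then expresses $A$ as a product of $4n$ companion matrices interleaved with a single diagonal factor. The only genuine obstacle is bookkeeping: since $C_n$ is not closed under scalar translation and does not contain the diagonal matrices, $D$ cannot be absorbed into a companion factor, which is precisely why the theorem retains the explicit diagonal matrix in the decomposition for general $A$.
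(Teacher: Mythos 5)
Your proposal is correct and is in spirit the paper's own argument, but it is more careful than the paper's one-line proof: Proposition~\ref{prop:surjective} as stated requires $W$ to be a linear subspace containing all diagonal matrices, and $C_n$ is an affine (not linear) variety that contains no diagonal matrix at all, so the paper's direct citation is formally inapplicable and what is really being invoked is the argument inside its proof, which you reconstruct correctly. You also add the useful observation that $\sigma$ is itself a companion matrix with $\sigma^n = I_n$, which the paper leaves implicit.

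One step deserves tightening. Proposition~\ref{prop:group generation}, as stated, asks for the identity to lie in the open dense set $U$, yet you only verify $I_n \in \rho_n(C_n^n)$, and the $U$ handed to you by Theorem~\ref{thm:open mapping} is not under your control, so a priori $I_n$ could fall outside it. Two ways to close this: (a) note that the proof of Proposition~\ref{prop:companion} computes $d\rho_n$ at $(\sigma,\dots,\sigma)$ and shows it has full rank $n^2$, and since $\rho_n(\sigma,\dots,\sigma)=\sigma^n=I_n$, the point $I_n$ lies in the Euclidean interior of $\rho_n(C_n^n)$; you may therefore enlarge $U$ by an open neighborhood of $I_n$ contained in the image, after which the identity hypothesis holds; or (b) observe that the identity hypothesis in Proposition~\ref{prop:group generation} is actually superfluous, since for any $g\in\operatorname{GL}_n(\mathbb{C})$ the sets $U$ and $gU^{-1}$ are both open and dense, hence intersect, giving $g\in U\cdot U$ directly. (The paper's own proof of Proposition~\ref{prop:surjective} has the same unaddressed gap, so you are in good company.) The second half of your argument, factoring $A = PDQ$ and applying the first statement to $P$ and $Q$, is correct and matches what the paper must implicitly intend.
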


Since the map 
\[
\rho_n: \underset{n\text{ copies }}{\underbrace{C_n\times \cdots \times C_n}}\to \mathbb{C}^{n\times n}
\]
is dominant, by Proposition \ref{prop:dimension counting} we see that for a generic matrix $A\in \mathbb{C}^{n\times n}$, the fiber $\rho_n^{-1}(A)$ is of dimension zero and hence $\rho_n^{-1}(A)$ is a finite set. In fact, we can prove more.

\begin{theorem}\label{thm:companion1}
The decomposition of a generic $n\times n$ matrix into the product of $n$ companion matrices is unique, i.e., for a generic $n\times n$ matrix $A$, if
\[
A = C_1\cdots C_n = C'_1 \cdots C'_n,
\] 
where $C_i,C'_i,i=1,2,\dots, n$ are companion matrices then $C_i = C'_i $ for all $i=1,\dots, n$.
\end{theorem}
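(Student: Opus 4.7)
The plan is to show that, given any factorization $A = C_1 \cdots C_n$ into companion matrices, the columns of $A$ directly determine the tuple $(C_1,\dots,C_n)$ via a triangular system that can be solved on a Zariski-open subset of $\mathbb{C}^{n\times n}$.

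First I would record the column-shift lemma. Writing a companion matrix as $C = [e_2 \mid e_3 \mid \cdots \mid e_n \mid c]$ for some $c \in \mathbb{C}^n$, and letting $B$ be an arbitrary $n \times n$ matrix with columns $b_1,\dots,b_n$, one has
\[
BC = \begin{bmatrix} b_2 & b_3 & \cdots & b_n & Bc \end{bmatrix}.
\]
Setting $A_k \coloneqq C_1 \cdots C_k$ (with $A_0 = I_n$) and writing $c_i$ for the last column of $C_i$, iterated application of this identity shows that the $i$-th column of $A = A_n$ equals the $n$-th column of $A_i$, which in turn equals $A_{i-1} c_i$. Letting $a_i$ denote the $i$-th column of $A$, we obtain the triangular system
\[
a_i = A_{i-1}\, c_i, \qquad i=1,\dots,n.
\]

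Second, I would solve this system recursively. For $i=1$ we read off $c_1 = a_1$, which determines $C_1$. For $i \ge 2$, provided $A_{i-1}$ is invertible, we must take $c_i = A_{i-1}^{-1} a_i$, and this determines $C_i$. Expansion along the first row gives $\det C_j = (-1)^{n+1}(c_j)_1$, so invertibility of $A_{i-1} = C_1 \cdots C_{i-1}$ is equivalent to $(c_j)_1 \neq 0$ for all $j < i$. Each $(c_j)_1$ is a rational function in the entries of $A$, so the locus $U \subset \mathbb{C}^{n\times n}$ where $(c_1)_1,\dots,(c_{n-1})_1$ are all non-zero is Zariski-open; on $U$ the reconstruction is well-defined and shows that every decomposition $A = C_1 \cdots C_n$ into companion matrices must be the one produced by the recursion.

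Finally I would confirm that $U$ is non-empty and meets the image of $\rho_n$. Non-emptiness is immediate: at $A = I_n$ the decomposition $I_n = \sigma^n$ yields $(c_j)_1 = 1$ for every $j$, so $I_n \in U$. Combined with the dominance of $\rho_n$ from Proposition \ref{prop:companion}, the intersection of $U$ with the image of $\rho_n$ is a Zariski-open dense subset of $\mathbb{C}^{n\times n}$ on which both existence and uniqueness of the decomposition hold. The algebraic content of the argument is a direct consequence of the column-shift identity; the main (mild) obstacle is the genericity bookkeeping needed to certify that $U$ really is a non-empty Zariski-open subset and that it intersects the image of $\rho_n$ in a dense open set.
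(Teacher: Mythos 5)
Your proof is correct and rests on essentially the same idea as the paper's: both derive the triangular system $a_i = A_{i-1}c_i$ (the paper writes it entrywise as equations \eqref{eqn:companion}) and solve it recursively to pin down the $C_i$ uniquely. Your route via the column-shift identity $BC = [b_2\mid\cdots\mid b_n\mid Bc]$ is a cleaner way to obtain that system than the paper's direct entrywise computation of $X^k_{p,q}$, and your explicit observation that invertibility of $A_{i-1}$ reduces to $(c_j)_1 \neq 0$ for $j<i$ makes the genericity bookkeeping more transparent than in the paper, which simply asserts uniqueness without isolating the open condition.
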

\begin{proof}
We consider $n$ companion matrices $C_1,\dots, C_n$ and write 
\[
C_i \coloneqq \begin{bmatrix}
0 & 0 &\cdots &0 & c_{i,1}\\
1 & 0 &\cdots &0 & c_{i,2}\\
\vdots & \vdots & \ddots &\vdots & \vdots \\
0 & 0 & \cdots & 1 & c_{i,n}
\end{bmatrix}
\]
and calculate the product 
\[
X^{k} = C_1\dots C_k.
\]
We claim that the $(p,q)$-th entry $X^k_{p,q}$ of $X^k$ is a polynomial in $c_{ij}$ where $1\le i \le n$ and $1\le j \le q-1$ and it is of the form
\[
X^k_{p,q}= \begin{cases}
\sum_{j=1}^{q+k-n-1} X^k_{p,q-j} c_{q+k-n,n-j+1} + c_{q+k-n, n+1+p-q-k}, &~\text{if} q\ge n-k+1\\
1,&~\text{if}~p-q=k~\text{and}~q< n-k+1\\
0,&~\text{otherwise}.
\end{cases}
\]
If $k=n$ then 
\[
X^n_{p,q}=
\sum_{j=1}^{q-1} X^n_{p,q-j} c_{q,n-j+1} + c_{q, p-q+1}.
\]
Now given a generic $n\times n$ matrix $A = (a_{ij})$, we can find $c_{ij},1\le i,j\le n$ such that 
\begin{equation}
\begin{aligned}
a_{p,1} & = c_{1,p},\\
a_{p,2} & = a_{p,1}c_{2,n} + c_{2,p-1},\\
a_{p,3} & = a_{p,2}c_{3,n} + a_{p,1}c_{3,n-1} + c_{3,p-2},\\
& \vdots \\
a_{p,n} & = a_{p,n-1}c_{n,n} + a_{p,n-2}c_{n,n-1} + \cdots + a_{p,1}c_{n,2} + c_{n,p-n+1}, 
\end{aligned}
\label{eqn:companion}
\end{equation}
and those $c_{ij}$'s are uniquely determined by \eqref{eqn:companion}. Those $C_i$ where 
\[
C_i \coloneqq \begin{bmatrix}
0 & 0 &\cdots &0 & c_{i,1}\\
1 & 0 &\cdots &0 & c_{i,2}\\
\vdots & \vdots & \ddots &\vdots & \vdots \\
0 & 0 & \cdots & 1 & c_{i,n}
\end{bmatrix}
\]
are the desired companion matrices such that 
\[
A = C_1 \cdots C_n.
\]
\end{proof}
The proof of Theorem \ref{thm:companion1} actually gives another way to show that a generic $n\times n$ matrix is a product of $n$ companion matrices. Moreover, it also gives an algorithm to decompose a generic $n\times n$ matrix into the product of $n$ companion matrices. In Algorithm \ref{algorithm:companion}, the input is an $n\times n$ matrix $A = (a_{ij})$ with entries $a_{ij},1\le i,j\le n$ and the out put is a sequence of $n$ companion matrices $C_1,\dots, C_n$ such that $A = C_1\cdots C_n$ if such a decomposition exists and is unique.
\begin{algorithm}
  \caption{Companion matrix decomposition}
    \label{algorithm:companion}
  \begin{algorithmic}[1]
  \For{$p,q=1,2,\dots, n$}
  \State Solve the linear system 
  \[a_{p,q}=\sum_{j=1}^{q-1} a_{p,q-j} c_{q,n-j+1} + c_{q, p-q+1}\]
  for $c_{q,1},\dots, c_{q,n}$. Here we adopt the convention that $a_{i,j}, c_{i,j}= 0$ if either $i \le 0$ or $j\le 0$.
  \State If the solution does not exist or is not unique, the decomposition of $A$ does not exist or is not unique. Stop the algorithm. Otherwise, define a matrix
  \[
C_q \coloneqq \begin{bmatrix}
0 & 0 &\cdots &0 & c_{q,1}\\
1 & 0 &\cdots &0 & c_{q,2}\\
\vdots & \vdots & \ddots &\vdots & \vdots \\
0 & 0 & \cdots & 1 & c_{q,n}
\end{bmatrix},
  \]
  and continue the algorithm.
  \EndFor
  \end{algorithmic}
\end{algorithm}

Lastly, we remark that it is not true that every $n\times n$ matrix is a product of $n$ companion matrices. Indeed, if we consider a matrix $A = (a_{ij})$ where 
\[
a_{11} = 0, a_{12} = 1, 
\]
then from \eqref{eqn:companion} that we must have 
\[
c_{11} = a_{11} = 0, a_{11}c_{2n} = a_{12} = 1,
\]
which is impossible. Hence the companion matrix decomposition is an example where the map $\rho_r$ is dominant but not surjective, as we have remarked in Section \ref{sec:toy example}.

\subsection{Generalized Vandermond decomposition}\label{subsec:generalized Vandermonde}
Now we consider generalized Vandermonde matrices. First we need to define generalized Vandermonde matrices.
\begin{definition}
Let $s$ be an integer we call a matrix of the form 
\[ 
\left( (x_q)^{s+p-1}\right)=
\begin{bmatrix}
x_1^s & x_2^s & \cdots & x_{n-1}^s & x_n^s\\
x_1^{s+1} & x_2^{s+1} & \cdots & x_{n-1}^{s+1} &x_n^{s+1}\\
\vdots & \vdots &\ddots&\vdots&\vdots\\
x_1^{s+n-1} & x_2^{s+n-1} & \cdots & x_{s+n-1}^{n-1} &  x_n^{s+n-1} 
\end{bmatrix}
\]
a generalized Vandermonde matrix of type $s$. We denote the set of all generalized Vandermonde matrix of type $s$ by $\operatorname{Vand}_s$ and the set of transpose of generalized Vandermonde matrices of type $s$ by $\operatorname{Vand}^\mathsf{T}_s$.
\end{definition}

By the definition, a Vandermonde matrix is a generalized Vandermonde matrix of type $0$. We consider the matrix decomposition for generalized Vandermonde matrices and their transpose.

\begin{proposition}\label{prop:generalized Vandermonde}
Let $s_1,s_2,\dots, s_n$ be $n$ integers such that $s_i\not \equiv s_j (\textit{ mod }n )$ if $i\ne j$ and $\displaystyle \sum_{j=1}^n s_j\ne 0$. A generic $n\times n$ matrix is a product of elements in $\operatorname{Vand}_{s_i}$ and $\operatorname{Vand}^{\mathsf{T}}_{s_i},i=1,2,\dots,n$.
\end{proposition}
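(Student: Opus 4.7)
The plan is to apply the strategy of Section~\ref{subsec:strategy} to the multiplication map
\[
\rho_{2n}\colon \operatorname{Vand}_{s_1}\times \operatorname{Vand}^{\mathsf{T}}_{s_1}\times \cdots \times \operatorname{Vand}_{s_n}\times \operatorname{Vand}^{\mathsf{T}}_{s_n}\longrightarrow \mathbb{C}^{n\times n},
\]
and, via Proposition~\ref{prop:dominant}, reduce the claim to exhibiting one point $a$ of the domain at which $d\rho_{2n}|_a$ has maximal rank $n^2$. Each $\operatorname{Vand}_{s_i}$ and each $\operatorname{Vand}^{\mathsf{T}}_{s_i}$ is parameterised by an $n$-tuple $(x_1,\ldots,x_n)\in\mathbb{C}^n$, so the domain has dimension $2n^2$ and Corollary~\ref{cor:dimension counting 1} poses no obstruction.

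I would take the base point $a$ to be the one at which every factor is evaluated at the common tuple $x_q=\omega^{q-1}$, where $\omega=e^{2\pi i/n}$ is a primitive $n$-th root of unity. Denoting by $V_{s_i}\in\operatorname{Vand}_{s_i}$ and $V^{\mathsf{T}}_{s_i}\in\operatorname{Vand}^{\mathsf{T}}_{s_i}$ the corresponding matrices, a short computation yields the clean factorisations
\[
V_{s_i}=F\Delta_{s_i}, \qquad V^{\mathsf{T}}_{s_i}=\Delta_{s_i}F,
\]
where $F$ is the DFT matrix $F[p,q]=\omega^{(p-1)(q-1)}$ and $\Delta_s=\operatorname{diag}\bigl(\omega^{s(q-1)}\bigr)_{q=1}^n$. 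The hypothesis $s_i\not\equiv s_j\pmod n$ is precisely what forces the $\Delta_{s_i}$ to be pairwise distinct; in particular all $V_{s_i}$ and $V^{\mathsf{T}}_{s_i}$ are invertible and $M\coloneqq V_{s_1}V^{\mathsf{T}}_{s_1}\cdots V_{s_n}V^{\mathsf{T}}_{s_n}\in\operatorname{GL}_n(\mathbb{C})$. Differentiating column-wise one obtains
\[
T_{V_{s_i}}\operatorname{Vand}_{s_i}=\bigl\{E_{s_i}F\widetilde D:\widetilde D\text{ diagonal}\bigr\},\quad T_{V^{\mathsf{T}}_{s_i}}\operatorname{Vand}^{\mathsf{T}}_{s_i}=\bigl\{\widetilde DFE_{s_i}:\widetilde D\text{ diagonal}\bigr\},
\]
where $E_s\coloneqq\operatorname{diag}(s+p-1)_{p=1}^n$. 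Substituting these into formula~\eqref{eqn:differential} and using $V_{s_i}V^{\mathsf{T}}_{s_i}=F\Delta_{2s_i}F$ together with the identity $F^2=n\Pi$ for a fixed involutive permutation $\Pi$, the image of $d\rho_{2n}|_a$ becomes a sum of $2n$ explicit $n$-dimensional subspaces built from $F^{\pm 1}$, the diagonals $\Delta_{s_j}$, and one-sided multiplication by $E_{s_k}$.

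The main obstacle is the linear-algebraic verification that these $2n$ subspaces span all of $\mathbb{C}^{n\times n}$. After right-multiplying by $M^{-1}$, which is a bijection on $\mathbb{C}^{n\times n}$, each contribution can be recognised as a conjugate of $E_{s_k}\cdot\mathrm{Circ}$ or $\mathrm{Circ}\cdot E_{s_k}$, where $\mathrm{Circ}=\{FDF^{-1}:D\text{ diagonal}\}$ is the algebra of circulant matrices. The distinctness of the $\omega^{s_i}$'s following from $s_i\not\equiv s_j\pmod n$ separates the Fourier modes contributed by distinct indices $i$, while the integer hypothesis $\sum_j s_j\neq 0$ is what rules out a degenerate cancellation in the combined $E_{s_k}$-factors (appearing in the global product) that would otherwise collapse one degree of freedom in the span. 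Once the rank condition is verified, Proposition~\ref{prop:dominant} gives dominance of $\rho_{2n}$ and the proposition follows.
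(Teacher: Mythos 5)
Your setup is a legitimate alternative route, but as written it stops precisely where the real work begins. You correctly identify the base point with nodes at roots of unity, derive the factorizations $V_{s_i}=F\Delta_{s_i}$, $V^{\mathsf{T}}_{s_i}=\Delta_{s_i}F$, and compute the tangent spaces $T_{V_{s_i}}\operatorname{Vand}_{s_i}=\{E_{s_i}F\widetilde D\}$ and $T_{V^{\mathsf{T}}_{s_i}}\operatorname{Vand}^{\mathsf{T}}_{s_i}=\{\widetilde D F E_{s_i}\}$ (the residual diagonal $\Delta_{s_i-1}$ is absorbed into $\widetilde D$, so the formulas are indeed correct as sets). But the statement ``the distinctness of the $\omega^{s_i}$'s separates the Fourier modes, while $\sum_j s_j\neq 0$ rules out a degenerate cancellation'' is not an argument — it is a description of what the answer should look like. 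The $2n$ contributing subspaces are $L_i\bigl(E_{s_i}\mathrm{Circ}\bigr)L_i^{-1}$ and $L_i'\bigl(\mathrm{Circ}\cdot E_{s_i}\bigr)(L_i')^{-1}$ with accumulated products $L_i, L_i'$ that vary with $i$, so showing that they jointly span $\mathbb{C}^{n\times n}$ requires an explicit rank computation, and you do not supply one. Since verifying this rank condition \emph{is} the proposition, the proof has a genuine gap.

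By contrast, the paper sidesteps the unwieldy $2n$-factor differential. It first fixes in each pair the matrix $A_i=(w^{-q(p-1+s_i)})\in\operatorname{Vand}_{s_i}$ and restricts to the $n$-dimensional subvariety $W_i=\{B_iA_i: B_i\in\operatorname{Vand}^{\mathsf{T}}_{s_i}\}$, chosen so that $\operatorname{Id}\in W_i$. The differential of the resulting $n$-factor map at $(\operatorname{Id},\dots,\operatorname{Id})$ is then simply $\sum_i X_i$, and the $n^2\times n^2$ Jacobian becomes block-diagonal with $n$ blocks $M_p$ of size $n\times n$. Each $\det(M_p)$ is evaluated in closed form via Vandermonde and symmetric-function identities, and it is in that determinant calculation that $s_i\not\equiv s_j\ (\mathrm{mod}\ n)$ and $\sum_j s_j\neq 0$ enter. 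If you want to complete your version, you would need to carry out an analogous explicit rank verification for the sum of $2n$ conjugated subspaces; otherwise, adopting the paper's restriction to $W_1\times\cdots\times W_n$ evaluated at the identity is the simpler path.
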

\begin{proof}
Again, we need to prove that the map 
\[
\rho_{2n}:\operatorname{Vand}^T_{s_1}\times \operatorname{Vand}_{s_1}\times \cdots \times \operatorname{Vand}^T_{s_n}\times \operatorname{Vand}_{s_n}\to \mathbb{C}^{n\times n}
\]
is dominant. Let $w$ be a primitive $n$-th root of unity. Let 
\[
W_{i}=\{B_i\cdot A_i | B_i\in \operatorname{Vand}_{s_i}^T, A_i=(w^{-q(p-1+s_i)})\in \operatorname{Vand}_{s_i}\}.
\] 
It is clear that $W_i\subset \operatorname{Vand}^T_{s_i}\cdot \operatorname{Vand}_{s_i}$ and that $\operatorname{Id}\in W_i$. Then it suffices to show that 
\[
\rho_{n}:W_1\times \cdots \times W_n\to \mathbb{C}^{n\times n}
\] 
is dominant. For this, we will show that the differential $d\rho_{n}$ at $(\operatorname{Id},\operatorname{Id},\dots,\operatorname{Id})$ is surjective. Consider the differential
\[
d\rho_{n}|_{(\operatorname{Id},\operatorname{Id},\dots,\operatorname{Id})}(X_1,\dots, X_n)=\displaystyle\sum_{i=1}^n X_i,
\]
where $X_i=((q+s_i-1)w^{p(q+s_i-2)}x _{p,i})\cdot (w^{-q(p+s_i-1)})\in T_{\operatorname{Id}}W_i$ and $x_{p,i}$'s are variables. Then a simple calculation shows that 
\[
X_i=\displaystyle (\sum_{k=1}^n (k+s_i-1)w^{(p-q)k)} )w^{p(s_i-2)-q(s_i-1)}x_{p,s_i}.
\]
Let $\xi_{p,q,i}=\displaystyle (\sum_{k=1}^n (k+s_i-1)w^{(p-q)k)} )w^{p(s_i-2)-q(s_i-1)}$. We regard $\mathbb{C}^{n\times n} $ as $\mathbb{C}^{n^2}$ by the linear isomorphism
\[
h:\mathbb{C}^{n\times n}\to \mathbb{C}^{n^2}
\]
defined by 
\[h((x_{i,j}))=(x_{1,1},x_{1,2},\cdots, x_{1,n}, x_{2,1},x_{2,2},\cdots, x_{2,n},\cdots, x_{n,1},x_{n,2},\cdots, x_{n,n}).\]
Let $M$ be the coefficient matrix of $d\rho_{2n}|_{(\operatorname{Id},\operatorname{Id},\dots,\operatorname{Id})}$ then $M$ is an $n^2\times n^2$ matrix and 
\[M=
\begin{bmatrix}
\xi_{1,1,1} & 0 &\cdots& 0 &\xi_{1,1,2}&0 &\cdots &0&\cdots &\xi_{1,1,n} &0 &\cdots &0\\
\xi_{1,2,1} & 0 &\cdots& 0 &\xi_{1,2,2}&0 &\cdots &0&\cdots &\xi_{1,2,n} &0 &\cdots &0\\
\vdots& \vdots & \ddots&\vdots &\vdots &\vdots  &\ddots & \vdots &\ddots&\vdots&\vdots&\ddots&\vdots\\
\xi_{1,n,1} & 0 &\cdots& 0 &\xi_{1,n,2}&0 &\cdots &0&\cdots &\xi_{1,n,n} &0 &\cdots &0\\

0&\xi_{2,1,1}&\cdots& 0 &0&\xi_{2,1,2} &\cdots &0&\cdots &0&\xi_{2,1,n} &\cdots &0\\
0&\xi_{2,2,1} &\cdots &0 &0&\xi_{2,2,2}&\cdots &0&\cdots &0&\xi_{2,2,n} &\cdots &0\\
\vdots&\vdots & \ddots &\vdots &\vdots  &\vdots & \ddots &\vdots&\ddots&\vdots&\vdots&\ddots&\vdots\\
0&\xi_{2,n,1} &\cdots &0 &0&\xi_{2,n,2} &\cdots &0&\cdots &0&\xi_{2,n,n} &\cdots &0\\

\vdots&\vdots & \ddots &\vdots &\vdots  &\vdots & \ddots &\vdots&\ddots&\vdots&\vdots&\ddots&\vdots\\

0 & 0 &\cdots& \xi_{n,1,1} &0&0 &\cdots &\xi_{n,1,2}&\cdots &0 &0 &\cdots &\xi_{n,1,n}\\
0& 0 &\cdots& \xi_{n,2,1}  &0&0 &\cdots &\xi_{n,2,2}&\cdots &0 &0 &\cdots &\xi_{n,2,n}\\
\vdots& \vdots & \ddots&\vdots &\vdots &\vdots  &\ddots & \vdots &\ddots&\vdots&\vdots&\ddots&\vdots\\
0& 0 &\cdots& \xi_{n,n,1}  &0&0 &\cdots &\xi_{n,n,2}&\cdots &0 &0 &\cdots &\xi_{n,n,n}
\end{bmatrix}.
\] Where $d\rho_{2n}|_{(\operatorname{Id},\operatorname{Id},\dots,\operatorname{Id})}(X_1,\dots, X_n)=\displaystyle\sum_{i=1}^n X_i$ can be expressed as 
\[
\begin{bmatrix}
\xi_{1,1,1} & 0 &\cdots& 0 &\xi_{1,1,2}&0 &\cdots &0&\cdots &\xi_{1,1,n} &0 &\cdots &0\\
\xi_{1,2,1} & 0 &\cdots& 0 &\xi_{1,2,2}&0 &\cdots &0&\cdots &\xi_{1,2,n} &0 &\cdots &0\\
\vdots& \vdots & \ddots&\vdots &\vdots &\vdots  &\ddots & \vdots &\ddots&\vdots&\vdots&\ddots&\vdots\\
\xi_{1,n,1} & 0 &\cdots& 0 &\xi_{1,n,2}&0 &\cdots &0&\cdots &\xi_{1,n,n} &0 &\cdots &0\\

0&\xi_{2,1,1}&\cdots& 0 &0&\xi_{2,1,2} &\cdots &0&\cdots &0&\xi_{2,1,n} &\cdots &0\\
0&\xi_{2,2,1} &\cdots &0 &0&\xi_{2,2,2}&\cdots &0&\cdots &0&\xi_{2,2,n} &\cdots &0\\
\vdots&\vdots & \ddots &\vdots &\vdots  &\vdots & \ddots &\vdots&\ddots&\vdots&\vdots&\ddots&\vdots\\
0&\xi_{2,n,1} &\cdots &0 &0&\xi_{2,n,2} &\cdots &0&\cdots &0&\xi_{2,n,n} &\cdots &0\\

\vdots&\vdots & \ddots &\vdots &\vdots  &\vdots & \ddots &\vdots&\ddots&\vdots&\vdots&\ddots&\vdots\\

0 & 0 &\cdots& \xi_{n,1,1} &0&0 &\cdots &\xi_{n,1,2}&\cdots &0 &0 &\cdots &\xi_{n,1,n}\\
0& 0 &\cdots& \xi_{n,2,1}  &0&0 &\cdots &\xi_{n,2,2}&\cdots &0 &0 &\cdots &\xi_{n,2,n}\\
\vdots& \vdots & \ddots&\vdots &\vdots &\vdots  &\ddots & \vdots &\ddots&\vdots&\vdots&\ddots&\vdots\\
0& 0 &\cdots& \xi_{n,n,1}  &0&0 &\cdots &\xi_{n,n,2}&\cdots &0 &0 &\cdots &\xi_{n,n,n}
\end{bmatrix}
\cdot \begin{bmatrix}x_{1,s_1}\\ x_{2,s_1}\\ \vdots\\ x_{n,s_1}\\ x_{1,s_2}\\ x_{2,s_2}\\ \cdots \\x_{n,s_2}\\ \vdots \\  x_{1,s_n}\\ x_{2,s_n}\\ \vdots\\ x_{n,s_n} \end{bmatrix}.
\]
Now we will prove that $M$ is of the full rank $n^2$. Since by permutation of columns, $M$ becomes a block diagonal matrix where blocks $M_p$ on the diagonal are 
\[M_p=
\begin{bmatrix}
\xi_{p,1,1}& \xi_{p,1,2}&\cdots &\xi_{p,1,n}\\
\xi_{p,2,1}& \xi_{p,2,2}&\cdots &\xi_{p,2,n}\\
\vdots &\vdots &\ddots&\vdots\\
\xi_{p,n,1}& \xi_{p,n,2}&\cdots &\xi_{p,n,n}
\end{bmatrix}.
\]
Therefore, it suffices to prove that $M_p$ is of rank $n$ for each $p=1,2,\dots, n$. Note that we have a formulas
\begin{align*}
\displaystyle \sum_{k=1}^n k w^{k}&=\frac{-nw}{1-w},\\
\displaystyle \sum_{k=1}^n w^k&=0.
\end{align*}

for any $n$-th root of unity $w\ne 1$. Hence 
\[
\xi_{p,i,j}=\begin{cases}
-\frac{nw^{p-i}}{1-w^{p-i}} w^{(p-i)s_j-2p+i},&\textit{ if }p\ne i\\
\frac{(2s_j+n-1)nw^{-p}}{2},&\textit{ if }p=i.
\end{cases}
\]
Then it is easy to see that the rank of $M_p$ is the same as the matrix $\tilde{M_p}=(\tilde{\xi_{p,i,j}})$, where 
\[
\tilde{\xi}_{p,i,j}=\begin{cases}
w^{-is_j},&\textit{ if }i\ne p\\
s_jw^{-ps_j},&\textit{ if }i=p.
\end{cases}
\]
Up to a permutation of rows, $\tilde{M_p}$ is of the form  
\[
\begin{bmatrix}
1 & 1 & \cdots & 1\\
w^{-s_1} & w^{-s_2}&\cdots & w^{-s_n}\\
\vdots & \vdots &\ddots & \vdots\\
w^{-(p-1)s_1} & w^{-(p-1)s_2}&\cdots & w^{-(p-1)s_n}\\
\alpha_1w^{-ps_1} & \alpha_2w^{-ps_2}&\cdots & \alpha_nw^{-ps_n}\\
w^{-(p+1)s_1} & w^{-(p+1)s_2}&\cdots & w^{-(p+1)s_n}\\
\vdots & \vdots &\ddots & \vdots\\
w^{-(n-1)s_1} & w^{-(n-1)s_2}&\cdots & w^{-(n-1)s_n}
\end{bmatrix},
\] 
where $\alpha_1,\cdots, \alpha_n$ are fixed integers. We still denote this matrix by $\tilde{M_p}$ where $p=0,1,\cdots, n-1$. We will compute the determinant of $\tilde{M_p}$. We have 
\[
\operatorname{Det}(\tilde{M_p})=\displaystyle \sum_{j=1}^n (-1)^{p+j-1}\alpha_j w^{-ps_j}V(w^{-s_j})S_{j,n-1-p}.
\]
Where $V(w^{-s_j})$ is the determinant of the Vandermonde matrix determined by $w^{-s_1}$, $\cdots$, $w^{-s_{j-1}}$, $w^{-s_{j+1}}$, $\cdots, w^{-s_n}$ and $S_{j,n-1-p}$ is the $(n-1-p)$-th symmetric function on $w^{-s_1}$, $\cdots$, $w^{-s_{j-1}}$, $w^{-s_{j+1}}$, $\cdots$, $w^{-s_n}$. 

Note that 
\[
(t-w^{-s_1})\cdots (t-w^{-s_n})=t^n-1,
\]
so 
\begin{align*}
(t-w^{-s_1})\cdots (t-w^{-s_{j-1}}) (t-w^{-s_{j+1}})\cdots (t-w^{-s_n})&=\frac{t^n-1}{t-w^{-s_j}}\\
&=\displaystyle \sum_{k=0}^{n-1} (w^{-s_j})^{k}t^{n-1-k},
\end{align*}
hence $S_{j,n-1-p}=(-1)^p(w^{-s_j})^{n-1-p}=(w^{-s_j})^{-1-p}$.

On the other hand, we know that 
\[
V(w^{-s_j})=\displaystyle \prod_{a> b, a,b\ne j} (w^{-s_{a}}-w^{-s_{b}})=(-1)^{j-1}\frac{\displaystyle\prod_{a>b} (w^{-s_a}-w^{s_b})}{\displaystyle\prod_{c\ne j} (w^{-s_c}-w^{-s_j})}.
\]
Let $V$ be the determinant of the Vandermonde matrix determined by $w^{-s_1},\cdots, w^{-s_n}$. It is obvious that $V\ne 0$. Also from
\[
\frac{t^n-1}{t-w^{-s_j}}=\displaystyle \prod_{c\ne j} (t-w^{-s_c}),
\]
we have 
\[
\displaystyle\prod_{c\ne j} (w^{-s_c}-w^{-s_j})=\displaystyle \lim_{t\to w^{-s_j}}\frac{t^n-1}{t-w^{-s_j}}=nw^{s_j}.
\]

Therefore 
\begin{align*}
\operatorname{Det}(\tilde{M_p})&=\displaystyle \sum_{j=1}^n (-1)^{p+j-1}\alpha_j w^{-ps_j} (-1)^p (w^{-s_j})^{-1-p}(-1)^{j-1}\frac{\displaystyle\prod_{a>b} (w^{-s_a}-w^{s_b})}{\displaystyle\prod_{c\ne j} (w^{-s_c}-w^{-s_j})}\\
&= V\displaystyle \sum_{j=1}^n \frac{\alpha_j w^{s_j}}{\displaystyle\prod_{c\ne j} (w^{-s_c}-w^{-s_j})}\\
&= V \displaystyle \sum _{j=1}^n\frac{\alpha_jw^{s_j}}{nw^{s_j}}\\
&=\frac{V}{n} \displaystyle \sum_{j=1}^n \alpha_j.
\end{align*}
In particular, we set $\alpha_j=s_j$ then we see that $\operatorname{Det}(\tilde{M_p})\ne 0$ if and only if $\displaystyle \sum_{j=1}s_j\ne 0$. This implies that the map $\rho_{n}$ is dominant for all $s_1,\dots, s_n$ such that $s_i\not\equiv s_j (\textit{ mod }n)$ and $\displaystyle \sum_{j=1}^ns_j\ne 0$.
\end{proof}

From the proof of Proposition \ref{prop:generalized Vandermonde} we have 
\begin{corollary}\label{cor:generalized Vandermonde}
Let $s_1,\dots, s_n$ be as in Proposition \ref{prop:generalized Vandermonde} and let
\[
W_{i}=\{B_i\cdot A_i | B_i\in \operatorname{Vand}_{s_i}^T, A_i=(w^{-q(p-1+s_i)})\}.
\]
A generic $n\times n$ matrix is a product of elements in $W_i,i=1,2,\dots, n$.
\end{corollary}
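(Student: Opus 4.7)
The proof will essentially consist of observing that the corollary has already been established in the course of proving Proposition \ref{prop:generalized Vandermonde}. In that proof, the key step was to pass from the full parameter space $\operatorname{Vand}^{\mathsf{T}}_{s_1}\times \operatorname{Vand}_{s_1}\times \cdots \times \operatorname{Vand}^{\mathsf{T}}_{s_n}\times \operatorname{Vand}_{s_n}$ to the smaller product $W_1\times \cdots \times W_n$, since each $W_i$ sits inside $\operatorname{Vand}_{s_i}^{\mathsf{T}}\cdot \operatorname{Vand}_{s_i}$. Dominance of the restricted map $\rho_n : W_1\times \cdots \times W_n \to \mathbb{C}^{n\times n}$ is precisely the content of the corollary, so the plan is simply to isolate this intermediate conclusion and record it as a stand-alone statement.

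More concretely, the plan is to verify that each $W_i$ contains the identity matrix (take $B_i = A_i^{-1}$, which exists since $(w^{-q(p-1+s_i)})$ is a nonsingular Vandermonde-type matrix) and then invoke Proposition \ref{prop:dominant} using the computation already carried out. That computation expressed $d\rho_n|_{(\operatorname{Id},\dots,\operatorname{Id})}$ via formula \eqref{eqn:differential} as a sum $\sum_i X_i$ with $X_i$ running over a parametrized family in $T_{\operatorname{Id}}W_i$, and it then identified the coefficient matrix $M$, which after a column permutation is block-diagonal with blocks $M_p$. The determinant identity
\[
\det(\tilde M_p) \;=\; \frac{V}{n}\sum_{j=1}^n s_j
\]
together with the hypotheses $s_i\not\equiv s_j \pmod n$ (guaranteeing $V\neq 0$) and $\sum_j s_j\neq 0$ (guaranteeing the factor is nonzero) shows $d\rho_n|_{(\operatorname{Id},\dots,\operatorname{Id})}$ has full rank $n^2$. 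By Proposition \ref{prop:dominant}, $\rho_n$ is dominant, which is exactly the assertion that a generic $n\times n$ matrix lies in $\rho_n(W_1\times \cdots \times W_n)$.

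The main obstacle here is not really an obstacle, since every nontrivial ingredient is already present in the proof of Proposition \ref{prop:generalized Vandermonde}; the only work is to point out that that proof produces a dominance statement about $W_1\times \cdots \times W_n$ as an intermediate step, rather than only about $\operatorname{Vand}^{\mathsf{T}}_{s_1}\times \operatorname{Vand}_{s_1}\times \cdots \times \operatorname{Vand}^{\mathsf{T}}_{s_n}\times \operatorname{Vand}_{s_n}$. Accordingly, the proof can be written in a few lines as a direct consequence of the calculation above.
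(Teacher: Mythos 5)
Your proposal is correct and takes essentially the same route as the paper: the paper itself prefaces the corollary with ``From the proof of Proposition~\ref{prop:generalized Vandermonde} we have,'' so the intended proof is exactly to observe that dominance of $\rho_n\colon W_1\times\cdots\times W_n\to\mathbb{C}^{n\times n}$ was already established as the key intermediate step. One small caution: your parenthetical ``take $B_i=A_i^{-1}$'' does not quite justify $\operatorname{Id}\in W_i$, since $A_i^{-1}=\tfrac{1}{n}\bigl(w^{p(s_i+q-1)}\bigr)$ carries a scalar factor $\tfrac{1}{n}$ that prevents it from being a genuine transposed generalized Vandermonde matrix (one only gets $nI\in W_i$); this imprecision is inherited from the paper's own unproved assertion and is harmless here, since the differential computation at the chosen base point is unaffected up to a nonzero scalar.
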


Combining Proposition \ref{prop:surjective} and Proposition \ref{prop:generalized Vandermonde} we obtain
\begin{theorem}
Let $s_1,\dots, s_n$ be as in Proposition \ref{prop:generalized Vandermonde}. For every $n\times n$ invertible matrix $M$ there are $A_i,A'_i\in \operatorname{Vand}_{s_i}$ and $B_i,B'_i\in \operatorname{Vand}^{\mathsf{T}}_{s_i}$,$i=1,2,\dots, n$ such that 
\[
M = B_1A_1\cdots B_nA_n B'_1A'_1\cdots B'_nA'_n.
\]
For every $n\times n$ matrix $M$, there are $A_i,A'_i,C_i,C'_i\in \operatorname{Vand}_{s_i}$, $B_i,B'_i,D_i,D'_i\in \operatorname{Vand}^{\mathsf{T}}_{s_i}$,$i=1,2,\dots, n$ and a diagonal matrix $E$ such that 
\[
M = B_1A_1\cdots B_nA_n B'_1A'_1\cdots B'_nA'_n E D_1C_1\cdots D_nC_n D'_1C'_1\cdots D'_nC'_n.
\]
\end{theorem}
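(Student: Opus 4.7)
The plan is to bootstrap the dominance result of Proposition~\ref{prop:generalized Vandermonde} to a surjectivity statement, following verbatim the two-step strategy already deployed in the proof of Proposition~\ref{prop:surjective}: first upgrade ``dominant onto $\mathbb{C}^{n\times n}$'' to ``surjective onto $\operatorname{GL}_n(\mathbb{C})$'' via the group-generation principle, and then reduce the general case to the invertible case by means of the classical $M=PEQ$ factorization.

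Concretely, Proposition~\ref{prop:generalized Vandermonde} asserts that the map
\[
\rho_{2n}\colon \operatorname{Vand}^{\mathsf{T}}_{s_1}\times\operatorname{Vand}_{s_1}\times\cdots\times\operatorname{Vand}^{\mathsf{T}}_{s_n}\times\operatorname{Vand}_{s_n}\longrightarrow\mathbb{C}^{n\times n}
\]
is dominant. By Theorem~\ref{thm:open mapping}, its image therefore contains an open dense subset $V$ of $\mathbb{C}^{n\times n}$. Since $\operatorname{GL}_n(\mathbb{C})$ is itself open and dense in $\mathbb{C}^{n\times n}$, the intersection $V\cap\operatorname{GL}_n(\mathbb{C})$ is open and dense in $\operatorname{GL}_n(\mathbb{C})$. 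Applying Proposition~\ref{prop:group generation} to the topological group $\operatorname{GL}_n(\mathbb{C})$, exactly as in the proof of Proposition~\ref{prop:surjective}, yields
\[
\operatorname{GL}_n(\mathbb{C})\subseteq \rho_{2n}\bigl(\operatorname{Vand}^{\mathsf{T}}_{s_1}\times\cdots\times\operatorname{Vand}_{s_n}\bigr)\cdot \rho_{2n}\bigl(\operatorname{Vand}^{\mathsf{T}}_{s_1}\times\cdots\times\operatorname{Vand}_{s_n}\bigr).
\]
Unwinding this inclusion is precisely the first assertion: every invertible $M$ admits a factorization $M=(B_1A_1\cdots B_nA_n)(B'_1A'_1\cdots B'_nA'_n)$ with $A_i,A'_i\in\operatorname{Vand}_{s_i}$ and $B_i,B'_i\in\operatorname{Vand}^{\mathsf{T}}_{s_i}$.

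For the second assertion, take an arbitrary $M\in\mathbb{C}^{n\times n}$ and write $M=PEQ$ with $P,Q\in\operatorname{GL}_n(\mathbb{C})$ and $E$ diagonal, which is the same classical factorization invoked at the end of the proof of Proposition~\ref{prop:surjective}. Applying the invertible case separately to $P$ and to $Q$ produces factors $A_i,B_i,A'_i,B'_i$ for $P$ and $C_i,D_i,C'_i,D'_i$ for $Q$ in the appropriate $\operatorname{Vand}_{s_i}$ and $\operatorname{Vand}^{\mathsf{T}}_{s_i}$. Substituting these two products into $M=PEQ$, with $E$ sandwiched between them, yields the stated $(8n+1)$-fold decomposition; this matches the general $r\mapsto 4r+1$ pattern of Proposition~\ref{prop:surjective} for $r=2n$.

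The main geometric obstacle---namely, exhibiting a point at which the differential of $\rho_{2n}$ has full rank---has already been overcome in Proposition~\ref{prop:generalized Vandermonde}, where it was reduced to the non-vanishing determinant computation involving roots of unity and the hypothesis $\sum s_j\neq 0$. The present theorem is therefore essentially formal bookkeeping on top of that proposition. The only subtlety worth flagging is the standard topological fact underlying Proposition~\ref{prop:group generation}: any open dense subset $U$ of a topological group $G$ satisfies $G=U\cdot U$, which follows from the continuity of inversion and the observation that for each $g\in G$ the open set $gU^{-1}$ must meet the dense set $U$.
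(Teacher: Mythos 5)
Your proof is correct and follows the same approach the paper intends: the paper simply declares the theorem follows by ``combining Proposition~\ref{prop:surjective} and Proposition~\ref{prop:generalized Vandermonde},'' and you have spelled out exactly the argument behind that citation (open mapping theorem, group generation in $\operatorname{GL}_n(\mathbb{C})$, then $M=PEQ$). One small point worth noting: Proposition~\ref{prop:surjective} as stated requires the factors to come from a single linear subspace containing all diagonal matrices, which the Vandermonde sets are not, so the proposition does not apply verbatim; you have (correctly) re-run its proof in this context rather than invoking it as a black box, which is exactly what is needed.
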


\begin{theorem}
Let $W_i,i=1,2\dots, n$ be as in Corollary \ref{cor:generalized Vandermonde}. For every $n\times n$ invertible matrix $M$ there is an element $A_i$ in $W_i$ for each $i=1,2,\dots, n$ such that 
\[
M = A_1\cdots A_n.
\]
For every $n\times n$ matrix $M$ there are $A_i,B_i\in W_i$ for each $i=1,2,\dots,n$ and a diagonal matrix $C$ such that
\[
M = A_1\cdots A_nCB_1\cdots B_n.
\] 
\end{theorem}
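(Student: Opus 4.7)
The approach is to derive both statements from Corollary \ref{cor:generalized Vandermonde} (which asserts that $\rho_n:W_1\times\cdots\times W_n\to \mathbb{C}^{n\times n}$ is dominant) via the topological-group argument of Proposition \ref{prop:group generation} together with a $PDQ$-type factorization for arbitrary matrices. The overall pattern mirrors the deduction of Theorem \ref{thm:companion} from Proposition \ref{prop:companion}.

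For the invertible case, I would first invoke Theorem \ref{thm:open mapping} on the dominant polynomial map $\rho_n$ to produce a subset $V\subseteq \rho_n(W_1\times\cdots\times W_n)$ that is open and dense in $\mathbb{C}^{n\times n}$. Intersecting with $\operatorname{GL}_n(\mathbb{C})$, which is itself open and dense in $\mathbb{C}^{n\times n}$, gives an open dense subset $U\subseteq\operatorname{GL}_n(\mathbb{C})$ every element of which is a product $A_1\cdots A_n$ with $A_i\in W_i$. The proof of Proposition \ref{prop:generalized Vandermonde} evaluated $\rho_n$ at $(\operatorname{Id}_n,\dots,\operatorname{Id}_n)$, so $\operatorname{Id}_n\in W_i$ for each $i$ and hence $\operatorname{Id}_n\in\rho_n(W_1\times\cdots\times W_n)$. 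Then Proposition \ref{prop:group generation} applied to $G=\operatorname{GL}_n(\mathbb{C})$ and the open dense $U$ gives $\operatorname{GL}_n(\mathbb{C})=U\cdot U$.

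For the moreover part, decompose an arbitrary $M\in\mathbb{C}^{n\times n}$ as $M=PDQ$ with $P,Q\in\operatorname{GL}_n(\mathbb{C})$ and $D$ diagonal (e.g.\ from an LPU or rank-revealing factorization, as is used in the proof of Proposition \ref{prop:surjective}). Applying the invertible case to $P$ and to $Q$ yields $A_i,B_i\in W_i$ with $P=A_1\cdots A_n$ and $Q=B_1\cdots B_n$, and setting $C=D$ produces the desired factorization
\[
M=A_1\cdots A_n\, C\, B_1\cdots B_n.
\]

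The principal obstacle is that the topological-group argument via Proposition \ref{prop:group generation} naturally yields $2n$ factors for invertible matrices (each element of $U\cdot U$ is a product of two $n$-tuples), whereas the statement claims only $n$ factors. Closing this gap requires a sharper result than pure dominance: one must argue that $\rho_n(W_1\times\cdots\times W_n)\cap\operatorname{GL}_n(\mathbb{C})$ is all of $\operatorname{GL}_n(\mathbb{C})$, not merely an open dense subset. I would attempt this by using the fact that $\dim(W_1\times\cdots\times W_n)=n^2=\dim\operatorname{GL}_n(\mathbb{C})$, so $\rho_n$ is generically finite, and then try to rule out missed invertible points either by a fiber-dimension analysis (as in the companion case, cf.\ Theorem \ref{thm:companion1}) or by exhibiting an explicit iterative construction of the $A_i$ in the spirit of Algorithm \ref{algorithm:companion}; exploiting the explicit form $W_i=\operatorname{Vand}_{s_i}^{\mathsf{T}}\cdot A_i$ with a fixed DFT-like $A_i$ should make such a direct parameterization tractable.
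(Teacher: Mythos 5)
Your diagnosis in the final paragraph is on target: the gap you identify is real and is a defect in the theorem statement, not in your argument. Corollary~\ref{cor:generalized Vandermonde} gives only that $\rho_n:W_1\times\cdots\times W_n\to\mathbb{C}^{n\times n}$ is dominant, so the deduction you carry out---Theorem~\ref{thm:open mapping} to obtain an open dense image, then Proposition~\ref{prop:group generation} applied to $\operatorname{GL}_n(\mathbb{C})$---yields $\operatorname{GL}_n(\mathbb{C})\subseteq U\cdot U$, hence a product of \emph{two} elements from each $W_i$ ($2n$ factors), and the $PDQ$ factorization then gives $4n+1$ factors for arbitrary $M$, exactly as in Proposition~\ref{prop:surjective} with $r=n$. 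This matches the pattern in Theorem~\ref{thm:companion} (generic $n$, invertible $2n$, arbitrary $4n+1$) and in the immediately preceding Vandermonde theorem (generic $2n$, invertible $4n$, arbitrary $8n+1$), the last count appearing also in the concluding table. The theorem as printed, however, asks for $n$ and $2n+1$ factors, exactly half what this machinery delivers, and the paper offers no proof or additional argument to support the smaller counts.

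Your proposed repairs would not close the gap. Since $\dim(W_1\times\cdots\times W_n)=n^2=\dim\mathbb{C}^{n\times n}$, the map $\rho_n$ is generically finite, but a generically finite dominant polynomial map need not hit every invertible matrix: the paper's own companion decomposition (Section~\ref{subsec:companion}) is precisely such a map, dominant with generically finite fibers, and yet the paper exhibits an invertible matrix with $a_{11}=0$, $a_{12}=1$ that is not a product of $n$ companion matrices. So a fiber-dimension count or an analogue of Theorem~\ref{thm:companion1} cannot by itself force surjectivity here; one would need a bespoke structural argument, which is absent from both the paper and your sketch. The form of the statement your argument actually proves (and which the paper presumably intended) is: for invertible $M$ there exist $A_i,B_i\in W_i$ with $M=A_1\cdots A_n\,B_1\cdots B_n$, and for arbitrary $M$ there exist $A_i,B_i,C_i,D_i\in W_i$ and a diagonal $E$ with $M=A_1\cdots A_n\,B_1\cdots B_n\,E\,C_1\cdots C_n\,D_1\cdots D_n$.
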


\section{Conclusion}
We discuss the existence of matrix decompositions for bidiagonal, skew symmetric, symmetric Toeplitz, persymmetric Hankel, generic, companion, generalized Vandermonde matrix decompositions, for both generic and arbitrary matrices. 

It is natural to ask, for example, if the number of bidiagonal matrices needed to decompose a generic (resp. arbitrary) matrix is the smallest. For most types of matrix decompositions we discussed in this paper, the number we obtain is already the smallest for a generic matrix. It is still open if the number  we obtain is the smallest for an arbitrary matrix. We summarize our main results in the following table.
\begin{center}
  \begin{tabular}{ l | c | c | c | c | r}
    \hline
    &   $r$ (generic) &  sharpness & $r$ (arbitrary)  & algorithm\\  \hline
    bidiagonal & $2n$ &   unknown &  $8n$   & unknown \\ \hline
    skew symmetric ($n\ge 8$ even) & $3$ &    yes & 13  & unknown\\ \hline
    symmetric Toeplitz & $\lfloor \frac{n+1}{2} \rfloor$ & yes  & unknown & unknown\\ \hline
    companion &  $n$ &  yes & $4n+1$  & yes \\ \hline
    generalized Vandermonde &  $2n$ & unknown & $8n+1$  & unknown\\ \hline
      \end{tabular}
\end{center}

\end{document}